\providecommand{\isSn}{0}
\providecommand{\isBirk}{0}
\newcommand{\ifSn}[2]{\ifthenelse{\equal{\isSn}{1}}{#1}{#2}}
\newcommand{\ifBirk}[2]{\ifthenelse{\equal{\isBirk}{1}}{#1}{#2}}
\setlist[itemize,1]{leftmargin=20pt}
\newcounter{i} 
\newtoks\striche 
\newcommand{\R}{\mathbb{R}}
\newcommand{\N}{\mathbb{N}} 
\newcommand{\Z}{\mathbb{Z}}
\newcommand{\C}{\mathbb{C}}
\newcommand{\mc}{\mathcal}
\newcommand{\Lp}[1]{\mathrm{L}^{#1}} 
\newcommand{\diffd}{\mathrm{d}} 
\newcommand{\dx}[1][x]{\,\diffd#1}
\newcommand{\cl}[2][]{\overline{#2}\ifthenelse{ \equal{#1}{} }{}{^{#1}}} 
\newcommand{\conj}[1]{\overline{#1}} 
\newcommand{\quspace}[3][]{{\raisebox{.2em}{$#1#2$}\mspace{-4.5mu}\left/\mspace{2mu}\raisebox{-.2em}{$#1#3$}\right.}} 
\providecommand{\complement}{\mathsf{c}}
\DeclareMathOperator{\ran}{ran}
\DeclareMathOperator{\mul}{mul}
\DeclareMathOperator{\id}{id}
\DeclareMathOperator{\dom}{dom}
\DeclareMathOperator{\Div}{div}
\renewcommand{\div}{\Div}
\DeclareMathOperator{\grad}{grad}
\DeclareMathOperator{\rot}{curl}
\DeclarePairedDelimiter{\sset}{\{}{\}} 
\DeclarePairedDelimiter{\norm}{\lVert}{\rVert}
\DeclarePairedDelimiter{\abs}{\vert}{\vert}
\DeclarePairedDelimiterX{\dset}[2]{\{}{\}}{#1\,\delimsize\vert\,\mathopen{} #2} 
\DeclarePairedDelimiterX{\scprod}[2]{\langle}{\rangle}{#1,#2}
\DeclarePairedDelimiterX{\dualprod}[2]{\langle}{\rangle}{#1,#2}
\DeclarePairedDelimiterX{\sdprod}[2]{\llangle}{\rrangle}{#1,#2} 
\renewcommand{\Re}{\operatorname{Re}}
\newcommand{\dualsymb}{\prime}
\newcommand{\adjunsymb}{\ast} 
\newcommand{\dadjunsymb}{\ast_{\mathrm{d}}} 
\newcommand{\hadjunsymb}{\ast_{\mathrm{h}}} 
\newcommand{\adjunX}[1]{^{\adjunsymb_{#1}}}
\newcommand{\adjun}[1][1]{%
  \setcounter{i}{1}%
  \striche={\adjunsymb}%
  \loop%
  \ifnum\value{i}<#1%
  \striche=\expandafter{\the\expandafter\striche\adjunsymb}%
  \stepcounter{i}%
  \repeat%
  ^{\the\striche}%
}
\newcommand{\dadjun}[1][1]{%
  \setcounter{i}{1}%
  \striche={\dadjunsymb}%
  \loop%
  \ifnum\value{i}<#1%
  \striche=\expandafter{\the\expandafter\striche\dadjunsymb}%
  \stepcounter{i}%
  \repeat%
  ^{\the\striche}%
}
\newcommand{\hadjun}[1][1]{%
  \setcounter{i}{1}%
  \striche={\hadjunsymb}%
  \loop%
  \ifnum\value{i}<#1%
  \striche=\expandafter{\the\expandafter\striche\hadjunsymb}%
  \stepcounter{i}%
  \repeat%
  ^{\the\striche}%
}
\newcommand{\dual}[1][1]{%
  \setcounter{i}{1}%
  \striche={\dualsymb}%
  \loop%
  \ifnum\value{i}<#1%
  \striche=\expandafter{\the\expandafter\striche\dualsymb}%
  \stepcounter{i}%
  \repeat%
  ^{\the\striche}%
}
\newcommand{\mapping}[4]{%
  \left\{%
    \begin{array}{rcl}%
      #1 &\to & #2, \\
      #3 &\mapsto & #4
    \end{array}%
  \right.%
}
\newcommand{\trans}{^{\mathsf{T}}}
\newcommand{\Hspace}{\mathrm{H}}
\newcommand{\idop}{\mathrm{I}}
\newcommand{\opid}{\idop}
\newcommand{\conC}{\mathrm{C}}
\newcommand{\hssymbol}{\mathcal{X}}
\newcommand{\hs}{\hssymbol_{+}}
\newcommand{\hsmid}{\hssymbol_{0}}
\newcommand{\hsdual}{\hssymbol_{-}}
\newcommand{\Zm}{\mathcal{Z}_{-}}
\newcommand{\Zp}{\mathcal{Z}_{+}}
\newcommand{\boundtr}{\gamma_{0}}
\newcommand{\normaltr}{\gamma_{\nu}}
  \theoremstyle{thmstyleone}%
  \theoremstyle{plain}
\newtheorem{theorem}{Theorem}[section]
\newtheorem{lemma}[theorem]{Lemma}
\newtheorem{proposition}[theorem]{Proposition}
\newtheorem{corollary}[theorem]{Corollary}
  \theoremstyle{thmstyletwo}%
  \theoremstyle{definition}%
\newtheorem{definition}[theorem]{Definition}
\newtheorem{example}[theorem]{Example}
\newtheorem{conjecture}[theorem]{Conjecture}
\Crefname{conjecture}{Conjecture}{Conjecture}
\crefname{conjecture}{conjecture}{conjecture}
  \theoremstyle{thmstylethree}%
  \theoremstyle{remark}%
\newtheorem{remark}[theorem]{Remark}
\begin{document}


\title{Quasi Gelfand triples}

\ifSn{
  \author*{\fnm{Nathanael} \sur{Skrepek}}
  \email{nathanael.skrepek@math.tu-freiberg.de}

  \affil*{\orgdiv{Department}, \orgname{Organization}, \orgaddress{\street{Street}, \city{City}, \postcode{100190}, \state{State}, \country{Country}}}
}%
{
  \author[Nathanael Skrepek]{Nathanael Skrepek\,\orcidlink{0000-0002-3096-4818}}
  \thanks{\textit{E-mail}: \href{mailto:nathanael.skrepek@math.tu-freiberg.de}{nathanael.skrepek@math.tu-freiberg.de}}
}
\date{\today}
\keywords{Gelfand triples, quasi Gelfand triples, rigged Hilbert spaces, Banach-Gelfand triples}

%
\ifSn%
{\pacs[MSC Classification 2020]{46A20, 46C07, 46E99, 47A70}}%
{\subjclass{46A20, 46C07, 46E99, 47A70}}

\ifSn{}{
\address{TU Bergakademie Freiberg \\
  Institute of Applied Analysis \\
  Akademiestraße 6 \\
  D-09596 Freiberg \\
  Germany}
\email{nathanael.skrepek@math.tu-freiberg.de}
}

\begin{abstract}
  We generalize the notion of Gelfand triples (also called Banach-Gelfand triples or rigged Hilbert spaces) by dropping the necessity of a continuous embedding. This means in our setting we lack of a chain inclusion. We replace the continuous embedding by a closed embedding of a dense subspace. This new notion will be called \emph{quasi Gelfand triple}.
  These triples appear naturally, when we regard the boundary spaces of spatially multidimensional differential operators, e.g., the Maxwell operator.
  We will show that there is a smallest space where we can continuously embed the entire triple. Moreover, we will show density results for intersections of members of the quasi Gelfand triple.
  Finally, we show that every quasi Gelfand triple can be decomposed into two ``ordinary'' Gelfand triples.
\end{abstract}

\maketitle 

\section{Introduction}


Normally, when we talk about Gelfand triples we have a Hilbert space $\hsmid$ and
a reflexive Banach space $\hs$ that can be continuously and densely embedded into
$\hsmid$. The third space $\hsdual$ is given by the completion of $\hsmid$ with respect to
\begin{align*}
  \norm{g}_{\hsdual}
  \coloneqq
  \sup_{f\in\hs\setminus\sset{0}} \frac{\abs{\scprod{g}{f}_{\hsmid}}}{\norm{f}_{\hs}}.
\end{align*}
The duality between $\hs$ and $\hsdual$ is given by
\[
  \dualprod{g}{f}_{\hsdual,\hs} = \lim_{k\to\infty}\scprod{g_{k}}{f}_{\hsmid},
\]
where $(g_{k})_{k\in\N}$ is a sequence in $\hsmid$ that converges to $g$ in $\hsdual$.
The space $\hsdual$ is then isometrically isomorphic to $\hs\dual$.
The theory of Gelfand triples was introduced by I.M.\ Gelfand and A.G.\ Kostyuchenko~\cite{GeKo1955}. The concept has been refined over time.
In the introduction of~\cite{CoGh15} they give a short historical overview of Gelfand triples.
We want to weaken the assumptions such that the norm of $\hs$ is not
necessarily related to the norm of $\hsmid$. Hence, we cannot expect a continuous embedding of $\hs$ into $\hsmid$.

In~\cite{Sk21} this weakened concept was introduced for the case, where $\hs$ is also a Hilbert space.
Here we will introduce these generalized triples also for reflexive Banach spaces $\hs$. Moreover, in~\cite{Sk21} the notion of \emph{quasi Gelfand triples} was developed with the focus on a solution theory for port-Hamiltonian systems.
In particular to handle the boundary spaces of differential operators, like, e.g., the Maxwell operator.
In this work we will take a wider and deeper look at quasi Gelfand triples.

Even a little bit earlier also~\cite{CoGh15} introduced the notion of such quasi Gelfand triples for Hilbert spaces. They call it \emph{triples of closely embedded Hilbert spaces}. The motivation in~\cite{CoGh15} were weighted Sobolev and $\Lp{2}$ spaces, were the positive weight is neither bounded from above nor from below.

In this work we will lift the setting of~\cite{Sk21} to Banach spaces. So the beginning will be relatively similar to the introduction of quasi Gelfand triples in~\cite{Sk21}. This lifting has also be done in the Ph.D.\ thesis~\cite{Sk-Phd}. However, we go beyond the refinements of~\cite{Sk-Phd} and show that there exists a smallest space where we can structure preservingly embed the entire quasi Gelfand triple in \Cref{sec:quasi-Gelfand-triples-with-Hilbert-spaces}. Furthermore, we show a bijective relation between quasi Gelfand triples and Gram operators in \Cref{sec:gram-operator}.
This connection to Gram operators has also been discovered in~\cite{CoGh15} or it was actually the starting point of their journey. They call the Gram operator \emph{the Hamiltonian} of the triple. However, we take the next step and utilize this connection to the Gram operator to construct a decomposition of the quasi Gelfand triple into two ``ordinary'' Gelfand triples.

In \cite{BuCoSh02} the authors construct suitable boundary spaces for the tangential trace and the twisted tangential trace that correspond to the $\rot$ operator. These spaces naturally form a quasi Gelfand triple with $\Lp{2}(\partial\Omega)$ as pivot space. However, they did not pay a lot of attention to this additional structure as they develop their theory particular for the $\Hspace(\rot,\Omega)$ traces (tangential and twisted tangential trace). Moreover, they also give an explicit decomposition of the quasi Gelfand triple into two ``ordinary'' Gelfand triple (without calling it that).

In \Cref{sec:motivation} we will bring up the setting of \cite{BuCoSh02} as a motivation for the notion of quasi Gelfand triple. However, it is also suitable for other pairs of differential operators, e.g., $(\operatorname{symCurl},\operatorname{Curl})$, $(\operatorname{CurlCurl}\trans, \operatorname{CurlCurl}\trans)$, $(\operatorname{symGrad},\operatorname{Div})$, etc..

There is also a link to the notion of \emph{quasi boundary triples}, which was introduced in \cite{BeLa07}. The combination of boundary triples and quasi Gelfand triples is not entirely the same as quasi boundary triples, however both can be used to overcome limitations of boundary triples alone.






%

\section{Preliminary}


Since we will often switch between Hilbert space inner products and dual pairings, it is more convenient to always regard the anti-dual space instead of the dual space, which we will do. The anti-dual space is the space of all continuous antilinear mappings from the vectors space to $\C$.
Moreover, we will use a generalized concept for (unbounded) linear operators, namely linear relations.
The following notion of linear relations, dual pairs and adjoints with respect to dual pairs are carefully covered in~\cite[Ch.~1, Ch.~2]{Sk-Phd}.

A linear relation $T$ between the vector spaces $X$ and $Y$ is a linear subspace
of $X\times Y$. Clearly, every linear operator is also a linear relation
(we do not distinguish between a function and its graph).
For linear operators we have
\(
\begin{bsmallmatrix}
  x \\ y
\end{bsmallmatrix}
\in T
\)
is equivalent to $Tx = y$.
We will use the following notation
\begin{align*}
  \ker T &\coloneqq \dset*{x \in X}{\begin{bsmallmatrix} x \\ 0\end{bsmallmatrix} \in T},
         &\ran T &\coloneqq \dset*{y \in Y}{\exists x :\begin{bsmallmatrix} x \\ y\end{bsmallmatrix} \in T},
  \\
  \mul T &\coloneqq \dset*{y \in Y}{\begin{bsmallmatrix}0 \\ y\end{bsmallmatrix} \in T},
         &\dom T &\coloneqq \dset*{x \in X}{\exists y :\begin{bsmallmatrix}x \\ y\end{bsmallmatrix} \in T}.
\end{align*}
Thus, $T$ is single-valued (an operator), if $\mul T = \sset{0}$.
The closure $\cl{T}$ of a linear relation $T$ is the closure in $X\times Y$.
Note that every linear relation is closable. Also every operator has a
closure as a linear relation, but its closure can be multi-valued. Therefore,
showing $\mul \cl{T} = \sset{0}$ is necessary, even if $\mul T = \sset{0}$.

\begin{definition}\label{def:dual-pair}
  Let $X$ and $Y$ be Banach spaces and let $\dualprod{\cdot}{\cdot}_{Y,X}\colon Y \times X \to \C$ be continuous and sesquilinear (linear in the first argument and antilinear in the second argument). We define
  \begin{align*}
    \Psi\colon\mapping{Y}{X\dual}{y}{\dualprod{y}{\cdot}_{Y,X},}
    \quad\text{and}\quad
    \Phi\colon\mapping{X}{Y\dual}{x}{\conj{\dualprod{\cdot}{x}_{Y,X}}.}
  \end{align*}
  If $\Psi$ is isometric and bijective, then we say that $(X,Y)$ is a \emph{(anti-)dual pair} and $\dualprod{\cdot}{\cdot}_{Y,X}$ is its \emph{(anti-)dual pairing}.

  We define
  \begin{equation*}
    \dualprod{x}{y}_{X,Y} \coloneqq \conj{\dualprod{y}{x}_{Y,X}},
  \end{equation*}
  which is again a sesquilinear form.

  If also $\Phi$ is isometric and bijective, then we say that $(X,Y)$ is a \emph{complete (anti-)\-dual pair}.
\end{definition}

Clearly, $(X,X\dual)$ is a dual pair with the canonical dual pairing $\dualprod{x'}{x}_{X\dual,X} = x'(x)$ and it is complete, if $X$ is reflexive. For a Hilbert space $(H,H)$ is a complete dual pair with the inner product as dual pairing $\dualprod{x}{y}_{H,H} = \scprod{x}{y}_{H}$. However, if we think of the Sobolev space $\Hspace^{1}(\R)$ there are two ``natural'' possible dual pairings: the standard Hilbert space (complete) dual pair $(\Hspace^{1}(\R),\Hspace^{1}(\R))$ and the duality induced by the $\Lp{2}$ inner product $(\Hspace^{1}(\R),\Hspace^{-1}(\R))$ given by $\dualprod{x}{y}_{\Hspace^{1}(\R),\Hspace^{-1}(\R)} = \lim_{n\to\infty} \scprod{x}{y_{n}}_{\Lp{2}(\R)}$. Hence, in order to avoid saying both $\Hspace^{1}(\R)$ and $\Hspace^{-1}(\R)$ is the dual space of $\Hspace^{1}(\R)$, which can lead to confusion, we prefer the term (complete) dual pair.

\begin{definition}\label{def:adjoint-relation}
  Let $(X_{1},Y_{1})$, $(X_{2},Y_{2})$ be dual pairs and $A$ a linear relation between $X_{1}$ and $X_{2}$. Then we define the \emph{adjoint linear relation} by
  \begin{equation*}
    A\adjunX{Y_{2}\times Y_{1}} \coloneqq
    \dset*{
      \begin{bmatrix}
        y_{2} \\ y_{1}
      \end{bmatrix}
      \in Y_{2} \times Y_{1}
    }{
      \dualprod{y_{2}}{x_{2}}_{Y_{2},X_{2}} = \dualprod{y_{1}}{x_{1}}_{Y_{1},X_{1}}
      \text{ for all }
      \begin{bmatrix}
        x_{1} \\ x_{2}
      \end{bmatrix}
      \in A
    }.
  \end{equation*}
\end{definition}

We will just write $A\adjun$, if the dual pairs are clear.

For a Banach space $X$, we will regard the dual pair $(X,X\dual)$ for the adjoint, if no other dual pair is given. Similar, for a Hilbert space $H$ we will regard the dual pair $(H,H)$, if no other dual pair is given.

Note that this definition matches the usual Hilbert space adjoint, if $A$ is a densely defined operator between two Hilbert spaces.

\begin{remark}
  If $A$ is an operator ($\mul A = \sset{0}$) from $X_{1}$ to $X_{2}$, then we can characterize the domain of $A\adjun$ by
  \begin{equation*}
    y_{2} \in \dom A\adjun \quad\Leftrightarrow\quad
    \dom A \ni x_{1} \mapsto \dualprod{y_{2}}{A x_{1}}_{Y_{2},X_{2}} \;\text{is continuous w.r.t.\ } \norm{\cdot}_{X_{1}}.
  \end{equation*}
  Moreover, we have the following relations
  \begin{equation*}
    \ker A\adjun = (\ran A)^{\perp} \quad\text{and}\quad \mul A\adjun = (\dom A)^{\perp},
  \end{equation*}
  where $M^{\perp}$ denotes the annihilator space of $M$ (which is the orthogonal complement in the Hilbert space case).
\end{remark}

\section{Motivation}%
\label{sec:motivation}

Let $\Omega \subseteq \R^{3}$ be a bounded open set with bounded Lipschitz boundary. For $f, g \in \conC^{\infty}(\R^{3})$ we have the following integration by parts formula:
\begin{equation*}
  \scprod{\div f}{g}_{\Lp{2}(\Omega)} + \scprod{f}{\grad g}_{\Lp{2}(\Omega)} = \scprod{\nu \cdot \boundtr f}{\boundtr g}_{\Lp{2}(\partial\Omega)},
\end{equation*}
where $\nu$ is the normal vector on $\partial \Omega$ and $\boundtr$ is the boundary trace.
It is also well known that we can extend this formula for $f \in \Hspace(\div,\Omega)$ and $g \in \Hspace^{1}(\Omega)$:
\begin{equation*}
  \scprod{\div f}{g}_{\Lp{2}(\Omega)} + \scprod{f}{\grad g}_{\Lp{2}(\Omega)} = \dualprod{\normaltr f}{\boundtr g}_{\Hspace^{-\nicefrac{1}{2}}(\partial\Omega),\Hspace^{\nicefrac{1}{2}}(\partial\Omega)},
\end{equation*}
where $\normaltr$ is the continuous extension of $\nu \cdot \boundtr$. In this extension we stumble over the Gelfand triple $(\Hspace^{\nicefrac{1}{2}}(\partial\Omega),\Lp{2}(\partial\Omega),\Hspace^{-\nicefrac{1}{2}}(\partial\Omega))$. However, in general such an integration by parts formula will not automatically lead to such an extension where we can replace the $\Lp{2}$ inner product on the boundary by a dual pairing that comes from a Gelfand triple with $\Lp{2}(\partial\Omega)$ as pivot space. For example for $f,g \in \conC^{\infty}(\R^{3})$ we have
\begin{equation}\label{eq:int-by-parts-rot}
  \scprod{\rot f}{g}_{\Lp{2}(\Omega)} + \scprod{f}{\rot g}_{\Lp{2}(\Omega)} = \dualprod{\nu \times \boundtr f}{(\nu \times \boundtr g)\times \nu}_{\Lp{2}(\partial\Omega)},
\end{equation}
but contrary to the previous case neither $\nu\times \boundtr $ nor $(\nu \times \boundtr ) \times \nu$ can be continuously extended to $\Hspace(\rot,\Omega)$ such that its codomain is still $\Lp{2}(\partial\Omega)$ (or can be continuously embedded into $\Lp{2}(\partial\Omega)$), see~\cite[Ex.~A.4]{Sk21}.
Hence, in order to better understand the relation between the extension of \eqref{eq:int-by-parts-rot} to $\Hspace(\rot,\Omega)$ and the $\Lp{2}(\partial\Omega)$ inner product we need a more general tool than Gelfand triples.
In order to try to find a suitable boundary space such that we can extend $\nu \times \boundtr$ on $\Hspace(\rot,\Omega)$, we endow $\ran (\nu \times \boundtr)$ with the range norm that comes from $\Hspace(\rot,\Omega)$. This gives a norm on a dense subspace of $\Lp{2}_{\tau}(\partial\Omega) = \dset{\phi \in \Lp{2}(\partial\Omega)}{\nu \cdot f = 0}$ that is unrelated to $\norm{\cdot}_{\Lp{2}(\partial\Omega)}$. This setting will be our starting point.
This particular problem was treated in \cite{Sk21}.
Here we want to discover the world of quasi Gelfand triples without any particular applications in mind (or maybe with \Cref{con:weak,con:strong} in mind).


\subsection{Starting point}
We will have the following setting: Let
$\hsmid$\label{symb:hsmid} be a Hilbert space with the inner product $\scprod{\cdot}{\cdot}_{\hsmid}$ and $\scprod{\cdot}{\cdot}_{\hs}$ be another inner product on $\hsmid$ (not necessarily related to $\scprod{\cdot}{\cdot}_{\hsmid}$), which is defined on a dense (w.r.t.\ $\norm{\cdot}_{\hsmid}$) subspace $\tilde{D}_{+}$\label{symb:Dplus-tilde} of $\hsmid$.
We denote the completion of $\tilde{D}_{+}$ w.r.t.\
$\norm{\cdot}_{\hs}$ ($\norm{f}_{\hs} \coloneqq \sqrt{\scprod{f}{f}_{\hs}}$) by $\hs$. This completion is, by construction
a Hilbert space with the extension of $\scprod{\cdot}{\cdot}_{\hs}$, for which we use
the same symbol. Now we have $\tilde{D}_{+}$ is dense in $\hsmid$ w.r.t.\
$\norm{\cdot}_{\hsmid}$ and dense in $\hs$ w.r.t.\ $\norm{\cdot}_{\hs}$.
\Cref{fig:quasi-gelfand-motivation-setting} illustrates this setting.

\smallskip\par
Note that $\hs$, as a Hilbert space, is automatically reflexive.
For the further construction the crucial property of $\hs$ is its reflexivity.
Hence, we will weaken the previous setting such that $\hs$ is only a reflexive Banach space:
\begin{itemize}
  \item $\hsmid$ Hilbert space endowed with $\scprod{\cdot}{\cdot}_{\hsmid}$.

  \item $\tilde{D}_{+}$ dense subspace of $\hsmid$ (w.r.t.\
    $\norm{\cdot}_{\hsmid}$).

  \item $\norm{\cdot}_{\hs}$ another norm defined on $\tilde{D}_{+}$.

  \item $\hs$ completion of $\tilde{D}_{+}$ with respect to $\norm{\cdot}_{\hs}$ is reflexive.
\end{itemize}

\begin{figure}[ht]
  \centering
  \begin{tikzpicture}
    \begin{scope}
      \clip (-0.5,-0.5) circle (1.5);
      \fill[blue,opacity=0.15] (0,0) circle (1.5);
    \end{scope}
    \draw[thick](0,0) circle [radius=1.5];
    \draw[thick,blue](-0.5,-0.5) circle [radius=1.5];
    \node[left,blue] at (-2,-0.5) {$\hs$};
    \node[above] at (0,1.5) {$\hsmid$};
    \node[blue] at (-0.25,-0.25) {$\tilde{D}_{+}$};
  \end{tikzpicture}
  \caption{\label{fig:quasi-gelfand-motivation-setting}Setting of $\hsmid$, $\tilde{D}_{+}$ and $\hs$.}
\end{figure}


\begin{example}
  Let $\hsmid = \ell^{2}(\Z \setminus \sset{0})$ with the standard inner product $\scprod{x}{y}_{\hsmid} = \sum_{n=1}^{\infty} x_{n}\conj{y_{n}}+ x_{-n}\conj{y_{-n}}$. We define the inner product
  \[
    \scprod{x}{y}_{\hs} \coloneqq \sum_{n=1}^{\infty} n^{2} x_{n}\conj{y_{n}}+ \frac{1}{n^{2}}x_{-n}\conj{y_{-n}}
  \]
  and the set $\tilde{D}_{+} \coloneqq \dset{f \in \hsmid}{\norm{f}_{\hs} < +\infty}$. Clearly, this inner product is well-defined on $\tilde{D}_{+}$. Let $e_{i}$ denote the sequence which is $1$ on the $i$-th position and $0$ elsewhere. Since $\dset{e_{i}}{i \in \Z\setminus\sset{0}}$ is a orthonormal basis of $\hsmid$ and contained in $\tilde{D}_{+}$, $\tilde{D}_{+}$ is dense in $\hsmid$. The sequence $\big(\sum_{i=1}^{n} e_{-i}\big)_{n\in\N}$ is a Cauchy sequence with respect to $\norm{\cdot}_{\hs}$, but not with respect to $\norm{\cdot}_{\hsmid}$.
\end{example}

\begin{definition}\label{def:Dminus}
  We define
  \begin{align*}
    \renewcommand{\quad}{\mspace{10mu}}
    \norm{g}_{\hsdual} \coloneqq
    \sup_{f\in \tilde{D}_{+}\setminus\sset{0}}
    \frac{\abs{\scprod{g}{f}_{\hsmid}}}{\norm{f}_{\hs}}
    \quad \text{for}\ g\in \hsmid
    \quad \text{and} \quad
    D_{-} \coloneqq \dset[\Big]{g \in \hsmid}{\norm{g}_{\hsdual} < +\infty}.
  \end{align*}
  \label{symb:Dminus}%
  We denote the completion of $D_{-}$ w.r.t.\ $\norm{\cdot}_{\hsdual}$ by $\hsdual$.
  We will also denote the extension of $\norm{\cdot}_{\hsdual}$ to $\hsdual$ by $\norm{\cdot}_{\hsdual}$.
\end{definition}

\begin{remark}\label{re:D-minus-identified-with-X-plus-dual}
By definition of $D_{-}$ we can identify every $g \in D_{-}$ with an element of
$\hs\dual$ by the continuous extension of
\begin{equation*}
  \psi_{g}\colon
  \mapping{D_{+}}{\C}{f}{\scprod{g}{f}_{\hsmid},}
\end{equation*}
on $\hs$. We denote this extension again by $\psi_{g}$.
By definition of $D_{-}$ we have
$\norm{\psi_{g}}_{\hs\dual} = \norm{g}_{\hsdual}$ for $g\in D_{-}$.
Hence, we can extend the isometry
\begin{equation*}
  \Psi\colon
  \mapping{D_{-}}{\hs\dual}{g}{\psi_{g},}
\end{equation*}
by continuity on $\hsdual$, this is extension is again denoted by $\Psi$. So $\hsdual$ can be seen as the closure of $D_{-}$ in $\hs\dual$.
\end{remark}

We can define a dual pairing between $\hs$ and $\hsdual$ by
\begin{equation*}
  \dualprod{g}{f}_{\hsdual,\hs} \coloneqq \dualprod{\Psi g}{f}_{\hs\dual,\hs}.
\end{equation*}
However, this does not necessarily make $(\hs,\hsdual)$ a dual pair in the sense of \Cref{def:dual-pair}, because we do not know whether $\Psi$ is surjective.

\begin{lemma}\label{le:Dminus-complete}
  $D_{-}$ is complete with respect to $\norm{g}_{\hsdual \cap \hsmid} \coloneqq \sqrt{\vphantom{\norm{g}^{2}}\smash{\norm{g}_{\hsmid}^{2} + \norm{g}_{\hsdual}^{2}}}$.
\end{lemma}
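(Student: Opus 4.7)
The plan is to take a Cauchy sequence $(g_{n})_{n\in\N}$ in $D_{-}$ with respect to $\norm{\cdot}_{\hsdual \cap \hsmid}$ and produce a limit inside $D_{-}$ with respect to the same norm. By definition of this combined norm, $(g_{n})$ is simultaneously Cauchy for $\norm{\cdot}_{\hsmid}$ and for $\norm{\cdot}_{\hsdual}$. Since $\hsmid$ is complete, there is some $g \in \hsmid$ with $g_{n} \to g$ in $\norm{\cdot}_{\hsmid}$; the candidate for the limit in $D_{-}$ is exactly this $g$.

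First I would verify $g \in D_{-}$, i.e.\ $\norm{g}_{\hsdual} < +\infty$. The Cauchy condition in $\norm{\cdot}_{\hsdual}$ gives a uniform bound $C \coloneqq \sup_{n} \norm{g_{n}}_{\hsdual} < +\infty$, so for each $f \in \tilde{D}_{+}$ and each $n$,
\begin{equation*}
  \abs{\scprod{g_{n}}{f}_{\hsmid}} \leq \norm{g_{n}}_{\hsdual}\, \norm{f}_{\hs} \leq C\norm{f}_{\hs}.
\end{equation*}
Letting $n\to\infty$ the left-hand side converges to $\abs{\scprod{g}{f}_{\hsmid}}$ by continuity of the $\hsmid$-inner product, so $\abs{\scprod{g}{f}_{\hsmid}} \leq C\norm{f}_{\hs}$ and thus $\norm{g}_{\hsdual}\leq C$, placing $g$ in $D_{-}$.

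Next I would show $g_{n} \to g$ in $\norm{\cdot}_{\hsdual}$. Given $\varepsilon > 0$, pick $N$ such that $\norm{g_{n} - g_{m}}_{\hsdual} < \varepsilon$ for all $n,m \geq N$. Then for every $f \in \tilde{D}_{+}\setminus\set{0}$ and $n,m \geq N$,
\begin{equation*}
  \frac{\abs{\scprod{g_{n}-g_{m}}{f}_{\hsmid}}}{\norm{f}_{\hs}} \leq \norm{g_{n}-g_{m}}_{\hsdual} < \varepsilon.
\end{equation*}
Keeping $n$ fixed and letting $m \to \infty$, the $\hsmid$-convergence yields $\abs{\scprod{g_{n}-g}{f}_{\hsmid}} \leq \varepsilon\,\norm{f}_{\hs}$, so taking the supremum over $f$ gives $\norm{g_{n}-g}_{\hsdual} \leq \varepsilon$ for all $n\geq N$. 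Combined with $g_{n} \to g$ in $\norm{\cdot}_{\hsmid}$, this gives convergence in $\norm{\cdot}_{\hsdual \cap \hsmid}$.

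There is no real obstacle here: the argument is the standard ``pointwise limit plus uniform bound'' trick that makes dual-type norms automatically closed. The only small care needed is that the supremum defining $\norm{\cdot}_{\hsdual}$ is taken over $\tilde{D}_{+}$ (not over $\hs$), which is exactly why swapping limit and sup is harmless: one fixes a test vector $f \in \tilde{D}_{+}\subseteq\hsmid$, exploits the $\hsmid$-inner-product continuity for that single $f$, and only afterward takes the supremum.
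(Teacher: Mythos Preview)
Your proof is correct and follows essentially the same approach as the paper: identify the $\hsmid$-limit $g$, use boundedness of $\norm{g_n}_{\hsdual}$ together with $\hsmid$-continuity of the inner product to show $g\in D_{-}$, and then pass to the limit $m\to\infty$ inside the Cauchy estimate (for each fixed test vector $f\in\tilde{D}_{+}$) before taking the supremum. The paper carries out the last step with an explicit $\epsilon/2$ splitting via an $m_f$ depending on $f$, whereas you take the limit directly; the two arguments are equivalent.
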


\begin{proof}
  Let $(g_{n})_{n\in\N}$ be a Cauchy sequence in $D_{-}$ with respect to $\norm{\cdot}_{\hsdual\cap\hsmid}$.
  Then $(g_{n})_{n\in\N}$ is a convergent sequence in $\hsmid$ (w.r.t.\
  $\norm{\cdot}_{\hsmid}$) and a Cauchy sequence in $D_{-}$ (w.r.t.\
  $\norm{\cdot}_{\hsdual}$). We denote the limit in $\hsmid$ by $g_{0}$.
  By definition of $\norm{\cdot}_{\hsdual}$
  we obtain for $f\in \tilde{D}_{+}$
  \begin{align*}
    \abs{\scprod{g_{0}}{f}_{\hsmid}} = \lim_{n\to\infty} \abs{\scprod{g_{n}}{f}_{\hsmid}}
    \leq \lim_{n\to\infty} \norm{g_{n}}_{\hsdual} \norm{f}_{\hs} \leq C \norm{f}_{\hs}
  \end{align*}
  and consequently $g_{0} \in D_{-}$.

  Let $\epsilon > 0$ be arbitrary. Since $(g_n)_{n\in\N}$ is a Cauchy sequence with respect to $\norm{\cdot}_{\hsdual}$, there is an $n_{0}\in \N$ such that for all $f\in \tilde{D}_{+}$ with $\norm{f}_{\hs} = 1$
  \begin{align*}
    \abs{\scprod{g_{n}-g_{m}}{f}_{\hsmid}} \leq \frac{\epsilon}{2}, \quad \text{if}\quad n,m \geq n_{0}
  \end{align*}
  holds true. Furthermore, for every $f \in \tilde{D}_{+}$ there exists an $m_{f}\geq n_{0}$ such that $\abs{\scprod{g_0 - g_{m_{f}}}{f}_{\hsmid}} \leq \frac{\epsilon \norm{f}_{\hs}}{2}$, because $g_m\to g_0$ w.r.t.\ $\norm{\cdot}_{\hsmid}$. This yields
  \begin{align*}
    \frac{\abs{\scprod{g_{0}-g_{n}}{f}_{\hsmid}}}{\norm{f}_{\hs}} \leq \frac{\abs{\scprod{g_{0} - g_{m_{f}}}{f}_{\hsmid}}}{\norm{f}_{\hs}} + \frac{\abs{\scprod{g_{m_{f}}-g_{n}}{f}_{\hsmid}}}{\norm{f}_{\hs}} \leq \epsilon,
    \quad \text{if}\quad n\geq n_{0}.
  \end{align*}
  Since the right-hand-side is independent of $f$, we obtain
  \begin{align*}
    \norm{g_{0}-g_{n}}_{\hsdual} = \sup_{f\in \tilde{D}_{+}\setminus\sset{0}} \frac{\abs{\scprod{g_{0}-g_{n}}{f}_{\hsmid}}}{\norm{f}_{\hs}} \leq \epsilon,
    \quad \text{if}\quad n\geq n_{0}.
  \end{align*}
  Hence, $g_{0}$ is also the limit of $(g_{n})_{n\in\N}$ with respect to $\norm{\cdot}_{\hsdual}$ and consequently the limit of  $(g_{n})_{n\in\N}$ with respect to $\norm{\cdot}_{\hsdual\cap\hsmid}$.
\end{proof}

Strictly speaking $\tilde{D}_{+}$ and $D_{-}$ are subsets of $\hsmid$, but sometimes we rather want to regard them as subsets of $\hs$ and $\hsdual$, respectively. Hence, introduce the following embedding mappings
\begin{equation*}
  \tilde{\iota}_{+}\colon
  \mapping{\tilde{D}_{+}\subseteq \hs}{\hsmid}{f}{f,}
  \quad\text{and} \quad
  \iota_{-}\colon
  \mapping{D_{-}\subseteq \hsdual}{\hsmid}{g}{g.}
\end{equation*}
This allows us to distinguish between $f \in \tilde{D}_{+}$ as element of $\hs$ and $\tilde{\iota}_{+}(f)$ as element of $\hsmid$, if necessary. Clearly, the same for $g \in D_{-}$.

\begin{lemma}\label{th:iotap-and-iotam}
  The embedding
  $\tilde{\iota}_{+}$
  is a
  densely defined operator with $\ran \tilde{\iota}_{+}$ is dense in $\hsmid$
  and $\ker \tilde{\iota}_{+} = \sset{0}$. Furthermore, the embedding
  $\iota_{-}$
  is closed and
  $\ker \iota_{-} = \sset{0}$.
\end{lemma}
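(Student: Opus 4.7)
The plan is to dispatch the three assertions about $\tilde{\iota}_{+}$ essentially from the definitions, and then use \Cref{le:Dminus-complete} to obtain the closedness of $\iota_{-}$.

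For $\tilde{\iota}_{+}$, observe that by construction $\hs$ is the completion of $\tilde{D}_{+}$ with respect to $\norm{\cdot}_{\hs}$, so $\dom \tilde{\iota}_{+} = \tilde{D}_{+}$ is dense in $\hs$. The range $\ran \tilde{\iota}_{+} = \tilde{D}_{+}$ (now viewed inside $\hsmid$) is dense in $\hsmid$ by the very hypothesis that $\tilde{D}_{+}$ is $\norm{\cdot}_{\hsmid}$-dense. Since $\tilde{\iota}_{+}$ is the identity on vectors and the canonical map $\tilde{D}_{+} \to \hs$ is injective (completion of a normed space preserves the original vectors), $\ker \tilde{\iota}_{+} = \set{0}$.

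For the closedness of $\iota_{-}$, the idea is to verify the graph condition using \Cref{le:Dminus-complete}. Suppose $(g_{n})_{n\in\N}$ is a sequence in $D_{-}$ with $g_{n} \to g$ in $\hsdual$ and $\iota_{-}(g_{n}) = g_{n} \to h$ in $\hsmid$. Both sequences are then Cauchy in their respective norms, hence $(g_{n})$ is Cauchy with respect to $\norm{\cdot}_{\hsdual\cap\hsmid}$. By \Cref{le:Dminus-complete} there exists $g_{0} \in D_{-}$ with $g_{n} \to g_{0}$ in both $\norm{\cdot}_{\hsdual}$ and $\norm{\cdot}_{\hsmid}$. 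Uniqueness of limits gives $g_{0} = g$ in $\hsdual$ (using the canonical embedding $D_{-} \hookrightarrow \hsdual$) and $g_{0} = h$ in $\hsmid$, so $(g,h) = (g_{0},\iota_{-}(g_{0}))$ lies in the graph of $\iota_{-}$.

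Finally, $\ker \iota_{-} = \set{0}$ amounts to the fact that $\norm{\cdot}_{\hsdual}$ separates the points of $D_{-}$: if $g \in D_{-}$ satisfies $\iota_{-}(g) = g = 0$ in $\hsmid$, then $\scprod{g}{f}_{\hsmid} = 0$ for every $f \in \tilde{D}_{+}$, hence $\norm{g}_{\hsdual} = 0$, so $g = 0$ in $\hsdual$. I expect no genuine obstacle here; the only mild care is in keeping straight the three different ambient spaces in which the vectors of $\tilde{D}_{+}$ and $D_{-}$ are being viewed, and that care is exactly what \Cref{le:Dminus-complete} encapsulates.
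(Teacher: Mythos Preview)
Your proof is correct and follows the same approach as the paper: the assertions about $\tilde{\iota}_{+}$ come directly from the definitions, and the closedness of $\iota_{-}$ is deduced from \Cref{le:Dminus-complete}. The paper's proof is terser (it simply says ``clearly'' for the kernel statements and cites \Cref{le:Dminus-complete} for closedness), but you have correctly unpacked exactly what lies behind those words.
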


\begin{proof}
  By assumption on $\tilde{D}_{+}$ and definition of $\hs$ the embedding
  $\tilde{\iota}_{+}$ is densely defined and has a dense range. Clearly,
  $\ker \tilde{\iota}_{+} = \sset{0}$ and $\ker \iota_{-} = \sset{0}$.
  By \Cref{le:Dminus-complete} $\iota_{-}$ is closed.
\end{proof}

\begin{lemma}\label{le:adjoint-embedding}
  Let $\tilde{\iota}_{+}\adjun = \tilde{\iota}_{+}\adjunX{\hsmid \times \hs\dual}$
  denote the adjoint relation (w.r.t.\ the dualities $(\hsmid,\hsmid)$ and $(\hs,\hs\dual)$) of $\tilde{\iota}_{+}$.
  Then $\tilde{\iota}_{+}\adjun$ is an operator (single-valued, i.e., $\mul \tilde{\iota}_{+}\adjun = \sset{0}$) and
  $\ker \tilde{\iota}_{+}\adjun = \sset{0}$.
  Its domain coincides with $D_{-}$ and
  $\tilde{\iota}_{+}\adjun\iota_{-}\colon D_{-}\subseteq \hsdual \to \hs\dual$ is
  isometric.

  If $\ker \cl{\tilde{\iota}_{+}} = \sset{0}$, then $\ran \tilde{\iota}_{+}\adjun$ is dense in $\hs\dual$.
\end{lemma}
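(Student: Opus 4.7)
The plan is to handle the five assertions in turn, exploiting throughout that $\tilde{\iota}_{+}$ is nothing but the identity embedding of $\tilde{D}_{+}\subseteq \hs$ into $\hsmid$, so that its graph is $\dset{(f,f)}{f\in\tilde{D}_{+}}\subseteq \hs\times\hsmid$. Unpacking \Cref{def:adjoint-relation} with the dual pairs $(\hsmid,\hsmid)$ and $(\hs,\hs\dual)$, a pair $(y_{2},y_{1})\in \hsmid\times\hs\dual$ lies in $\tilde{\iota}_{+}\adjun$ exactly when
\[
  \scprod{y_{2}}{f}_{\hsmid}=\dualprod{y_{1}}{f}_{\hs\dual,\hs}\quad\text{for every } f\in\tilde{D}_{+}.
\]
From this, taking $y_{2}=0$ and using density of $\tilde{D}_{+}$ in $\hs$ gives $\mul\tilde{\iota}_{+}\adjun=\set{0}$, and taking $y_{1}=0$ and using density of $\tilde{D}_{+}$ in $\hsmid$ gives $\ker\tilde{\iota}_{+}\adjun=\set{0}$.

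For the domain, $y_{2}\in\dom\tilde{\iota}_{+}\adjun$ amounts to the existence of $y_{1}\in\hs\dual$ fulfilling the identity above; since $\tilde{D}_{+}$ is dense in $\hs$, this is equivalent to $f\mapsto\scprod{y_{2}}{f}_{\hsmid}$ being bounded on $\tilde{D}_{+}$ w.r.t.\ $\norm{\cdot}_{\hs}$, which by the very definition of $\norm{\cdot}_{\hsdual}$ is exactly $y_{2}\in D_{-}$. The unique $y_{1}$ is then the extension $\psi_{y_{2}}$ of \Cref{re:D-minus-identified-with-X-plus-dual}, so $\tilde{\iota}_{+}\adjun y_{2}=\psi_{y_{2}}$, and the isometry reduces to the equality $\norm{\psi_{g}}_{\hs\dual}=\norm{g}_{\hsdual}$ already noted there.

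For the density of $\ran\tilde{\iota}_{+}\adjun$ in $\hs\dual$ I would apply the bipolar theorem in the product $\hs\times\hsmid$. Reflexivity of $\hs$ reduces density to showing that any $\phi\in\hs$ with $\dualprod{y}{\phi}_{\hs\dual,\hs}=0$ for all $y\in\ran\tilde{\iota}_{+}\adjun$ must vanish; the strategy is to exhibit such a $\phi$ as first coordinate of a point $(\phi,0)\in\cl{\tilde{\iota}_{+}}$, since then the assumption $\ker\cl{\tilde{\iota}_{+}}=\set{0}$ forces $\phi=0$. A functional $(y_{1},y_{2})\in\hs\dual\times\hsmid$ annihilates the graph of $\tilde{\iota}_{+}$ precisely when
\[
  \dualprod{y_{1}}{f}_{\hs\dual,\hs}+\scprod{y_{2}}{f}_{\hsmid}=0\quad\text{for all } f\in\tilde{D}_{+},
\]
which by the characterisation of $\tilde{\iota}_{+}\adjun$ above is the same as $y_{2}\in D_{-}$ together with $y_{1}=-\tilde{\iota}_{+}\adjun y_{2}$. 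Evaluating such a functional at $(\phi,0)$ yields $-\dualprod{\tilde{\iota}_{+}\adjun y_{2}}{\phi}_{\hs\dual,\hs}=0$ by the hypothesis on $\phi$, so $(\phi,0)$ is annihilated by the whole annihilator of the graph, and the bipolar theorem places it in $\cl{\tilde{\iota}_{+}}$ as desired.

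The genuine obstacle is this last step: one must identify the annihilator of the graph of $\tilde{\iota}_{+}$ inside $\hs\dual\times\hsmid$ with, up to sign, the graph of $\tilde{\iota}_{+}\adjun$, and this identification depends on pairing up the sesquilinear conventions of the two factors correctly. Once this identification is in place, the Hahn-Banach/bipolar machinery in $\hs\times\hsmid$ (applicable since $\hs$ is reflexive and $\hsmid$ is Hilbert) closes the argument, and the remaining four assertions are essentially mechanical consequences of $\tilde{\iota}_{+}$ being the identity on $\tilde{D}_{+}$.
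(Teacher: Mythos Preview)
Your proof is correct and follows essentially the same approach as the paper. For the density of $\ran\tilde{\iota}_{+}\adjun$, the paper compresses your bipolar argument into the one-line chain $\set{0} = \ker\cl{\tilde{\iota}_{+}} = \ker\tilde{\iota}_{+}\adjun[2] = (\ran\tilde{\iota}_{+}\adjun)^{\perp}$ (using reflexivity of $\hs$ for the middle equality); this is exactly the abstract form of what you unpack by hand, so the sesquilinear bookkeeping you flag as an obstacle is already absorbed into the standard identity $\cl{T} = T\adjun[2]$ for adjoint relations.
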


\begin{proof}
  The density of the domain of $\tilde{\iota}_{+}$ yields $\mul \tilde{\iota}_{+}\adjun = (\dom \tilde{\iota}_{+})^{\perp} = \sset{0}$, and $\cl[\hsmid]{\ran \tilde{\iota}_{+}} = \hsmid$ yields $\ker \tilde{\iota}_{+}\adjun = \sset{0}$.
  The following equivalences show $\dom \tilde{\iota}_{+}\adjun = D_{-}$:
\begin{align*}
  g \in \dom \tilde{\iota}_{+}\adjun
  \quad &\Leftrightarrow\quad \tilde{D}_{+} \ni f \mapsto \scprod{g}{\tilde{\iota}_{+} f}_{\hsmid} \; \text{is continuous w.r.t.}\ \norm{\cdot}_{\hs} \\
        &\Leftrightarrow\quad \sup_{f\in \tilde{D}_{+}\setminus\sset{0}} \frac{\abs{\scprod{g}{f}_{\hsmid}}}{\norm{f}_{\hs}} < +\infty \\
        &\Leftrightarrow\quad g \in D_{-}.
\end{align*}
For $g\in D_{-}\subseteq \hsdual$ we have
\begin{align*}
  \norm{g}_{\hsdual}
  = \sup_{f\in \tilde{D}_{+}\setminus\sset{0}} \frac{\abs{\scprod{\iota_{-} g}{f}_{\hsmid}}}{\norm{f}_{\hs}}
  = \sup_{f\in \tilde{D}_{+}\setminus\sset{0}} \frac{\abs{\dualprod{\tilde{\iota}_{+}\adjun \iota_{-}g}{f}_{\hs\dual,\hs}}}{\norm{f}_{\hs}}
  = \norm{\tilde{\iota}_{+}\adjun \iota_{-}g}_{\hs\dual},
\end{align*}
which proves that $\tilde{\iota}_{+}\adjun \iota_{-}$ is isometric.

Note that the reflexivity of $\hs$ implies $\cl{\tilde{\iota}_{+}} = \tilde{\iota}_{+}\adjun[2]$.
If $\ker \cl{\tilde{\iota}_{+}} = \sset{0}$, then the following equation implies the density of $\ran \tilde{\iota}_{+}\adjun$ in $\hs\dual$
\begin{equation*}
  \sset{0} = \ker \cl{\tilde{\iota}_{+}} = \ker \tilde{\iota}_{+}\adjun[2] = (\ran \tilde{\iota}_{+}\adjun)^{\perp}.
  \ifSn{\tag*{\qedhere}}{\qedhere}
\end{equation*}
\end{proof}

\begin{remark}\label{re:D-minus-iota-plus-adjun-X-plus-dual}
  As mentioned in \Cref{re:D-minus-identified-with-X-plus-dual} every
  $g\in D_{-}$ can be regarded as an element of $\hs\dual$ by $\psi_{g}$.
  Let $g\in D_{-}$, $f \in \hs$ and $(f_{n})_{n\in\N}$ in $\tilde{D}_{+}$ converging to $f$ w.r.t.\ $\norm{\cdot}_{\hs}$.
  Since $D_{-} = \dom \tilde{\iota}_{+}\adjun$, we have
  \begin{equation*}
    \dualprod{\psi_{g}}{f}_{\hs\dual,\hs} = \lim_{n\to\infty} \scprod{g}{f_{n}}_{\hsmid} = \lim_{n\to\infty} \scprod{\iota_{-} g}{\tilde{\iota}_{+} f_{n}}_{\hsmid}
    = \dualprod{\tilde{\iota}_{+}\adjun \iota_{-} g}{f}_{\hs\dual,\hs}
  \end{equation*}
  and consequently $\psi_{g} = \tilde{\iota}_{+}\adjun\iota_{-} g$.
  Hence, $\Psi D_{-} = \tilde{\iota}_{+}\adjun\iota_{-} D_{-} = \ran \tilde{\iota}_{+}\adjun$.
\end{remark}

\begin{proposition}\label{prop:iota-closed-equivalences}
  The following assertions are equivalent.
  \begin{enumerate}[label = \textrm{\textup{(\roman*)}}]
    \item\label{item:iota-closed-equivalences-i}
      There is a Hausdorff topological vector space $(Z,\mathcal{T})$ and two continuous
      embeddings $\phi_{\hs}\colon \hs \to Z$ and $\phi_{\hsmid}\colon \hsmid \to Z$ such
      that the diagram
      \[
          \begin{tikzcd}[column sep=normal, row sep=small]
            \tilde{D}_{+}\ar[shift right,swap]{dd}{\tilde{\iota}_{+}}\ar{r}{\id}& \hs\arrow[dr,"\phi_{\hs}"] & \\
            & & Z \\
            \tilde{D}_{+}\ar[shift right,swap]{uu}{\tilde{\iota}_{+}^{-1}}\ar{r}{\id}& \hsmid\arrow[ur,"\phi_{\hsmid}"'] &
          \end{tikzcd}
      \]
      commutes.

    \item\label{item:iota-closed-equivalences-ii}
      If $\tilde{D}_{+}\ni f_n \to 0$ w.r.t.\ $\norm{\cdot}_{\hs}$ and
      $\lim_{n\to\infty} f_n$ exists w.r.t.\ $\norm{\cdot}_{\hsmid}$, then this limit
      is also $0$ and if $\tilde{D}_{+}\ni f_n \to 0$ w.r.t.\
      $\norm{\cdot}_{\hsmid}$ and $\lim_{n\to\infty} f_n$ exists w.r.t.\
      $\norm{\cdot}_{\hs}$, then this limit is also $0$.

    \item\label{item:iota-closed-equivalences-iii}
      $\tilde{\iota}_{+} \colon \tilde{D}_{+} \subseteq \hs \to \hsmid, f \mapsto f$
      is closable (as an operator) and its closure is injective.

    \item\label{item:iota-closed-equivalences-iv}
      $D_{-}$ is dense in $\hsmid$ and dense in $\hs\dual$, i.e., $\Psi D_{-}$ is dense in $\hs\dual$.
  \end{enumerate}
\end{proposition}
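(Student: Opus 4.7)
The plan is to establish the cycle (i) $\Rightarrow$ (ii) $\Leftrightarrow$ (iii) $\Leftrightarrow$ (iv) $\Rightarrow$ (i). Three of the four implications are essentially direct unpackings of definitions, of the reflexivity identity $\cl{\tilde{\iota}_{+}} = \tilde{\iota}_{+}\adjun[2]$, and of the annihilator relations for adjoints; the one genuine construction will be the passage (iii) $\Rightarrow$ (i).

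For (i) $\Rightarrow$ (ii) I would chase the diagram: if $f_{n}\in \tilde{D}_{+}$ with $f_{n}\to 0$ in $\hs$ and $f_{n}\to g$ in $\hsmid$, then $\phi_{\hs} f_{n} \to 0$ and $\phi_{\hsmid} f_{n} \to \phi_{\hsmid} g$ in $Z$; but commutativity forces $\phi_{\hs} f_{n} = \phi_{\hsmid} f_{n}$, so Hausdorffness plus injectivity of $\phi_{\hsmid}$ give $g=0$, and the symmetric case is analogous. The equivalence (ii) $\Leftrightarrow$ (iii) is just a restatement: the first condition in (ii) is exactly $\mul \cl{\tilde{\iota}_{+}}=\{0\}$ (i.e.\ closability as an operator), and the second condition is exactly $\ker \cl{\tilde{\iota}_{+}}=\{0\}$.

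For (iii) $\Leftrightarrow$ (iv) I would use $\cl{\tilde{\iota}_{+}}=\tilde{\iota}_{+}\adjun[2]$ (which holds by reflexivity of $\hs$, as noted in \Cref{le:adjoint-embedding}) together with $\mul A\adjun = (\dom A)^{\perp}$ and $\ker A\adjun = (\ran A)^{\perp}$. This gives
\[
\mul \cl{\tilde{\iota}_{+}} = (\dom \tilde{\iota}_{+}\adjun)^{\perp} = D_{-}^{\perp} \subseteq \hsmid,
\qquad
\ker \cl{\tilde{\iota}_{+}} = (\ran \tilde{\iota}_{+}\adjun)^{\perp} = (\Psi D_{-})^{\perp} \subseteq \hs,
\]
so that (iii) is equivalent to the vanishing of both annihilators, which is (iv) (using that $\Psi D_{-} = \ran \tilde{\iota}_{+}\adjun$ from \Cref{re:D-minus-iota-plus-adjun-X-plus-dual}).

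The main obstacle is (iii) $\Rightarrow$ (i), where I have to produce a space $Z$ from scratch. My plan is to take $Z \coloneqq (\hs \oplus \hsmid)/N$ with, say, the $\ell^{1}$-sum norm, and with $N$ the closure in $\hs\oplus \hsmid$ of the ``diagonal'' $\set{(f,-f)}{f\in \tilde{D}_{+}}$, then set $\phi_{\hs}(f)\coloneqq (f,0)+N$ and $\phi_{\hsmid}(g)\coloneqq (0,g)+N$. Hausdorffness of $Z$ is automatic since $N$ is closed, continuity is clear from the quotient norm, and commutativity of the diagram holds because $(f,0)-(0,f) = (f,-f) \in N$ for $f\in \tilde{D}_{+}$. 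The delicate part will be injectivity of both embeddings: an element $(f,0)\in N$ yields $f_{n}\in \tilde{D}_{+}$ with $f_{n}\to f$ in $\hs$ and $f_{n}\to 0$ in $\hsmid$, so $f\in \ker \cl{\tilde{\iota}_{+}} = \set{0}$; and an element $(0,g)\in N$ yields $f_{n}\to 0$ in $\hs$ with $f_{n}\to -g$ in $\hsmid$, so $-g\in \mul \cl{\tilde{\iota}_{+}} = \set{0}$. Both conclusions are exactly what (iii) supplies, closing the cycle.
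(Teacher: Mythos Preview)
Your argument is correct, and the implications (i) $\Rightarrow$ (ii) $\Leftrightarrow$ (iii) $\Leftrightarrow$ (iv) match the paper's essentially line by line. The genuine difference is in the construction of the ambient space $Z$. The paper proves (iv) $\Rightarrow$ (i) by setting $Y \coloneqq (D_{-},\norm{\cdot}_{\hsdual\cap\hsmid})$ and $Z \coloneqq Y\dual$, with $\phi_{\hsmid}(f)=\scprod{f}{\cdot}_{\hsmid}$ and $\phi_{\hs}(f)=\dualprod{f}{\tilde{\iota}_{+}\adjun\cdot}_{\hs,\hs\dual}$; injectivity of the two embeddings then comes precisely from the two density statements in (iv). Your route instead proves (iii) $\Rightarrow$ (i) via the pushout $Z=(\hs\oplus\hsmid)/N$ with $N$ the closure of the anti-diagonal, reading injectivity of $\phi_{\hs}$ and $\phi_{\hsmid}$ directly off $\ker\cl{\tilde{\iota}_{+}}=\set{0}$ and $\mul\cl{\tilde{\iota}_{+}}=\set{0}$.

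Your construction is arguably cleaner and more self-contained: it does not require knowing anything about $D_{-}$, and it is the categorically canonical choice. The paper's choice, however, is not arbitrary: the specific $Z=Y\dual$ is reused immediately afterwards (\Cref{le:D-plus-characterization}) to show that $D_{+}$ coincides with $\hs\cap\hsmid$ computed in \emph{that} $Z$. Your quotient $Z$ would also work for this purpose, but the argument would have to be redone rather than read off. One cosmetic point: your cycle summary says ``(iv) $\Rightarrow$ (i)'' while the construction you actually give is (iii) $\Rightarrow$ (i); this is harmless since you have (iv) $\Rightarrow$ (iii), but you may want to align the two.
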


\begin{proof}
  We will follow the strategy $\ref{item:iota-closed-equivalences-i} \Rightarrow \ref{item:iota-closed-equivalences-ii} \Rightarrow \ref{item:iota-closed-equivalences-iii} \Rightarrow \ref{item:iota-closed-equivalences-iv} \Rightarrow \ref{item:iota-closed-equivalences-i}$.
  \begin{itemize}[leftmargin=\parindent,itemindent=*,labelsep=3pt,parsep=2pt,itemsep=3pt,align=left]
    \item[$\ref{item:iota-closed-equivalences-i}\Rightarrow\ref{item:iota-closed-equivalences-ii}$:]
          Let $(f_n)_{n\in\N}$ be a sequence in $\tilde{D}_{+}$ such that $f_n \to \hat{f}$ w.r.t.\ $\hs$ and $f_n \to f$ w.r.t.\ $\hsmid$. Since $\mc T$ is coarser than both of the topologies induced by these norms, we also have
          \[
          \begin{tikzcd}[row sep=0.1em]
            & \hat{f}\\
            f_n \arrow{ur}{\mc T}\arrow{dr}{\mc T} & \\
            & f
          \end{tikzcd}
          \quad \text{in } Z.
          \]
          Since $\mc T$ is Hausdorff, we conclude $f=\hat{f}$.
          Hence, if either $\hat{f}$ or $f$ is $0$, then also the other is $0$.

    \item[$\ref{item:iota-closed-equivalences-ii}\Rightarrow\ref{item:iota-closed-equivalences-iii}$:]
          If $(f_n,f_n)_{n\in\N}$ is a
          sequence in $\tilde{\iota}_{+}$ that converges to $(0,f)\in\hs\times \hsmid$,
          then $f = 0$ by \textup{(ii)}. Hence, $\mul \cl{\tilde{\iota}_{+}} = \sset{0}$
          and consequently $\tilde{\iota}_{+}$ is closable.
          On the other hand, if $(f_{n},f_{n})_{n\in\N}$ is a sequence in
          $\tilde{\iota}_{+}$ that converges to $(f,0)$, then $f=0$ by~\ref{item:iota-closed-equivalences-ii}.
          Consequently, $\ker \cl{\tilde{\iota}_{+}} = \sset{0}$ and
          $\cl{\tilde{\iota}_{+}}$ is injective.

    \item[$\ref{item:iota-closed-equivalences-iii}\Rightarrow\ref{item:iota-closed-equivalences-iv}$:]
          We have
          $(\dom \tilde{\iota}_{+}\adjun)^{\perp} = \mul \tilde{\iota}_{+}\adjun[2] = \mul \cl{\tilde{\iota}_{+}}$.
          Since $\tilde{\iota}_{+}$ is closable, we have
          $\mul \cl{\tilde{\iota}_{+}} = \sset{0}$, which implies that
          $\dom \tilde{\iota}_{+}\adjun$ is dense in $\hsmid$. By \Cref{le:adjoint-embedding}
          $\dom \tilde{\iota}_{+}\adjun$ coincides with $D_{-}$, which gives the density of $D_{-}$ in $\hsmid$.

          The second assertion of
          \Cref{le:adjoint-embedding} yields that
          $\ran \tilde{\iota}_{+}\adjun$ is
          dense in $\hs\dual$.
          By \Cref{re:D-minus-iota-plus-adjun-X-plus-dual} we have $\ran \tilde{\iota}_{+}\adjun = \Psi D_{-}$ and therefore the density of $\Psi D_{-}$ in $\hs\dual$.

    \item[$\ref{item:iota-closed-equivalences-iv}\Rightarrow\ref{item:iota-closed-equivalences-i}$:]
          Let $Y \coloneqq D_{-}$ be equipped with
          \begin{equation*}
            \norm{g}_{Y} \coloneqq \norm{g}_{\hsdual\cap\hsmid} = \sqrt{\norm{g}_{\hsdual}^{2} + \norm{g}_{\hsmid}^{2}}.
          \end{equation*}
          We define $Z \coloneqq Y\dual$ as the (anti-)dual space of $Y$. Then we have
          \begin{align*}
            \abs{\scprod{f}{g}_{\hsmid}}
            &\leq \norm{f}_{\hsmid} \norm{g}_{\hsmid} \leq \norm{f}_{\hsmid} \norm{g}_{Y}
            &&\mspace{-5mu}\text{for}\quad f\in \hsmid, g\in Y \\
            \text{and}\quad
            \abs{\dualprod{f}{\tilde{\iota}_{+}\adjun g}_{\hs,\hs\dual}}
            &\leq \norm{f}_{\hs} \underbrace{\norm{\tilde{\iota}_{+}\adjun g}_{\hs\dual}}_{=\mathrlap{\norm{g}_{\hsdual}}}
              \leq \norm{f}_{\hs}\norm{g}_{Y}
            &&\mspace{-5mu}\text{for}\quad f\in\hs, g\in Y.
          \end{align*}
          Hence, $\phi_{\hsmid}\colon f\mapsto \scprod{f}{\cdot}_{\hsmid}$ and $\phi_{\hs}\colon f \mapsto \dualprod{f}{\tilde{\iota}_{+}\adjun \cdot}_{\hs,\hs\dual}$ are continuous mappings from $\hsmid$ and $\hs$, respectively, into $Z$. The injectivity of these mappings follows from the density of $D_{-}$ in $\hsmid$ and $D_{-}$ in $\hs\dual$ ($\tilde{\iota}_{+}\adjun D_{-}$ dense in $\hs\dual$), respectively. For $f \in \tilde{D}_{+}$ we have
          \begin{align*}
            \phi_{\hs} f = \scprod{f}{\tilde{\iota}_{+}\adjun \cdot}_{\hs,\hs\dual} = \scprod{\tilde{\iota}_{+}f}{\cdot}_{\hsmid} = \phi_{\hsmid}\circ\tilde{\iota}_{+} f
          \end{align*}
          and consequently the diagram in~\ref{item:iota-closed-equivalences-i} commutes. \qedhere
  \end{itemize}
\end{proof}

If one and therefore all assertions in \Cref{prop:iota-closed-equivalences}
are satisfied, then $\hs \cap \hsmid$ is defined as the intersection in $Z$
and complete with the norm
$\norm{\cdot}_{{\hs\cap\hsmid}} \coloneqq \sqrt{\norm{\cdot}_{\hs}^{2} + \norm{\cdot}_{\hsmid}^{2}}$.
Moreover, we define $D_{+}$ as the closure of $\tilde{D}_{+}$ in $\hs \cap \hsmid$ (w.r.t.\ $\norm{\cdot}_{\hs\cap\hsmid}$).
Note that although $\hs\cap\hsmid$ may depend on $Z$, $D_{+}$ is independent of $Z$.
We will denote the extension of $\tilde{\iota}_{+}$ to $D_{+}$ by $\iota_{+}$, which can be expressed by $\iota_{+} = \cl{\tilde{\iota}_{+}}$. The adjoint $\iota_{+}\adjun$ coincides with $\tilde{\iota}_{+}\adjun$. Also $D_{-}$ does not change, if we replace $\tilde{D}_{+}$ by $D_{+}$ in \Cref{def:Dminus} and all previous results in this section also hold for $D_{+}$ and $\iota_{+}$ instead of $\tilde{D}_{+}$ and $\tilde{\iota}_{+}$, respectively.
If $\tilde{\iota}_{+}$ is already closed, then $D_{+} = \tilde{D}_{+}$.

\begin{lemma}\label{le:D-plus-characterization}
  Let one assertion in \Cref{prop:iota-closed-equivalences} be satisfied.
  Let $Z = Y\dual$, where $Y = D_{-}$ endowed with
  $\norm{g}_{Y}\coloneqq \norm{g}_{\hsdual\cap\hsmid} = \sqrt{\norm{g}_{\hsdual}^{2} + \norm{g}_{\hsmid}^{2}}$
  (from \Cref{prop:iota-closed-equivalences}
  $\ref{item:iota-closed-equivalences-iv}\Rightarrow \ref{item:iota-closed-equivalences-i}$).
  Then we have the following characterization for $D_{+}$:
  \begin{itemize}[itemsep = 4pt, topsep = 4pt]
    \item $D_{+} = \dom \iota_{-}\adjun$,

    \item $D_{+} = \hs\cap\hsmid$ in $Y\dual$.
  \end{itemize}
\end{lemma}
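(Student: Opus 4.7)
Both claims reduce to the single relational identity $\iota_{-}\adjun = \iota_{+}^{-1}$ (with $\iota_{-}\adjun$ formed using the dual pairs $(\hsdual,\hs)$ and $(\hsmid,\hsmid)$), after which taking domains gives the first bullet and unfolding the definition of $\hs\cap\hsmid$ in $Z=Y\dual$ gives the second. One first has to check that $(\hsdual,\hs)$ really is a dual pair in the present setting: assertion \ref{item:iota-closed-equivalences-iv} of \Cref{prop:iota-closed-equivalences} yields that $\Psi D_{-}$ is dense in $\hs\dual$, and combined with the isometry of $\Psi\colon\hsdual\to\hs\dual$ from \Cref{le:adjoint-embedding} this forces $\Psi$ to be an isometric isomorphism.

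The identity $\iota_{-}\adjun=\iota_{+}^{-1}$ would be proved by writing both sides as subsets of $\hsmid\times\hs$. Reflexivity of $\hs$ (noted after \Cref{le:adjoint-embedding}) gives $\iota_{+}=\cl{\tilde{\iota}_{+}}=\tilde{\iota}_{+}\adjun[2]$; together with $\dom\tilde{\iota}_{+}\adjun=D_{-}$ and $\tilde{\iota}_{+}\adjun g = \Psi g$ for $g\in D_{-}$ (\Cref{le:adjoint-embedding} and \Cref{re:D-minus-iota-plus-adjun-X-plus-dual}), \Cref{def:adjoint-relation} yields
\begin{equation*}
  (f,h)\in\iota_{+}\ \Longleftrightarrow\ \dualprod{f}{\Psi g}_{\hs,\hs\dual}=\scprod{h}{g}_{\hsmid}\ \text{for every }g\in D_{-}.
\end{equation*}
Applying \Cref{def:adjoint-relation} directly to $\iota_{-}\colon D_{-}\subseteq\hsdual\to\hsmid$ with the chosen dual pairs gives
\begin{equation*}
  (h,f)\in\iota_{-}\adjun\ \Longleftrightarrow\ \scprod{h}{g}_{\hsmid}=\dualprod{f}{g}_{\hs,\hsdual}\ \text{for every }g\in D_{-}.
\end{equation*}
Since $\dualprod{f}{g}_{\hs,\hsdual}=\dualprod{f}{\Psi g}_{\hs,\hs\dual}$ by the defining identification, the two conditions coincide. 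Hence $\iota_{-}\adjun=\iota_{+}^{-1}$ and, taking domains, $\dom\iota_{-}\adjun=\ran\iota_{+}$, which is $D_{+}$ regarded as a subset of $\hsmid$.

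For the second bullet I would insert the explicit formulas for the embeddings from the proof of \Cref{prop:iota-closed-equivalences}\ref{item:iota-closed-equivalences-iv}$\Rightarrow$\ref{item:iota-closed-equivalences-i}: for $g\in Y=D_{-}$,
\begin{equation*}
  \phi_{\hsmid}(h)(g)=\scprod{h}{g}_{\hsmid}\quad\text{and}\quad \phi_{\hs}(f)(g)=\dualprod{f}{\tilde{\iota}_{+}\adjun g}_{\hs,\hs\dual}=\dualprod{f}{\Psi g}_{\hs,\hs\dual}.
\end{equation*}
Thus a pair $(f,h)\in\hs\times\hsmid$ represents an element of $\hs\cap\hsmid\subseteq Y\dual$ exactly when $\phi_{\hs}(f)=\phi_{\hsmid}(h)$ in $Y\dual$, and this equation is literally the one characterising $\iota_{+}$ in the previous paragraph. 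Consequently the intersection coincides with the graph of $\iota_{+}$, i.e.\ with $D_{+}$. The only genuinely non-clerical point of the entire argument is the reflexivity-based identity $\cl{\tilde{\iota}_{+}}=\tilde{\iota}_{+}\adjun[2]$; everything else is disciplined bookkeeping of the two dual pairs and of the isometric isomorphism $\Psi\colon\hsdual\to\hs\dual$.
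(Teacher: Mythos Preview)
Your argument is correct. Both bullets hinge on the same reflexivity identity $\iota_{+}=\cl{\tilde{\iota}_{+}}=\tilde{\iota}_{+}\adjun[2]$ that the paper uses, so at the conceptual level the proofs agree; but your packaging is different in two respects worth noting.

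For the first bullet, the paper stays with the dual pair $(\hs,\hs\dual)$ throughout and argues only at the level of domains: it rewrites the continuity condition defining $\dom\iota_{-}\adjun$ by substituting $h=\iota_{+}\adjun\iota_{-}g$ (using the isometry of $\iota_{+}\adjun\iota_{-}$ from \Cref{le:adjoint-embedding}), arriving at $\dom((\iota_{+}\adjun)^{-1})\adjun=\dom\iota_{+}^{-1}=D_{+}$. Your route instead promotes $(\hsdual,\hs)$ to a dual pair up front (essentially anticipating \Cref{th:hs-hsdual-dual-pairing}, which is harmless since that theorem does not rely on the present lemma) and then matches the defining conditions of $\iota_{-}\adjun$ and $\iota_{+}^{-1}$ as relations. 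This buys you the sharper identity $\iota_{-}\adjun=\iota_{+}^{-1}$, at the cost of the preliminary check that $\Psi$ is onto.

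For the second bullet the difference is more substantial. The paper introduces $P_{+}\coloneqq\hs\cap\hsmid$, builds the corresponding $P_{-}$ as in \Cref{def:Dminus}, shows $\norm{\cdot}_{P_{-}}=\norm{\cdot}_{\hsdual}$ (hence $P_{-}=D_{-}$), and deduces $\iota_{P_{+}}=\cl{\tilde{\iota}_{+}}$. Your argument bypasses $P_{-}$ entirely: unfolding $\phi_{\hs}(f)=\phi_{\hsmid}(h)$ in $Y\dual$ gives exactly the double-adjoint characterisation of $\iota_{+}$ already obtained, so $\hs\cap\hsmid$ is literally the graph of $\iota_{+}$. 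This is shorter and makes transparent why the particular choice $Z=Y\dual$ matters; the paper's detour through $P_{-}$ has the advantage of not relying on the specific embeddings $\phi_{\hs},\phi_{\hsmid}$ and would adapt to other ambient spaces $Z$.
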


\begin{proof}
  Note that for $g \in D_{-}$ we have $g = (\iota_{+}\adjun)^{-1}\iota_{+}\adjun g$ and that $\iota_{+}\adjun\iota_{-}$ is isometric from $D_{-} = \dom \iota_{-} \subseteq \hsdual$ onto $\ran \iota_{+}\adjun = \dom (\iota_{+}\adjun)^{-1}\subseteq \hs\dual$. The following equivalences show the first assertion:
  \begin{align*}
    f \in \dom \iota_{-}\adjun
    &\Leftrightarrow D_{-} \ni g \mapsto \scprod{f}{\iota_{-} g}_{\hsmid}
      \text{ is continuous w.r.t.\ } \norm{\cdot}_{\hsdual}
    \\
    &\Leftrightarrow D_{-} \ni g \mapsto \scprod{f}{(\iota_{+}\adjun)^{-1}\iota_{+}\adjun \iota_{-} g}_{\hsmid}
      \text{ is continuous w.r.t.\ } \norm{\cdot}_{\hsdual}
    \\
    &\Leftrightarrow  \dom (\iota_{+}\adjun)^{-1} \ni h \mapsto \scprod{f}{(\iota_{+}\adjun)^{-1}h}_{\hsmid}
      \text{ is continuous w.r.t.\ } \norm{\cdot}_{\hs\dual}
    \\
    &\Leftrightarrow f \in \dom \big((\iota_{+}\adjun)^{-1}\big)\adjun = \dom \iota_{+}^{-1} = \ran \iota_{+} = D_{+}.
  \end{align*}

  For the second characterization we define $P_{+} \coloneqq \hs \cap \hsmid$ and we define $P_{-}$ analogously to $D_{-}$ in \Cref{def:Dminus}:
  \begin{equation*}
    \norm{g}_{P_{-}} \coloneqq \sup_{f\in P_{+}\setminus \sset{0}}\frac{\abs{\scprod{g}{f}_{\hsmid}}}{\norm{f}_{\hs}}
    \quad\text{and}\quad
    P_{-} \coloneqq \dset{g \in \hsmid}{\norm{g}_{P_{-}} < +\infty}.
  \end{equation*}
  Clearly, $\norm{g}_{\hsdual} \leq \norm{g}_{P_{-}}$ for $g \in P_{-}$ and consequently $P_{-} \subseteq D_{-}$.
  Furthermore, we can define $\iota_{P_{+}}\colon P_{+}\subseteq \hs \to \hsmid, f \mapsto f$ analogously to $\tilde{\iota}_{+}$.
  Note that $\iota_{P_{+}}$ is closed due the completeness of $(\hs\cap\hsmid,\norm{\cdot}_{\hs\cap\hsmid})$.
  Then we have $\dom \iota_{P_{+}}\adjun = P_{-}$ and $\tilde{\iota}_{+} \subseteq \iota_{P_{+}}$ and therefore $\iota_{P_{+}}\adjun \subseteq \tilde{\iota}_{+}\adjun$.
  For $g\in D_{-}$ and $f \in P_{+}$ we have, by definition of $P_{+} = \hs\cap\hsmid$ in $Z$,
  \begin{equation*}
    \abs{\scprod{g}{f}_{\hsmid}}
    = \abs{\scprod{\tilde{\iota}_{+}\adjun g}{f}_{\hs\dual,\hs}}
    \leq \norm{\tilde{\iota}_{+}\adjun g}_{\hs\dual} \norm{f}_{\hs}
    = \norm{g}_{\hsdual}\norm{f}_{\hs},
  \end{equation*}
  which yields $\norm{g}_{P_{-}} \leq \norm{g}_{\hsdual}$. Hence, $P_{-} = D_{-}$,
  $\iota_{P_{+}}\adjun = \tilde{\iota}_{+}\adjun$ and $\iota_{P_{+}} = \cl{\tilde{\iota}_{+}}$, which is equivalent to $P_{+}= \hs\cap\hsmid = \cl[\hs\cap\hsmid]{\tilde{D}_{+}} = D_{+}$.
\end{proof}

\begin{theorem}\label{th:hs-hsdual-dual-pairing}
  Let one assertion in \Cref{prop:iota-closed-equivalences} be satisfied. Then the continuous extension of $\iota_{+}\adjun \iota_{-}$ denoted by $\cl{\iota_{+}\adjun \iota_{-}}$ equals $\Psi$. Moreover, $\Psi$ is surjective and $(\hs,\hsdual)$ is a complete dual pair with
  \begin{equation*}
    \dualprod{g}{f}_{\hsdual,\hs} \coloneqq \dualprod{\Psi g}{f}_{\hs\dual,\hs}.
  \end{equation*}
\end{theorem}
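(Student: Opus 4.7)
My plan is to identify $\Psi$ with the continuous extension of $\iota_{+}\adjun\iota_{-}$, then exploit isometry plus density of the range to obtain surjectivity, and finally to use the reflexivity of $\hs$ to upgrade the dual pair structure to a complete dual pair. The only arguments that are not bookkeeping are (i) the density step and (ii) verifying completeness via reflexivity.

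First I would observe that by the discussion following \Cref{prop:iota-closed-equivalences} the adjoint $\iota_{+}\adjun$ coincides with $\tilde{\iota}_{+}\adjun$, so \Cref{re:D-minus-iota-plus-adjun-X-plus-dual} already gives $\Psi g = \iota_{+}\adjun\iota_{-} g$ for every $g \in D_{-}$. Because $D_{-}$ is dense in $\hsdual$ by construction and $\iota_{+}\adjun\iota_{-}\colon D_{-}\subseteq \hsdual \to \hs\dual$ is isometric by \Cref{le:adjoint-embedding}, its continuous extension to $\hsdual$ exists and is unique; since $\Psi$ is already a continuous extension, I conclude $\cl{\iota_{+}\adjun\iota_{-}} = \Psi$ and $\Psi\colon \hsdual \to \hs\dual$ is isometric.

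Next I would establish surjectivity of $\Psi$. The range of $\iota_{+}\adjun\iota_{-}$ equals $\iota_{+}\adjun(\dom \iota_{+}\adjun) = \ran \iota_{+}\adjun$, which is dense in $\hs\dual$ by \Cref{prop:iota-closed-equivalences}~\ref{item:iota-closed-equivalences-iv} together with \Cref{re:D-minus-iota-plus-adjun-X-plus-dual}. Since $\hsdual$ is complete, its isometric image $\Psi\hsdual \supseteq \ran \iota_{+}\adjun$ is closed in $\hs\dual$; being closed and dense it equals $\hs\dual$. Thus $\Psi$ is an isometric bijection. Sesquilinearity and continuity of $\dualprod{g}{f}_{\hsdual,\hs}\coloneqq \dualprod{\Psi g}{f}_{\hs\dual,\hs}$ are then immediate from the corresponding properties of the canonical pairing between $\hs$ and $\hs\dual$, so $(\hs,\hsdual)$ is an (anti-)dual pair in the sense of \Cref{def:dual-pair}.

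For completeness I would check that the companion map $\Phi\colon \hs \to \hsdual\dual$, $f \mapsto \conj{\dualprod{\cdot}{f}_{\hsdual,\hs}}$, is also an isometric bijection. The key trick is the factorisation $\Phi = \Psi\dual \circ J$, where $J\colon \hs \to \hs\dual\dual$ is the canonical embedding into the anti-bidual and $\Psi\dual\colon \hs\dual\dual \to \hsdual\dual$ is the (anti-)dual of $\Psi$. Reflexivity of $\hs$ makes $J$ an isometric bijection, and since $\Psi$ is an isometric bijection so is $\Psi\dual$; hence $\Phi$ is too. The main obstacle I anticipate is precisely this last step: one has to be careful with the anti-dual conventions to verify that $\Phi = \Psi\dual \circ J$ really holds as written with the chosen sesquilinearity, and that reflexivity of a Banach space indeed delivers a canonical isometric bijection into the \emph{anti}-bidual. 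Apart from that, the proof is a chain of routine verifications.
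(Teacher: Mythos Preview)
Your proof is correct and follows essentially the same route as the paper: identify $\Psi$ with $\iota_{+}\adjun\iota_{-}$ on $D_{-}$ via \Cref{re:D-minus-iota-plus-adjun-X-plus-dual}, extend by density, and conclude surjectivity from the closedness of an isometric range together with the density of $\Psi D_{-}=\ran\iota_{+}\adjun$ given by \Cref{prop:iota-closed-equivalences}\,\ref{item:iota-closed-equivalences-iv}. The paper simply asserts the final ``complete dual pair'' step, whereas you spell it out via the factorisation $\Phi=\Psi\dual\circ J$ and reflexivity of $\hs$; this extra care is harmless and your verification is valid.
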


\begin{proof}
  We have already shown, that $\iota_{+}\adjun \iota_{-} g = \Psi g$ for $g \in D_{-}$.
  Since $D_{-}$ is dense in $\hsdual$, we also have $\cl{\iota_{+}\adjun \iota_{-}} g = \Psi g$ for $g \in \hsdual$.

  If one assertion in \Cref{prop:iota-closed-equivalences} is true, then all of them are true.
  Hence, $\Psi D_{-}$ is dense in $\hs\dual$ and because $\Psi$ is isometric $\ran \Psi$ is closed and therefore $\ran \Psi = \hs\dual$.

  Since $\Psi$ is an isomorphism between $\hsdual$ and $\hs\dual$, it immediately follows that $(\hs,\hsdual)$ is a complete dual pair with the dual pairing $\dualprod{\cdot}{\cdot}_{\hsdual,\hs}$.
\end{proof}

\begin{remark}
  For $f \in D_{+}$ and $g\in D_{-}$ we have
  \begin{align*}
    \dualprod{g}{f}_{\hsdual,\hs} = \dualprod{\Psi g}{f}_{\hs\dual,\hs}
    = \dualprod{\iota_{+}\adjun \iota_{-}g}{f}_{\hs\dual,\hs}
    = \scprod{\iota_{-}g}{\iota_{+}f}_{\hsmid} = \scprod{g}{f}_{\hsmid}.
  \end{align*}
  Since these two sets are dense in $\hs$ and $\hsdual$ respectively, we have for $f\in \hs$ and $g\in\hsdual$
  \begin{align*}
    \dualprod{g}{f}_{\hsdual,\hs}
    =
    \lim_{(n,m) \to (\infty,\infty)}
    \scprod{g_{n}}{f_{m}}_{\hsmid},
  \end{align*}
  where $(f_{m})_{m\in\N}$ is a sequence in $D_{+}$ that converges to $f$ in
  $\hs$ and $(g_{n})_{n\in\N}$ is a sequence in $D_{-}$ that converges to $g$ in
  $\hsdual$.
\end{remark}

\section{Definition and Results}

The previous section leads to the following definition.

\begin{definition}\label{def:quasi-gelfand-triple}
  Let $(\hs,\hsdual)$ be a complete dual pair and $\hsmid$ be a Hilbert space.
  Furthermore, let $\iota_{+}\colon \dom \iota_{+} \subseteq \hs \to \hsmid$ and $\iota_{-}\colon \dom \iota_{-}\subseteq \hsdual \to \hsmid$ be densely defined, closed, and injective linear mappings with dense range.
  We call $(\hs,\hsmid,\hsdual)$ a \emph{pre-quasi Gelfand triple}, if
  \begin{equation}\label{eq:quasi-gelfand-triple-condition}
    \dualprod{g}{f}_{\hsdual,\hs} = \scprod{\iota_{-} g}{\iota_{+} f}_{\hsmid}
  \end{equation}
  for all $f \in \dom \iota_{+}$ and $g\in\dom\iota_{-}$. The space $\hsmid$ will be referred as \emph{pivot space}.

  If we additionally have $\dom \iota_{+}\adjun = \ran \iota_{-}$, then we call $(\hs,\hsmid,\hsdual)$ a \emph{quasi Gelfand triple}.
\end{definition}

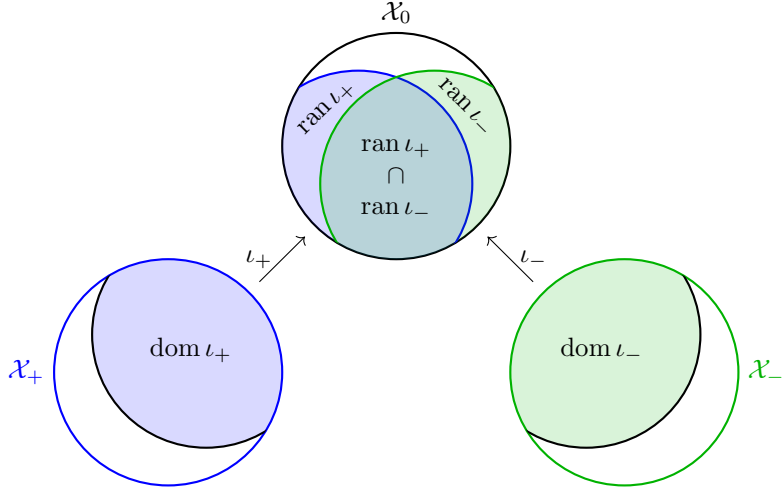
\begin{figure}
  \centering
  \begin{tikzpicture}
    \coordinate (v) at (0.5,0.5);
    \coordinate (w) at (-0.5,0.5);

    \coordinate (p0) at (0,0);
    \coordinate (p1) at (-3,-3);
    \coordinate (p2) at (3,-3);

    \draw[thick] (p0) circle [radius=1.5];
    \node[above] at ($(p0) + (0,1.5)$) {$\hsmid$};
    \begin{scope}
      \clip (p0) circle (1.5);
      \fill[blue,opacity=0.15] ($(p0)-(v)$) circle (1.5);
      \draw[blue,thick] ($(p0)-(v)$) circle (1.5);
    \end{scope}
    \begin{scope}
      \clip (p0) circle (1.5);
      \fill[black!30!green,opacity=0.15] ($(p0)-(w)$) circle (1.5);
      \draw[black!30!green,thick] ($(p0)-(w)$) circle (1.5);
    \end{scope}

    \node at ($(p0) + (-0.9,0.5)$) [rotate=45] {$\ran \iota_{+}$};
    \node at ($(p0) + (0.9,0.5)$) [rotate=-45] {$\ran \iota_{-}$};
    \node at ($(p0) + (0,-0.4)$) {\begin{tabular}{c}$\ran\iota_{+}$ \\ $\cap$ \\ $\ran \iota_{-}$\end{tabular}};

    \draw[blue,thick] (p1) circle [radius=1.5];
    \node[left,blue] at ($(p1) + (-1.5,0)$) {$\hs$};
    \begin{scope}
      \clip (p1) circle (1.5);
      \fill[blue,opacity=0.15] ($(p1)+(v)$) circle (1.5);
      \draw[thick] ($(p1)+(v)$) circle (1.5);
    \end{scope}
    \node at ($(p1)+(0.3,0.3)$) {$\dom \iota_{+}$};

    \draw[->] ($(p1) + 1.2*(1,1)$) -- node [left] {$\iota_{+}$} ($(p0) - 1.2*(1,1)$);

    \draw[thick,black!30!green] (p2) circle [radius=1.5];
    \node[right,black!30!green] at ($(p2) + (1.5,0)$) {$\hsdual$};
    \begin{scope}
      \clip (p2) circle (1.5);
      \fill[black!30!green,opacity=0.15] ($(p2)+(w)$) circle (1.5);
      \draw[thick] ($(p2)+(w)$) circle (1.5);
    \end{scope}
    \node at ($(p2)+(-0.3,0.3)$) {$\dom \iota_{-}$};

    \draw[->] ($(p2) + 1.2*(-1,1)$) -- node [right] {$\iota_{-}$}($(p0) + 1.2*(1,-1)$);

  \end{tikzpicture}
  \caption{Illustration of a quasi Gelfand triple}%
  \label{fig:quasi-gelfand-triple-abstract}
\end{figure}

\Cref{fig:quasi-gelfand-triple-abstract} illustrates the setting of a quasi Gelfand triple.
Contrary to the previous section we will regard the adjoint of $\iota_{+}$ and $\iota_{-}$ with respect to the complete dual pairs $(\hs,\hsdual)$ and $(\hsmid,\hsmid)$. Therefore, $\iota_{+}\adjun$ is a densely defined operator from $\hsmid$ to $\hsdual$ and $\iota_{-}\adjun$ is a densely defined operator from $\hsmid$ to $\hs$. We could not do this before, because we did not know from the beginning that $(\hs,\hsdual)$ is a complete dual pair.

\begin{example}
  Let $\hs = \Lp{p}(\R)$, $\hsdual = \Lp{q}(\R)$ and $\hsmid = \Lp{2}(\R)$, where $p\in (1,+\infty)$ and $\frac{1}{p} + \frac{1}{q} = 1$. Then $(\hs,\hsdual)$ is a complete dual pair. Note that $\Lp{p}(\R) \cap \Lp{2}(\R)$ is already well-defined. We can define
  \begin{align*}
    \iota_{+}&\colon
    \mapping{\Lp{p}(\R) \cap \Lp{2}(\R) \subseteq \Lp{p}(\R)}{\Lp{2}(\R)}{f}{f,}
    \\\mathllap{\text{and}\quad}
    \iota_{-}&\colon
    \mapping{\Lp{q}(\R) \cap \Lp{2}(\R) \subseteq \Lp{q}(\R)}{\Lp{2}(\R)}{g}{g.}
  \end{align*}
  These mapping are densely defined, injective and closed with dense range. By definition of the dual pairing of $(\Lp{p}(\R),\Lp{q}(\R))$ we have
  \begin{equation*}
    \dualprod{g}{f}_{\Lp{q}(\R),\Lp{p}(\R)} = \int_{\R} g\conj{f} \dx[\uplambda] = \scprod{g}{f}_{\hsmid}
    = \scprod{\iota_{-}g}{\iota_{+}f}_{\hsmid}
  \end{equation*}
  for $g \in \Lp{q}(\R) \cap \Lp{2}(\R)$ and $f \in \Lp{p}(\R) \cap \Lp{2}(\R)$. By the H{\"o}lder inequality we also have $\dom \iota_{+}\adjun = \ran \iota_{-}$. Hence, $(\Lp{p}(\R),\Lp{2}(\R),\Lp{q}(\R))$ is a quasi Gelfand triple.
\end{example}

Note that the mapping $\iota_{+}$ gives us an identification of $\dom \iota_{+}$ and $\ran \iota_{+}$. Hence, we can introduce the norm of $\hs$ on $\ran \iota_{+}$ by
$\norm{f}_{\hs} = \norm{\iota_{+}^{-1} f}_{\hs}$ for $f \in \ran \iota_{+}$.
Then the completion of $\ran \iota_{+}$ with respect to $\norm{\cdot}_{\hs}$ is isometrically isomorphic to $\hs$. Accordingly, we can do the same for $\hsdual$. This justifies the following definition and \Cref{fig:quasi-gelfand-triple}.

\begin{definition}\label{def:intersections-of-members-of-quasi-Gelfand-triple}
  For a quasi Gelfand triple $(\hs,\hsmid,\hsdual)$ we define
  \begin{equation*}
    \hs \cap \hsmid \coloneqq \ran \iota_{+}
    \quad\text{and}\quad
    \hsdual \cap \hsmid \coloneqq \ran \iota_{-}.
  \end{equation*}
\end{definition}

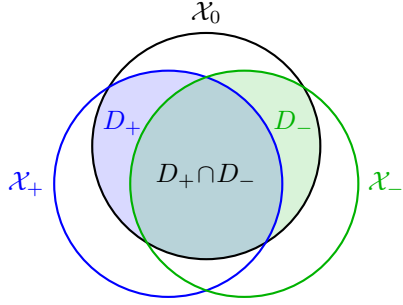
\begin{figure}
  \centering
  \begin{tikzpicture}
    \begin{scope}
      \clip (-0.5,-0.5) circle (1.5);
      \fill[blue,opacity=0.15] (0,0) circle (1.5);
    \end{scope}
    \begin{scope}
      \clip (0.5,-0.5) circle (1.5);
      \fill[black!30!green,opacity=0.15] (0,0) circle (1.5);
    \end{scope}
    \draw[thick](0,0) circle [radius=1.5];
    \draw[thick,blue](-0.5,-0.5) circle [radius=1.5];
    \draw[thick,black!30!green](0.5,-0.5) circle [radius=1.5];
    \node[left,blue] at (-2,-0.5) {$\hs$};
    \node[right,black!30!green] at (2,-0.5) {$\hsdual$};
    \node[above] at (0,1.5) {$\hsmid$};
    \node[blue] at (-1.1,0.3) {${D}_{+}$};
    \node[black!30!green] at (1.15,0.3) {$D_{-}$};
    \node at (0,-0.4) {${D}_{+}\!\cap\! D_{-}$};
  \end{tikzpicture}
  \caption{Illustration of a quasi Gelfand triple, where $D_{+} = \ran \iota_{+}$
    and $D_{-} = \ran \iota_{-}$.}%
  \label{fig:quasi-gelfand-triple}
\end{figure}

If either $\iota_{+}$ or $\iota_{-}$ is continuous, then a quasi Gelfand triple is an ``ordinary'' Gelfand triple.
Clearly, every ``ordinary'' Gelfand triple is also a quasi Gelfand triple.

The additional condition $\dom \iota_{+}\adjun = \ran \iota_{-}$ that makes a pre-quasi Gelfand triple a quasi Gelfand triple is not crucial as it can always be forced, which we will see later in \Cref{le:extension-of-iota-minus-to-fit-def}.
In \Cref{con:weak,con:strong} we ask ourselves, if this condition is automatically fulfilled.
Moreover, the next lemma shows that we can also ask for the converse condition $\dom \iota_{-}\adjun = \ran \iota_{+}$ instead.
Note that from~\eqref{eq:quasi-gelfand-triple-condition} we can immediately see that $\dom \iota_{+}\adjun \supseteq \ran \iota_{-}$ and $\dom \iota_{-}\adjun \supseteq \ran \iota_{+}$. Hence, for $f \in \dom \iota_{+}$ and $g \in \dom \iota_{-}$ we have
\begin{equation}\label{eq:iota-plus-adjun-equals-iota-minus-inverse}
  \dualprod{g}{f}_{\hsdual,\hs} = \scprod{\iota_{-} g}{\iota_{+} f}_{\hsmid}
  =
  \begin{cases}
    \dualprod{\iota_{+}\adjun \iota_{-} g}{f}_{\hsdual,\hs}, \\
    \dualprod{g}{\iota_{-}\adjun \iota_{+} f}_{\hsdual,\hs},
  \end{cases}
\end{equation}
which implies $\iota_{+}\adjun \iota_{-} g = g$ and $\iota_{-}\adjun \iota_{+} f = f$.

\begin{lemma}\label{le:iota-adjun-dom}
  Let $(\hs,\hsmid,\hsdual)$ be a pre-quasi Gelfand triple with the embeddings $\iota_{+}$ and $\iota_{-}$.
  Then
  \begin{align*}
    \dom \iota_{+}\adjun = \ran \iota_{-}
    \quad\Leftrightarrow\quad
    \dom \iota_{-}\adjun = \ran \iota_{+}.
  \end{align*}
  In particular, if $(\hs,\hsmid,\hsdual)$ is a quasi Gelfand triple, then also $\dom \iota_{-}\adjun = \ran \iota_{+}$ holds true.
\end{lemma}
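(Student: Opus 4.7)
The plan is to exploit the identities in equation \eqref{eq:iota-plus-adjun-equals-iota-minus-inverse} together with the biduality $\iota_{\pm}\adjun[2] = \iota_{\pm}$ (which follows from closedness of $\iota_{\pm}$ and the fact that $(\hs,\hsdual)$ and $(\hsmid,\hsmid)$ are complete dual pairs, as already invoked in \Cref{le:adjoint-embedding}). Both implications in the claim are symmetric, so I will only describe ``$\Rightarrow$''.

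First I would rewrite \eqref{eq:iota-plus-adjun-equals-iota-minus-inverse} in relational form. The identity $\iota_{+}\adjun \iota_{-}g = g$ for every $g\in\dom\iota_{-}$ says exactly that the operator $\iota_{+}\adjun$ extends $\iota_{-}^{-1}$; that is, the linear relation $\iota_{-}^{-1}$ is contained in $\iota_{+}\adjun$. Analogously, $\iota_{-}\adjun$ extends $\iota_{+}^{-1}$. Assuming now that $\dom \iota_{+}\adjun = \ran \iota_{-} = \dom \iota_{-}^{-1}$, I obtain the equality of relations
\[
  \iota_{+}\adjun = \iota_{-}^{-1}.
\]

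Next I would take adjoints of both sides with respect to the underlying complete dual pairs. The key general fact I would invoke (or quickly verify) is that for a closed, densely defined, injective operator $T$ with dense range between two complete dual pairs, $(T^{-1})\adjun = (T\adjun)^{-1}$: indeed, the defining equivalence $\dualprod{y}{T^{-1}u} = \dualprod{x}{u}$ for $u\in\ran T$ is, after setting $u = Tv$, just $\dualprod{y}{v} = \dualprod{x}{Tv}$ for $v\in \dom T$, which says $[x,y]\in T\adjun$, equivalently $[y,x]\in (T\adjun)^{-1}$. Applied to $T=\iota_{-}$ (which is closed, densely defined, injective and has dense range by assumption), this yields $(\iota_{-}^{-1})\adjun = (\iota_{-}\adjun)^{-1}$.

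Combining the previous two steps, and using $\iota_{+}\adjun[2] = \iota_{+}$ because $\iota_{+}$ is closed, I get
\[
  \iota_{+} \;=\; \iota_{+}\adjun[2] \;=\; (\iota_{-}^{-1})\adjun \;=\; (\iota_{-}\adjun)^{-1},
\]
and inverting gives $\iota_{-}\adjun = \iota_{+}^{-1}$. In particular
\[
  \dom \iota_{-}\adjun \;=\; \dom \iota_{+}^{-1} \;=\; \ran \iota_{+},
\]
which is the desired conclusion. The reverse implication follows by interchanging the roles of $\iota_{+}$ and $\iota_{-}$; the ``In particular'' statement is then immediate from the definition of a quasi Gelfand triple. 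I do not expect any real obstacle here: everything reduces to the bookkeeping of adjoints/inverses of closed operators between complete dual pairs, which is already available in the paper's framework; the only point needing a line of justification is $(T^{-1})\adjun = (T\adjun)^{-1}$, which is a short direct computation from \Cref{def:adjoint-relation}.
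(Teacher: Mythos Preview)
Your proof is correct and follows essentially the same route as the paper. The paper also derives (implicitly) the equality $\iota_{+}\adjun = \iota_{-}^{-1}$ from the hypothesis and then uses $((\iota_{+}\adjun)^{-1})\adjun = \iota_{+}^{-1}$, reaching $\dom \iota_{-}\adjun = \ran \iota_{+}$; you make the relation identity explicit and invoke $(T^{-1})\adjun = (T\adjun)^{-1}$ together with biduality, which is the same algebraic content packaged slightly more cleanly.
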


The proof of this is basically the first part of the proof of \Cref{le:D-plus-characterization}.

\begin{proof}

  Let $\dom \iota_{+}\adjun  = \ran \iota_{-}$.
  The following equivalences
  \begin{align*}
    f \in \dom \iota_{-}\adjun
    &\Leftrightarrow
      \dom\iota_{-} \ni g \mapsto \scprod{f}{\iota_{-} g}_{\hsmid}
      \text{ is continuous w.r.t.\ } \norm{\cdot}_{\hsdual}
    \\
    &\Leftrightarrow
      \dom\iota_{-} \ni g \mapsto \scprod{f}{(\iota_{+}\adjun)^{-1}\underbrace{\iota_{+}\adjun \iota_{-} g}_{=\mathrlap{g}}}_{\hsmid}
      \text{ is continuous w.r.t.\ } \norm{\cdot}_{\hsdual}
      \\
    &\Leftrightarrow
      f \in \dom \big((\iota_{+}\adjun)^{-1}\big)\adjun = \dom \iota_{+}^{-1} = \ran \iota_{+}
  \end{align*}
  imply $\dom \iota_{-}\adjun = \ran \iota_{+}$.

  The other implication follows  analogously.
\end{proof}

In contrast to ``ordinary'' Gelfand triple, the setting for quasi Gelfand triple is somehow ``symmetric'', i.e., the roles of $\hs$ and $\hsdual$ are interchangeable, since neither of the embeddings $\iota_{+}$ and $\iota_{-}$ has to be continuous, as indicated in the beginning of this section.

\begin{lemma}
  \label{le:extension-of-iota-minus-to-fit-def}
  Let $(\hs,\hsmid,\hsdual)$ be a pre-quasi Gelfand triple with the embeddings $\iota_{+}$ and $\iota_{-}$.
  Then there exists an extension $\hat{\iota}_{-}$ of $\iota_{-}$ that respects~\eqref{eq:quasi-gelfand-triple-condition} and satisfies $\dom \iota_{+}\adjun = \ran \hat{\iota}_{-}$.
  In particular, $(\hs,\hsmid,\hsdual)$ with $\iota_{+}$ and $\hat{\iota}_{-}$ forms a quasi Gelfand triple.
\end{lemma}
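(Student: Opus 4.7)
The plan is to simply define $\hat{\iota}_{-}$ as the inverse of $\iota_{+}\adjun$ (with respect to the dual pairs $(\hsmid,\hsmid)$ and $(\hs,\hsdual)$) and then to verify one-by-one the four structural properties required in \Cref{def:quasi-gelfand-triple}, using only facts already established in the excerpt or general facts about adjoint relations in dual pairs.

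First I would collect the relevant properties of $\iota_{+}\adjun$. Since $\iota_{+}$ is closed, densely defined, injective and has dense range, the adjoint $\iota_{+}\adjun$ is automatically closed, single-valued (because $\mul \iota_{+}\adjun = (\dom \iota_{+})^{\perp} = \set{0}$), and injective (because $\ker \iota_{+}\adjun = (\ran \iota_{+})^{\perp} = \set{0}$). Moreover, $\ran \iota_{+}\adjun$ is dense in $\hsdual$, because $(\ran \iota_{+}\adjun)^{\perp} = \ker \iota_{+}\adjun[2] = \ker \iota_{+} = \set{0}$, using that $\iota_{+}\adjun[2] = \iota_{+}$ for a closed operator between reflexive (here a dual pair) settings. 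Finally, from~\eqref{eq:iota-plus-adjun-equals-iota-minus-inverse} we already know $\ran \iota_{-} \subseteq \dom \iota_{+}\adjun$ and $\iota_{+}\adjun \iota_{-} g = g$ for $g\in \dom \iota_{-}$; in particular $\dom \iota_{+}\adjun$ is dense in $\hsmid$ (since $\ran \iota_{-}$ is).

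Next I set $\hat{\iota}_{-} \coloneqq (\iota_{+}\adjun)^{-1}$, so $\dom \hat{\iota}_{-} = \ran \iota_{+}\adjun \subseteq \hsdual$ and $\ran \hat{\iota}_{-} = \dom \iota_{+}\adjun \subseteq \hsmid$. Since $\iota_{+}\adjun$ is closed and injective, $\hat{\iota}_{-}$ is a closed operator; it is densely defined because $\ran \iota_{+}\adjun$ is dense in $\hsdual$; it is injective since the original $\iota_{+}\adjun$ is an operator; and its range $\dom \iota_{+}\adjun$ is dense in $\hsmid$. The identity $\iota_{+}\adjun \iota_{-} g = g$ shows that for $g\in\dom\iota_{-}$ we have $g\in\ran \iota_{+}\adjun$ and $\hat{\iota}_{-}g = \iota_{-}g$, i.e.\ $\hat{\iota}_{-}$ genuinely extends $\iota_{-}$.

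Finally, I verify~\eqref{eq:quasi-gelfand-triple-condition}: for $f\in\dom\iota_{+}$ and $g\in\dom\hat{\iota}_{-}$, write $h \coloneqq \hat{\iota}_{-} g \in \dom \iota_{+}\adjun$, so that $\iota_{+}\adjun h = g$ and
\begin{equation*}
  \scprod{\hat{\iota}_{-} g}{\iota_{+} f}_{\hsmid}
  = \scprod{h}{\iota_{+} f}_{\hsmid}
  = \dualprod{\iota_{+}\adjun h}{f}_{\hsdual,\hs}
  = \dualprod{g}{f}_{\hsdual,\hs}.
\end{equation*}
By construction $\ran \hat{\iota}_{-} = \dom \iota_{+}\adjun$, so $(\hs,\hsmid,\hsdual)$ with $\iota_{+}$ and $\hat{\iota}_{-}$ is a quasi Gelfand triple. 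There is no real obstacle here; the only mildly subtle point is the density of $\ran \iota_{+}\adjun$ in $\hsdual$, which requires the complete-dual-pair version of the double-adjoint identity $\iota_{+}\adjun[2] = \iota_{+}$, but this is standard.
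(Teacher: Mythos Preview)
Your proposal is correct and follows essentially the same approach as the paper: define $\hat{\iota}_{-} \coloneqq (\iota_{+}\adjun)^{-1}$, observe that $\iota_{+}\adjun \iota_{-} g = g$ forces $\iota_{-} \subseteq \hat{\iota}_{-}$, and then check~\eqref{eq:quasi-gelfand-triple-condition} directly. You are actually more thorough than the paper in that you explicitly verify the four structural properties (closed, densely defined, injective, dense range) needed for $\hat{\iota}_{-}$ to make the triple a genuine quasi Gelfand triple, whereas the paper leaves these implicit.
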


\begin{proof}
  Note that $\iota_{+}\adjun \iota_{-} g = g$. Hence, $\iota_{+}\adjun \supseteq \iota_{-}^{-1}$ and $(\iota_{+}\adjun)^{-1} \supseteq \iota_{-}$. We define $\hat{\iota}_{-}$ as $(\iota_{+}\adjun)^{-1}$.
  Then clearly $\ran \hat{\iota}_{-} = \dom \iota_{+}\adjun$.
  For $f \in \dom \iota_{+}$ and $g \in \dom \hat{\iota}_{-}$ we have
  \begin{equation*}
    \scprod{\hat{\iota}_{-} g}{\iota_{+}f}_{\hsmid} = \dualprod{\iota_{+}\adjun \tilde{\iota}_{-}g}{f}_{\hsdual,\hs} = \dualprod{g}{f}_{\hsdual,\hs}.
    \ifSn{\tag*{\qedhere}}{\qedhere}
  \end{equation*}
\end{proof}

Alternatively, we could have extended $\iota_{+}$ by setting $\hat{\iota}_{+} \coloneqq (\iota_{-}\adjun)^{-1}$ in the previous lemma to obtain a quasi Gelfand triple.

\begin{remark}\label{re:quasi-gelfand-triple-different-dual-pair}
  If $(\hs,\hsmid,\hsdual)$ is a quasi Gelfand triple and $(\hs,\widetilde{\hsdual})$ is another dual pair for $\hs$, then also $(\hs,\hsmid,\widetilde{\hsdual})$ is a quasi Gelfand triple.
\end{remark}

\begin{lemma}\label{le:iota-plus-adjun=iota-minus-inverse}
  Let $(\hs,\hsmid,\hsdual)$ be a quasi Gelfand triple. Then
  \begin{equation*}
    \iota_{+}\adjun = \iota_{-}^{-1}
    \quad\text{and}\quad
    \iota_{-}\adjun = \iota_{+}^{-1}.
  \end{equation*}
\end{lemma}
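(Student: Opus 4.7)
The plan is to show this as a short corollary of the identities already established in~\eqref{eq:iota-plus-adjun-equals-iota-minus-inverse} together with the domain identity built into the definition of a quasi Gelfand triple. Concretely, from~\eqref{eq:iota-plus-adjun-equals-iota-minus-inverse} we have $\iota_{+}\adjun \iota_{-} g = g$ for every $g \in \dom\iota_{-}$ and $\iota_{-}\adjun \iota_{+} f = f$ for every $f \in \dom\iota_{+}$. Since $\iota_{-}$ and $\iota_{+}$ are injective, these identities translate into the graph inclusions $\iota_{-}^{-1} \subseteq \iota_{+}\adjun$ and $\iota_{+}^{-1} \subseteq \iota_{-}\adjun$.

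To upgrade these inclusions to equalities, I only need to verify that the domains already agree. For the first identity, $\dom \iota_{-}^{-1} = \ran \iota_{-}$ by definition, and the quasi Gelfand triple assumption gives exactly $\dom \iota_{+}\adjun = \ran \iota_{-}$. Hence $\iota_{-}^{-1}$ and $\iota_{+}\adjun$ are single-valued operators with the same domain, where one extends the other, so they coincide.

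For the second identity I invoke \Cref{le:iota-adjun-dom}, which yields $\dom \iota_{-}\adjun = \ran \iota_{+} = \dom \iota_{+}^{-1}$; combined with $\iota_{+}^{-1} \subseteq \iota_{-}\adjun$ this gives $\iota_{-}\adjun = \iota_{+}^{-1}$. There is no genuine obstacle here: the content of the lemma is essentially that the defining condition $\dom \iota_{+}\adjun = \ran \iota_{-}$ together with~\eqref{eq:iota-plus-adjun-equals-iota-minus-inverse} forces the adjoint to be the full inverse, rather than merely an extension of it.
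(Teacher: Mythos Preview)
Your proof is correct and follows essentially the same approach as the paper: use the identity $\iota_{+}\adjun\iota_{-}g=g$ from~\eqref{eq:iota-plus-adjun-equals-iota-minus-inverse} to get $\iota_{-}^{-1}\subseteq\iota_{+}\adjun$, then invoke the defining domain condition $\dom\iota_{+}\adjun=\ran\iota_{-}$ to upgrade to equality, and treat the second identity symmetrically via \Cref{le:iota-adjun-dom}. You are slightly more explicit than the paper, which dispatches the second equality with ``Analogously'', but the argument is the same.
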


\begin{proof}
  By~\eqref{eq:iota-plus-adjun-equals-iota-minus-inverse} we have $\iota_{+}\adjun \iota_{-} g = g$ for all $g \in \dom \iota_{+}$. Since $\ran \iota_{-} = \dom \iota_{+}\adjun$ (by assumption), we conclude that $\iota_{+}\adjun = \iota_{-}^{-1}$.

  Analogously, the second equality can be shown.
\end{proof}

\begin{theorem}\label{th:quasi-gelfand-triple-charaterization}
  Let $\hs$ be a reflexive Banach space and $\hsmid$ be a Hilbert space and $\iota_{+}\colon \dom \iota_{+} \subseteq \hs \to \hsmid$ be a densely defined, closed, and injective linear mapping with dense range. Then there exists a Banach space $\hsdual$ and a mapping $\iota_{-}$ such that $(\hs,\hsmid,\hsdual)$ is a quasi Gelfand triple.

  In particular, $\hsdual$ is given by \Cref{def:Dminus}, where $D_{+} = \ran \iota_{+}$.
\end{theorem}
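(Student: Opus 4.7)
The plan is to reduce the statement to the setting developed in Section \ref{sec:motivation} and then invoke \Cref{prop:iota-closed-equivalences} together with \Cref{th:hs-hsdual-dual-pairing}. Set $\tilde{D}_{+} \coloneqq \ran \iota_{+} \subseteq \hsmid$ and endow it with the pulled-back norm $\norm{f}_{\hs}^{\mathrm{new}} \coloneqq \norm{\iota_{+}^{-1} f}_{\hs}$. Since $\iota_{+}$ has dense range, $\tilde{D}_{+}$ is dense in $\hsmid$; since $\iota_{+}$ is densely defined and injective, $\iota_{+}^{-1}$ extends to an isometric isomorphism between the completion of $(\tilde{D}_{+}, \norm{\cdot}_{\hs}^{\mathrm{new}})$ and $\hs$. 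Identifying these spaces via that isomorphism, the inclusion $\tilde{\iota}_{+}\colon \tilde{D}_{+} \subseteq \hs \to \hsmid,\, f \mapsto f$ from the motivation section coincides with the given $\iota_{+}$. This places us exactly in the Section \ref{sec:motivation} framework with $\hs$ reflexive.

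Because $\iota_{+}$ is closed and injective, \ref{item:iota-closed-equivalences-iii} of \Cref{prop:iota-closed-equivalences} holds, so all four assertions of that proposition are valid. Define $D_{-}$ and $\hsdual$ as in \Cref{def:Dminus}, and let $\iota_{-}\colon D_{-} \subseteq \hsdual \to \hsmid$ be the inclusion. By \Cref{le:Dminus-complete} and \Cref{th:iotap-and-iotam} the mapping $\iota_{-}$ is closed and injective; by construction $D_{-}$ is dense in $\hsdual$, and by \ref{item:iota-closed-equivalences-iv} it is dense in $\hsmid$, so $\iota_{-}$ is densely defined with dense range.

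By \Cref{th:hs-hsdual-dual-pairing}, $(\hs, \hsdual)$ is then a complete dual pair with pairing $\dualprod{g}{f}_{\hsdual,\hs} \coloneqq \dualprod{\Psi g}{f}_{\hs\dual,\hs}$, and the Remark following that theorem gives
\[
  \dualprod{g}{f}_{\hsdual,\hs} = \scprod{\iota_{-} g}{\iota_{+} f}_{\hsmid}
\]
for $f\in\dom\iota_{+}$ and $g\in\dom\iota_{-}$, so $(\hs,\hsmid,\hsdual)$ is a pre-quasi Gelfand triple. It remains to verify the extra condition $\dom \iota_{+}\adjun = \ran \iota_{-}$, now with $\iota_{+}\adjun$ taken with respect to the freshly established complete dual pairs $(\hs,\hsdual)$ and $(\hsmid,\hsmid)$. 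By the characterization of $\dom \iota_{+}\adjun$ we have $g \in \dom \iota_{+}\adjun$ iff $\dom \iota_{+} \ni h \mapsto \scprod{g}{\iota_{+} h}_{\hsmid}$ is $\norm{\cdot}_{\hs}$-continuous, which, via the identification of $\dom\iota_{+}$ with $\tilde{D}_{+}$, is precisely the defining condition $g \in D_{-}$ from \Cref{def:Dminus}. Since $\ran\iota_{-} = D_{-}$, this yields the desired equality.

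The main bookkeeping obstacle is the shift in viewpoint for the adjoint: in Section \ref{sec:motivation} $\tilde{\iota}_{+}\adjun$ lives in $\hs\dual$, whereas for the quasi Gelfand triple structure it must be viewed through the dual pair $(\hs,\hsdual)$ constructed a posteriori. This is resolved by the isometric isomorphism $\Psi\colon \hsdual\to\hs\dual$ from \Cref{th:hs-hsdual-dual-pairing}, which translates between the two and shows that the condition $\dom\iota_{+}\adjun=\ran\iota_{-}$ for the new adjoint reduces, after identification, to the defining property of $D_{-}$.
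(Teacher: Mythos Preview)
Your proof is correct and follows essentially the same route as the paper: identify $\ran\iota_{+}$ with $\dom\iota_{+}$ to land in the Section~\ref{sec:motivation} setup, invoke \Cref{prop:iota-closed-equivalences}\ref{item:iota-closed-equivalences-iii} and \Cref{th:hs-hsdual-dual-pairing} to build $\hsdual$ and $\iota_{-}$, and then read off $\dom\iota_{+}\adjun=D_{-}=\ran\iota_{-}$. The paper's proof is terser and simply cites \Cref{le:adjoint-embedding} for the last step; your added paragraph on the adjoint bookkeeping (that the domain condition for $\iota_{+}\adjun$ is the same whether the adjoint is taken into $\hs\dual$ or into the newly constructed $\hsdual$, since both reduce to $\norm{\cdot}_{\hs}$-continuity of $h\mapsto\scprod{g}{\iota_{+}h}_{\hsmid}$) makes explicit a point the paper leaves implicit.
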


\begin{proof}
  We will identify $\dom \iota_{+}$ with $\ran \iota_{+}$ and denote it by $D_{+}$.
  Then \cref{item:iota-closed-equivalences-iii} of \Cref{prop:iota-closed-equivalences} is satisfied.
  Hence, the corresponding $D_{-}$ (\Cref{def:Dminus}) is dense in $\hsmid$ and its completion
  $\hsdual$ (w.r.t.\ to $\norm{\cdot}_{\hsdual}$) establishes the complete dual pair $(\hs,\hsdual)$, by \Cref{th:hs-hsdual-dual-pairing}.
  The mapping
  \begin{equation*}
    \iota_{-} \colon
    \mapping{D_{-}\subseteq \hsdual}{\hsmid}{g}{g,}
  \end{equation*}
  is densely defined and injective by construction. By the already shown $\ran \iota_{-} = D_{-}$ is dense in $\hsmid$. Finally, by \Cref{th:iotap-and-iotam} $\iota_{-}$ is closed and by \Cref{le:adjoint-embedding} $\dom \iota_{+}\adjun = D_{-}= \ran \iota_{-}$.
\end{proof}

\begin{remark}\label{re:setting-yields-gelfand-triple}
  By \Cref{th:quasi-gelfand-triple-charaterization} the setting in the beginning of \Cref{sec:motivation} establishes a quasi Gelfand triple, if one assertion of \Cref{prop:iota-closed-equivalences} is satisfied.
\end{remark}


\begin{framed}
  \noindent
  From now on we will assume that $(\hs,\hsmid,\hsdual)$ is a quasi Gelfand triple and we will identify $\dom \iota_{+}$ with $\ran \iota_{+}$ and denote it by $D_{+}$ as in \Cref{fig:quasi-gelfand-triple}. Analogously, we identify $\dom \iota_{-}$ with $\ran \iota_{-}$ and denote it with $D_{-}$.
\end{framed}

These identifications are really meaningful as we can endow $D_{+}$ (as a subset of $\hsmid$) with $\norm{f}_{\hs} \coloneqq \norm{\iota_{+}^{-1}f}_{\hs}$ for $f \in D_{+}$. Then the completion of $D_{+}$ w.r.t.\ to this norm is clearly isomorphic to $\hs$. The same goes for $D_{-}$.

The set $D_{-} = \ran \iota_{-}$ (previous identification) coincides with the set $D_{-}$ defined in \Cref{def:Dminus} for $\tilde{D}_{+} \coloneqq D_{+}$.

\begin{proposition}\label{th:Dplus-cap-Dminus-complete-with-Xplus-cap-Xminus-norm}
  The space $D_{+}\cap D_{-}$ is complete with respect to
  \begin{equation*}
    \norm{\cdot}_{\hs\cap\hsdual} \coloneqq \sqrt{\norm{\cdot}_{\hs}^{2} + \norm{\cdot}_{\hsdual}^{2}}
    \quad\text{and}\quad
    \norm{f}_{\hsmid} \leq \norm{f}_{\hs\cap \hsdual} \quad \forall f\in D_{+}\cap D_{-}.
  \end{equation*}
\end{proposition}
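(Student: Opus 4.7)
The plan is to first establish the norm inequality $\norm{f}_{\hsmid} \leq \norm{f}_{\hs\cap\hsdual}$ and then leverage it to turn Cauchy sequences with respect to $\norm{\cdot}_{\hs\cap\hsdual}$ into Cauchy sequences in the pivot space $\hsmid$, where the closedness of $\iota_{+}$ and $\iota_{-}$ can be invoked.

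For the inequality I would expand $\norm{f}_{\hsmid}^{2} = \scprod{f}{f}_{\hsmid}$, rewrite $f \in D_{+}\cap D_{-}$ as $f = \iota_{+}f_{+} = \iota_{-}f_{-}$ (with $f_{+} \in \hs$ and $f_{-} \in \hsdual$ via the two identifications), apply the defining relation \eqref{eq:quasi-gelfand-triple-condition} of a quasi Gelfand triple to rewrite the inner product as $\dualprod{f_{-}}{f_{+}}_{\hsdual,\hs}$, and estimate this by the duality together with an AM--GM step to get $\norm{f}_{\hsmid}^{2} \leq \norm{f_{-}}_{\hsdual}\norm{f_{+}}_{\hs} \leq \tfrac{1}{2}(\norm{f}_{\hs}^{2} + \norm{f}_{\hsdual}^{2})$. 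This yields the claimed inequality (with a slightly sharper constant, which is not needed).

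For completeness, starting from a Cauchy sequence $(f_{n})_{n\in\N}$ in $(D_{+}\cap D_{-},\norm{\cdot}_{\hs\cap\hsdual})$, I obtain three simultaneous Cauchy properties: in $\hs$, in $\hsdual$, and---thanks to the inequality just proved---in $\hsmid$. Denote the respective limits by $f_{+}^{\infty}\in\hs$, $f_{-}^{\infty}\in\hsdual$, and $f\in\hsmid$. The crux is to identify all three with a single element of $D_{+}\cap D_{-}$. Since $\iota_{+}^{-1} f_{n} \to f_{+}^{\infty}$ in $\hs$ while $\iota_{+}(\iota_{+}^{-1} f_{n}) = f_{n} \to f$ in $\hsmid$, the closedness of $\iota_{+}$ from \Cref{def:quasi-gelfand-triple} gives $f_{+}^{\infty}\in\dom\iota_{+}$ and $\iota_{+} f_{+}^{\infty} = f$; hence $f\in D_{+}$ with $\norm{f_{n}-f}_{\hs}\to 0$. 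The symmetric argument using the closedness of $\iota_{-}$ puts $f\in D_{-}$ and gives $\norm{f_{n}-f}_{\hsdual}\to 0$. Combining the two norm convergences yields $f_{n}\to f$ in $\norm{\cdot}_{\hs\cap\hsdual}$.

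The main (and only nontrivial) obstacle is the simultaneous identification of the $\hs$-, $\hsdual$- and $\hsmid$-limits, which is precisely where the closedness assumption built into \Cref{def:quasi-gelfand-triple} is essential; without it the sequences $\iota_{\pm}^{-1}f_n$ could converge in $\hs$ respectively $\hsdual$ to elements whose $\iota_{\pm}$-images disagree with the $\hsmid$-limit $f$. Everything else is routine.
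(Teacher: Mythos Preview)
Your proposal is correct and follows essentially the same route as the paper: both derive the norm inequality by rewriting $\scprod{f}{f}_{\hsmid}$ as the dual pairing $\dualprod{f}{f}_{\hsdual,\hs}$ via \eqref{eq:quasi-gelfand-triple-condition} and estimating, and both prove completeness by using the inequality to get a Cauchy sequence in $\hsmid$ and then invoking the closedness of $\iota_{+}$ and $\iota_{-}$ to identify the three limits. Your AM--GM step even yields the slightly sharper bound $\norm{f}_{\hsmid}^{2} \leq \tfrac{1}{2}\norm{f}_{\hs\cap\hsdual}^{2}$, whereas the paper is content with $\norm{f}_{\hsdual}\norm{f}_{\hs} \leq \norm{f}_{\hs\cap\hsdual}^{2}$.
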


\begin{proof}
  For $f \in D_{+}\cap D_{-}$ we have
  \begin{align*}
    \norm{f}_{\hsmid}^{2} = \abs{\scprod{f}{f}_{\hsmid}} = \abs{\dualprod{f}{f}_{\hsdual,\hs}} \leq \norm{f}_{\hsdual} \norm{f}_{\hs} \leq \norm{f}_{\hs\cap\hsdual}^{2}.
  \end{align*}
  Hence, every Cauchy sequence in $D_{+}\cap D_{-}$ with respect to $\norm{\cdot}_{\hs\cap\hsdual}$ is also a Cauchy sequence with respect to $\norm{\cdot}_{\hsmid}$, $\norm{\cdot}_{\hs}$ and $\norm{\cdot}_{\hsdual}$.

  Let $(f_{n})_{n\in\N}$ be a Cauchy sequence in $D_{+}\cap D_{-}$ with respect to $\norm{\cdot}_{\hs\cap\hsdual}$.
  By the closedness of $\iota_{+}$ the limit with respect to $\norm{\cdot}_{\hsmid}$ and the limit with respect to $\norm{\cdot}_{\hs}$ coincide.
  The same argument for $\iota_{-}$ yields that the limit with respect to $\norm{\cdot}_{\hsmid}$ and the limit with respect $\norm{\cdot}_{\hsdual}$ also coincide.
  Therefore, all these limits have to coincide and $(f_{n})_{n\in\N}$ converges to that limit in $D_{+}\cap D_{-}$ w.r.t.\ $\norm{\cdot}_{\hs\cap\hsdual}$.
\end{proof}

\begin{lemma}\label{th:iota-p-iota-m-closed}
  The operator
  \begin{align*}
    \begin{bmatrix}
      \iota_{+}    & \iota_{-}
    \end{bmatrix}
        \colon
        \mapping{D_{+} \times D_{-} \subseteq \hs \times \hsdual}{\hsmid}{\begin{bmatrix}
          f \\ g
        \end{bmatrix}}{f+g,}
  \end{align*}
  is closed.
\end{lemma}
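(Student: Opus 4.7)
The strategy is to reduce the closedness of $\begin{bmatrix}\iota_{+} & \iota_{-}\end{bmatrix}$ to the already known closedness of $\iota_{+}$ and $\iota_{-}$ individually. Suppose a sequence $(f_{n}, g_{n})_{n\in\N}$ in $D_{+}\times D_{-}$ satisfies $f_{n}\to f$ in $\hs$, $g_{n}\to g$ in $\hsdual$, and $f_{n}+g_{n}\to h$ in $\hsmid$; we need to show $f\in D_{+}$, $g\in D_{-}$, and $\iota_{+} f + \iota_{-} g = h$. Once we establish that $(f_{n})_{n\in\N}$ is Cauchy in $\hsmid$, the closedness of $\iota_{+}$ yields $f\in D_{+}$ with $\iota_{+} f = \lim_{n\to\infty} f_{n}$ in $\hsmid$; then $g_{n} = (f_{n}+g_{n}) - f_{n}$ also converges in $\hsmid$, so closedness of $\iota_{-}$ gives $g\in D_{-}$ and $\iota_{-} g = h - \iota_{+} f$, hence $\iota_{+} f + \iota_{-} g = h$.

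The crux is therefore to verify that $(f_{n})_{n\in\N}$ is Cauchy in $\hsmid$, which is non-trivial precisely because $\iota_{+}$ is not assumed continuous and so $\hs$-convergence does not transfer to $\hsmid$ for free. The key computation starts from the decomposition $f_{n}-f_{m} = \bigl((f_{n}+g_{n}) - (f_{m}+g_{m})\bigr) - (g_{n}-g_{m})$, substituted into the first slot of the $\hsmid$-inner product, giving
\begin{equation*}
  \norm{f_{n} - f_{m}}_{\hsmid}^{2}
  = \scprod{(f_{n}+g_{n}) - (f_{m}+g_{m})}{f_{n}-f_{m}}_{\hsmid} - \scprod{g_{n}-g_{m}}{f_{n}-f_{m}}_{\hsmid}.
\end{equation*}
The first term is bounded by $\norm{(f_{n}+g_{n})-(f_{m}+g_{m})}_{\hsmid}\,\norm{f_{n}-f_{m}}_{\hsmid}$ via Cauchy--Schwarz. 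For the second, since $f_{n}-f_{m}\in D_{+}$ and $g_{n}-g_{m}\in D_{-}$, the defining identity~\eqref{eq:quasi-gelfand-triple-condition} of a quasi Gelfand triple rewrites it as $\dualprod{g_{n}-g_{m}}{f_{n}-f_{m}}_{\hsdual,\hs}$, which is bounded by $\norm{g_{n}-g_{m}}_{\hsdual}\,\norm{f_{n}-f_{m}}_{\hs}$.

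Combining these estimates produces a quadratic inequality $a_{n,m}^{2} \leq b_{n,m}\,a_{n,m} + c_{n,m}$, where $a_{n,m}\coloneqq \norm{f_{n}-f_{m}}_{\hsmid}$, $b_{n,m}\coloneqq \norm{(f_{n}+g_{n})-(f_{m}+g_{m})}_{\hsmid}$, and $c_{n,m}\coloneqq \norm{g_{n}-g_{m}}_{\hsdual}\,\norm{f_{n}-f_{m}}_{\hs}$. As $n,m\to\infty$, both $b_{n,m}$ and $c_{n,m}$ tend to zero by the Cauchy properties of the three given sequences, and the elementary estimate $a_{n,m}\leq \tfrac{1}{2}(b_{n,m}+\sqrt{b_{n,m}^{2}+4c_{n,m}})$ then forces $a_{n,m}\to 0$. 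This delivers the missing Cauchy property in $\hsmid$ and completes the reduction to closedness of $\iota_{+}$ and $\iota_{-}$.
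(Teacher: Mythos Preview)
Your proof is correct and is in fact more elementary than the paper's. The paper argues as follows: from
\[
  \norm{f_{n}+g_{n}}_{\hsmid}^{2} = \norm{f_{n}}_{\hsmid}^{2} + \norm{g_{n}}_{\hsmid}^{2} + 2\Re\scprod{f_{n}}{g_{n}}_{\hsmid}
\]
and the convergence of $\scprod{f_{n}}{g_{n}}_{\hsmid}=\dualprod{f_{n}}{g_{n}}_{\hs,\hsdual}$, it deduces only that $\norm{f_{n}}_{\hsmid}$ is bounded. It then extracts a weakly convergent subsequence in $\hsmid$, upgrades this to strong convergence via Ces\`aro means (\Cref{th:weak-to-strong-convergent}), and finally applies closedness of $\iota_{+}$ and $\iota_{-}$ to the averaged sequences.

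Your route bypasses all of this: by inserting the splitting $f_{n}-f_{m}=\bigl((f_{n}+g_{n})-(f_{m}+g_{m})\bigr)-(g_{n}-g_{m})$ into one slot of $\scprod{\cdot}{\cdot}_{\hsmid}$ and invoking~\eqref{eq:quasi-gelfand-triple-condition} on the cross term, you obtain the Cauchy property in $\hsmid$ directly from the quadratic inequality, with no recourse to weak compactness, subsequences, or convex combinations. The paper's argument is more ``soft'' (boundedness plus compactness), yours is a sharper quantitative estimate; both lead to the same closing step via closedness of $\iota_{\pm}$, but yours avoids the auxiliary \Cref{th:weak-to-strong-convergent} entirely.
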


\begin{proof}
  Let $\left(\left(\begin{bsmallmatrix}f_n \\ g_n\end{bsmallmatrix},z_n\right)\right)_{n\in\N}$ be a sequence in
  \(
  \begin{bmatrix}
    \iota_{+} & \iota_{-}
  \end{bmatrix}
  \)
  that converges to
  \(
  \left(
    \begin{bsmallmatrix}f \\ g\end{bsmallmatrix},z
  \right)
  \)
  in $\hs\times \hsdual \times \hsmid$, i.e.,
  \begin{align*}
    \lim_{n\to\infty} f_{n} &= f\quad (\text{w.r.t.}\ \norm{\cdot}_{\hs}), \\
    \lim_{n\to\infty} g_{n} &= g\quad (\text{w.r.t.}\ \norm{\cdot}_{\hsdual}), \\
    \mathllap{\text{and}\quad}\lim_{n\to\infty} f_{n} + g_{n} = \lim_{n\to\infty} z_{n} &= z\quad (\text{w.r.t.}\ \norm{\cdot}_{\hsmid}).
  \end{align*}
  Then we have
    \begin{align*}
      \norm{z}_{\hsmid}^2 = \lim_{n\to\infty} \norm{f_n + g_n}_{\hsmid}^2 = \lim_{n\to\infty}\big( \norm{f_n}_{\hsmid}^2 + \norm{g_n}_{\hsmid}^2 + 2\Re \scprod{f_n}{g_n}_{\hsmid}\big).
    \end{align*}
    Since $2\Re \scprod{f_n}{g_n}_{\hsmid}$ converges to
    $2\Re\dualprod{f}{g}_{\hs,\hsdual}$, we conclude that $\norm{f_n}_{\hsmid}$
    and $\norm{g_n}_{\hsmid}$ are bounded. Hence, there exists a subsequence
    of $(f_{n})_{n\in\N}$ that converges weakly (in $\hsmid$) to an $\tilde{f}\in\hsmid$.
    Moreover, by \Cref{th:weak-to-strong-convergent} we can pass on to a further
    subsequence  $(f_{n(k)})_{k\in\N}$ such that $\big(\frac{1}{j}\sum_{k=1}^{j}f_{n(k)}\big)_{j\in\N}$ converges to $\tilde{f}$ strongly (w.r.t.\ $\norm{\cdot}_{\hsmid}$). The sequence $\big(\frac{1}{j}\sum_{k=1}^{j}f_{n(k)}\big)_{j\in\N}$ has still the limit $f$ in $\hs$ (w.r.t.\ $\norm{\cdot}_{\hs}$) and because $\iota_{+}$ is closed we conclude that $f=\tilde{f}\in D_{+}$.
    By linearity of the limit we also have $\frac{1}{j}\sum_{k=1}^{j}g_{n(k)} \to z - f$ in $\hsmid$ for the same subsequence.
    Since $\frac{1}{j}\sum_{k=1}^{j}g_{n(k)}$ is a Cauchy sequence in both $\hsdual$ and $\hsmid$, the closedness of $\iota_{-}$ gives that $g = z-f \in D_{-}$.
    Hence,
    $z=
    \begin{bmatrix}
      \iota_{+} & \iota_{-}
    \end{bmatrix}
    \left[
      \begin{smallmatrix}
        f\\g
      \end{smallmatrix}
    \right]$
    and the operator $\begin{bmatrix}\iota_{+} & \iota_{-}\end{bmatrix}$ is closed.
\end{proof}

\begin{proposition}\label{th:Dplus-cap-Dminus-dense-in-X0}
  $D_{+} \cap D_{-}$ is dense in $\hsmid$ with respect to $\norm{\cdot}_{\hsmid}$.
\end{proposition}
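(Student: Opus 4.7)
The strategy is to realise $D_{+}\cap D_{-}$ as the domain of the adjoint of the closed operator $T \coloneqq \begin{bmatrix}\iota_{+} & \iota_{-}\end{bmatrix}$ from \Cref{th:iota-p-iota-m-closed}, and then to invoke the bidual identity $T\adjun\adjun = T$ to convert the single-valuedness of $T$ into the density we need.

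First I would note that $\dom T = D_{+}\times D_{-}$ is dense in $\hs\times\hsdual$, since $D_{+}$ is dense in $\hs$ and $D_{-}$ is dense in $\hsdual$ by the definition of a quasi Gelfand triple. Hence $T$ is a closed, densely defined operator from the complete dual pair $(\hs\times\hsdual,\hsdual\times\hs)$ into the Hilbert space $\hsmid$, and its adjoint $T\adjun$ is a well-defined linear relation from $\hsmid$ into $\hsdual\times\hs$. An element $h\in\hsmid$ lies in $\dom T\adjun$ precisely when the form
\[
  (f,g)\mapsto \scprod{h}{\iota_{+}f + \iota_{-}g}_{\hsmid}
\]
extends continuously from $D_{+}\times D_{-}$ to $\hs\times\hsdual$. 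Testing separately with $g=0$ and $f=0$, this is equivalent to $h\in\dom\iota_{+}\adjun\cap\dom\iota_{-}\adjun$. The quasi Gelfand triple axiom gives $\dom\iota_{+}\adjun = D_{-}$, and \Cref{le:iota-adjun-dom} gives $\dom\iota_{-}\adjun = D_{+}$, so altogether $\dom T\adjun = D_{+}\cap D_{-}$.

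Finally, since $T$ is closed, the linear-relation bidual identity yields $T\adjun\adjun = T$, and in particular $\mul T\adjun\adjun = \mul T = \set{0}$. Combining this with the general identity $\mul S\adjun = (\dom S)^{\perp}$ applied to $S = T\adjun$, and observing that the dual pair on the pivot side is $(\hsmid,\hsmid)$ so that the annihilator is the usual Hilbert space orthogonal complement, we obtain $(D_{+}\cap D_{-})^{\perp} = \set{0}$ in $\hsmid$. This is exactly the density statement. I do not foresee any serious obstacle; the only point requiring care is that $T\adjun\adjun = T$ has to be used in the setting of relations between a product of complete dual pairs and a Hilbert space, which is part of the linear-relation machinery from \cite{Sk-Phd}.
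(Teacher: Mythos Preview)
Your proof is correct and is essentially the same as the paper's argument. The paper likewise works with $T = \begin{bmatrix}\iota_{+} & \iota_{-}\end{bmatrix}$, identifies $\dom T\adjun = \dom\iota_{+}\adjun \cap \dom\iota_{-}\adjun = D_{-}\cap D_{+}$ via \Cref{le:iota-adjun-dom}, and then uses the duality $(\mul T)^{\perp} = \cl{\dom T\adjun}$ together with $\mul T = \set{0}$; your route through $T\adjun\adjun = T$ and $\mul S\adjun = (\dom S)^{\perp}$ with $S = T\adjun$ is just the contrapositive formulation of the same identity.
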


\begin{proof}
  By $\dom \iota_{\pm}\adjun = \ran \iota_{\mp} = D_{\mp}$ (\Cref{le:iota-adjun-dom}) and
  \(
  \mul
  \begin{bmatrix}
    \iota_{+} & \iota_{-}
  \end{bmatrix}
  = \sset{0}
  \)
  we have
  \begin{equation*}
    \hsmid =
    \big(
    \mul
    \begin{bmatrix}
      \iota_{+} & \iota_{-}
    \end{bmatrix}
    \big)^{\perp}
    =
    \cl{%
      \dom
      \begin{bmatrix}
        \iota_{+} & \iota_{-}
      \end{bmatrix}\adjun
    }
    = \cl{\dom \iota_{+}\adjun \cap \dom \iota_{-}\adjun}
    = \cl{D_{-} \cap D_{+}}.
    \ifSn{\tag*{\qedhere}}{\qedhere}
  \end{equation*}
\end{proof}

\section{Quasi Gelfand Triples with Hilbert Spaces}
\label{sec:quasi-Gelfand-triples-with-Hilbert-spaces}

In this section we will regard a quasi Gelfand triple $(\hs,\hsmid,\hsdual)$, where $\hs$ and $\hsdual$ (and of course $\hsmid$) are Hilbert spaces. Maybe also some these results can be proven for general quasi Gelfand triple, but we would need a replacement for \Cref{th:TTadjun-self-adjoint}.

For a quasi Gelfand triple $(\hs,\hsmid,\hsdual)$ consisting of Hilbert spaces, there exists a unitary mapping $\Psi$ from $\hsdual$ to $\hs$ (Riesz representation theorem) satisfying
\begin{equation*}
  \dualprod{g}{f}_{\hsdual,\hs} = \scprod{\Psi g}{f}_{\hs}
  \quad\text{and}\quad
  \dualprod{f}{g}_{\hs,\hsdual} = \scprod{\Psi^{-1} f}{g}_{\hsdual}.
\end{equation*}
We will refer to this mapping $\Psi$ as the \emph{duality map} of the quasi Gelfand triple.

Note that we previously regarded the adjoint of $\iota_{+}$ with respect to the dual pairs $(\hsmid,\hsmid)$ and $(\hs,\hsdual)$.
The main reason for this choice was, that if $\hs$ is not a Hilbert space, then the dual pair $(\hs,\hs)$ is not available, but also sometimes the adjoint with respect to the dual pair $(\hs,\hsdual)$ is more natural.

However, now that $\hs$ is a Hilbert space, the dual pairs $(\hs,\hs)$ and $(\hsdual,\hsdual)$ are available and seem reasonable when it comes to calculating adjoints. Hence, if we have an additional dual pair $(Y,Z)$ and a linear operator $A$ from $\hs$ to $Y$, then we have two choices for the adjoint:
\begin{equation*}
  A\adjunX{Z \times \hs}\colon \dom A\adjun \subseteq Z \to \hs \quad\text{and}\quad A\adjunX{Z \times \hsdual}\colon \dom A\adjun \subseteq Z \to \hsdual,
\end{equation*}
as defined in \Cref{def:adjoint-relation}. In order to have a short notation we will denote the adjoints that are taken w.r.t.\ the dual pairs $(\hs,\hs)$ and $(\hsdual,\hsdual)$ by $A\hadjun$ ($\mathrm{h}$ for Hilbert space duality) and the adjoints w.r.t.\ $(\hs,\hsdual)$ still by $A\adjun$, i.e.,
\begin{equation*}
  A\hadjun \colon \dom A\hadjun \subseteq Z \to \hs \quad\text{and}\quad A\adjun \colon \dom A\adjun \subseteq Z \to \hsdual.
\end{equation*}
Clearly, the same goes for mappings, where $\hs$ is the codomain and analogously for $\hsdual$.
Note that for $\hsmid$ we regard only the dual pair $(\hsmid,\hsmid)$, therefore we always take adjoints with respect to this dual pair. In particular for $\iota_{+}$ we have
\begin{equation*}
  \iota_{+}\hadjun \colon \dom \iota_{+}\hadjun \subseteq \hsmid \to \hs \quad\text{and}\quad \iota_{+}\adjun \colon \dom \iota_{+}\adjun \subseteq \hsmid \to \hsdual.
\end{equation*}
By \Cref{le:adjoints-for-different-dualities} we have the following relations between the adjoints:
\begin{equation*}
  \iota_{+}\hadjun = \Psi \iota_{+}\adjun
  \quad\text{and}\quad
  \iota_{-}\hadjun = \Psi^{-1} \iota_{-}\adjun.
\end{equation*}

\begin{corollary}\label{th:Dplus-Dminus-core-of-iota-adjun-iota}
  The set $D_{+}\cap D_{-}$ is dense in $\hs$ and $\hsdual$ with respect to their corresponding norms. More precisely $\dom \iota_{+}\adjun \iota_{+} = \iota_{+}^{-1}(D_{+} \cap D_{-})$ is dense in $\hs$ and $\dom \iota_{-}\adjun \iota_{-} = \iota_{-}^{-1}(D_{+} \cap D_{-})$ is dense in $\hsdual$.

  Furthermore, $\iota_{+}^{-1}(D_{+} \cap D_{-})$ is a core of $\iota_{+}$ and $\iota_{-}^{-1}(D_{+} \cap D_{-})$ is a core of $\iota_{-}$.
\end{corollary}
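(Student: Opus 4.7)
The plan is to reduce the statement to the classical fact (referenced in the paper as \Cref{th:TTadjun-self-adjoint}) that for a closed densely defined operator $T$ between Hilbert spaces, $T\hadjun T$ is self-adjoint and its domain is a core of $T$. The nontrivial step is identifying $\dom(\iota_{+}\adjun \iota_{+})$ explicitly; after that the density and core statements follow directly.

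First I would compute $\dom(\iota_{+}\adjun \iota_{+})$. By \Cref{le:iota-plus-adjun=iota-minus-inverse} we have $\iota_{+}\adjun = \iota_{-}^{-1}$, so $\dom \iota_{+}\adjun = \ran \iota_{-} = D_{-}$. The relation $\iota_{+}\hadjun = \Psi \iota_{+}\adjun$ with $\Psi\colon \hsdual\to\hs$ a bijection further gives $\dom \iota_{+}\hadjun = D_{-}$. Consequently
\begin{equation*}
  \dom(\iota_{+}\adjun \iota_{+}) = \dom(\iota_{+}\hadjun \iota_{+}) = \set{f\in D_{+} : \iota_{+} f \in D_{-}} = \iota_{+}^{-1}(D_{+}\cap D_{-}),
\end{equation*}
where the last equality uses that $\iota_{+} f\in\ran\iota_{+} = D_{+}$ holds automatically. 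Swapping the roles of $\iota_{+}$ and $\iota_{-}$ yields $\dom(\iota_{-}\adjun \iota_{-}) = \iota_{-}^{-1}(D_{+}\cap D_{-})$.

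Second, applying \Cref{th:TTadjun-self-adjoint} to the closed, densely defined operator $\iota_{+}$ between the Hilbert spaces $\hs$ and $\hsmid$ shows that $\dom(\iota_{+}\hadjun \iota_{+}) = \iota_{+}^{-1}(D_{+}\cap D_{-})$ is a core of $\iota_{+}$. By definition this means density in $D_{+}$ with respect to the graph norm $\sqrt{\norm{\cdot}_{\hs}^{2} + \norm{\iota_{+}\cdot}_{\hsmid}^{2}}$, and hence in particular density with respect to $\norm{\cdot}_{\hs}$. Combined with the density of $D_{+}$ in $\hs$, this gives the density of $\iota_{+}^{-1}(D_{+}\cap D_{-})$ in $\hs$. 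Under the standing identification of $\dom \iota_{+}$ with $\ran \iota_{+}$, the set $\iota_{+}^{-1}(D_{+}\cap D_{-})$ literally coincides with $D_{+}\cap D_{-}$, so $D_{+}\cap D_{-}$ is dense in $\hs$. The symmetric argument with $\iota_{-}$ in place of $\iota_{+}$ delivers the corresponding statements in $\hsdual$.

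I do not anticipate any real obstacle; the whole proof is a short bookkeeping exercise built on top of \Cref{th:TTadjun-self-adjoint}. The only mild subtlety is the coexistence of the two adjoint notions ($\adjun$ with respect to $(\hs,\hsdual)$ versus $\hadjun$ with respect to $(\hs,\hs)$), which is handled uniformly by the duality map $\Psi$, since post-composition with the bijective $\Psi$ does not affect the domain viewed as a subset of $\hsmid$.
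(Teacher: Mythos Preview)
Your proposal is correct and follows essentially the same line as the paper's proof: identify $\dom(\iota_{+}\adjun\iota_{+})=\dom(\iota_{+}\hadjun\iota_{+})=\iota_{+}^{-1}(D_{+}\cap D_{-})$ via $\iota_{+}\hadjun=\Psi\iota_{+}\adjun$ and $\dom\iota_{+}\adjun=D_{-}$, then invoke von Neumann's theorem for the density and core claims, and argue symmetrically for $\iota_{-}$. The only cosmetic difference is that the paper cites \Cref{th:TadjunT-selfadjoint-for-dual-pair} (rather than \Cref{th:TTadjun-self-adjoint} alone) for the core assertion, since in the paper the core statement is separated out from the self-adjointness theorem.
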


\begin{proof}
  Applying \Cref{th:TTadjun-self-adjoint} to $\iota_{+}$ yields $\iota_{+}\hadjun \iota_{+}$ is self-adjoint.
  Note that by \Cref{le:adjoints-for-different-dualities} we have $\iota_{+}\hadjun = \Psi \iota_{+}\adjun$, where $\Psi$ is the duality map introduced in the beginning of this section.
  Hence, $\dom \iota_{+}\hadjun \iota_{+} = \dom \iota_{+}\adjun \iota_{+}$ is dense in $\hs$.
  By \Cref{le:iota-adjun-dom} $\dom \iota_{+}\adjun = D_{-}$, consequently
  \begin{equation}\label{eq:dom-iota-adjun-iota-equals-Dp-cap-Dm}
    \dom \iota_{+}\adjun \iota_{+} = \iota_{+}^{-1}(\dom \iota_{+}\adjun \cap \ran \iota_{+}) = \iota_{+}^{-1}(D_{-} \cap D_{+}) = D_{+}\cap D_{-}.
  \end{equation}
  Finally, \Cref{th:TadjunT-selfadjoint-for-dual-pair} and \eqref{eq:dom-iota-adjun-iota-equals-Dp-cap-Dm} gives that $\iota_{+}^{-1}(D_{+} \cap D_{-})$ is a core of $\iota_{+}$.

  An analogous argument for $\iota_{-}$ yields $D_{+}\cap D_{-}$ is dense in $\hsdual$.
\end{proof}

\begin{corollary}\label{co:Dplus-plus-Dminus-pivot-space}
  $D_{+} + D_{-} = \hsmid$.
\end{corollary}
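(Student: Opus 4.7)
The plan is to invoke the classical fact that for a closed densely defined operator $T$ between Hilbert spaces, $TT\hadjun$ is self-adjoint and non-negative, so $\idop + TT\hadjun$ is bijective with bounded, everywhere defined inverse (this is \Cref{th:TTadjun-self-adjoint} together with the spectral calculus for non-negative self-adjoint operators). We apply this to the closed, densely defined, injective operator $\iota_{+}\colon D_{+} \subseteq \hs \to \hsmid$: the operator $\iota_{+}\iota_{+}\hadjun$ is then self-adjoint and non-negative on $\hsmid$, and $(\idop + \iota_{+}\iota_{+}\hadjun)^{-1}$ is a bounded operator on all of $\hsmid$ whose range is exactly $\dom(\iota_{+}\iota_{+}\hadjun)$.

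Given $y \in \hsmid$, set $z \coloneqq (\idop + \iota_{+}\iota_{+}\hadjun)^{-1} y$, so that
\begin{equation*}
  y = z + \iota_{+}\iota_{+}\hadjun z.
\end{equation*}
It remains to check that the two summands land in the correct halves of $\hsmid$. On the one hand, $z \in \dom(\iota_{+}\iota_{+}\hadjun) \subseteq \dom \iota_{+}\hadjun$; using the relation $\iota_{+}\hadjun = \Psi \iota_{+}\adjun$ established at the beginning of this section and $\dom \iota_{+}\adjun = \ran \iota_{-} = D_{-}$ from \Cref{le:iota-adjun-dom}, we obtain $z \in D_{-}$. On the other hand, $\iota_{+}\iota_{+}\hadjun z = \iota_{+}(\iota_{+}\hadjun z) \in \ran \iota_{+} = D_{+}$ by construction. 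Hence $y = z + \iota_{+}\iota_{+}\hadjun z \in D_{-} + D_{+}$.

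Since $y \in \hsmid$ was arbitrary and the reverse inclusion $D_{+}+D_{-} \subseteq \hsmid$ is immediate, this gives $D_{+}+D_{-} = \hsmid$. There is no real obstacle beyond keeping track of the identifications set up in the framed convention following \Cref{th:quasi-gelfand-triple-charaterization}; symmetrically, applying the same trick to $\iota_{-}\iota_{-}\hadjun$ yields a different but equally valid decomposition, reflecting the symmetry between $\hs$ and $\hsdual$ in a quasi Gelfand triple.
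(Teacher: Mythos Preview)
Your proof is correct and is essentially identical to the paper's own argument: both apply \Cref{th:TTadjun-self-adjoint} to $\iota_{+}$ to get the surjectivity of $\idop_{\hsmid}+\iota_{+}\iota_{+}\hadjun$, then split $y=z+\iota_{+}\iota_{+}\hadjun z$ with $z\in\dom\iota_{+}\hadjun=D_{-}$ and $\iota_{+}\iota_{+}\hadjun z\in\ran\iota_{+}=D_{+}$. The only cosmetic difference is that you spell out the identification $\dom\iota_{+}\hadjun=\dom\iota_{+}\adjun=D_{-}$ via $\iota_{+}\hadjun=\Psi\iota_{+}\adjun$, whereas the paper leaves this implicit; note also that the appeal to spectral calculus is unnecessary, since \Cref{th:TTadjun-self-adjoint} already gives bounded invertibility of $\idop+\iota_{+}\iota_{+}\hadjun$ directly.
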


\begin{proof}
  Applying \Cref{th:TTadjun-self-adjoint} to $\iota_{+}$ gives that $(\opid_{\hsmid} + \iota_{+}\iota_{+}\hadjun)$ is onto.
  Hence, for every $x\in \hsmid$ there exists a $g_{x} \in \dom \iota_{+}\iota_{+}\hadjun \subseteq D_{-}$ such that
  \begin{equation*}
    x = \underbrace{g_x}_{\in \mathrlap{D_{-}}} + \underbrace{\iota_{+} \iota_{+}\hadjun g_x}_{\in \mathrlap{D_{+}}}.
  \end{equation*}
  Since  $g_{x} \in \dom \iota_{+}\iota_{+}\hadjun$, we have $\iota_{+}\hadjun g_x \in D_{+}$ and consequently $x \in D_{+} + D_{-}$.
\end{proof}

Next we will show that we can embed an entire quasi Gelfand triple structure preservingly into a larger space. We will even give the smallest possible space that contains the entire quasi Gelfand triple. However, before we start we give a proper definition of what we mean.

\begin{definition}\label{def:structure-preservingly-embedded}
  Let $\mathcal{H}$ be a Hausdorff topological vector space. We say the quasi Gelfand triple $(\hs,\hsmid,\hsdual)$ can be \emph{structure preservingly embedded} into $\mathcal{H}$, if there exist linear, injective and continuous mappings
  \begin{align*}
    \phi_{\hs}\colon \hs \to \mathcal{H},
    \quad
    \phi_{\hsmid}\colon \hsmid \to \mathcal{H}
    \quad\text{and}\quad
    \phi_{\hsdual}\colon \hsdual \to \mathcal{H}
  \end{align*}
  such that
  \begin{align}\label{eq:structure-preserving-emdding-conditions}
    \phi_{\hs}\big\vert_{\dom \iota_{+}} = \phi_{\hsmid}\iota_{+} \quad\text{and}\quad \phi_{\hsdual}\big\vert_{\dom \iota_{-}} = \phi_{\hsmid} \iota_{-}.
  \end{align}
\end{definition}

Basically the previous definition means that the following diagram commutes.
\begin{equation*}
  \begin{tikzcd}[column sep=1em, row sep=1em]
    & & & \mathcal{H} & & &\\
    \\
    \hs\ar{rrruu}{\phi_{\hs}}&  & & \hsmid\ar{uu}[swap]{\phi_{\hsmid}} & & & \hsdual\ar{llluu}[swap]{\phi_{\hsdual}} \\
    &\dom \iota_{+}\ar{lu}{\id} \ar[shift left]{r}{\iota_{+}} & \ran \iota_{+}\ar[shift left]{l}{\iota_{+}^{-1}}\ar{ru}[swap]{\id} & & \ran \iota_{-}\ar{lu}{\id}\ar[shift right]{r}[swap]{\iota_{-}^{-1}} & \dom \iota_{-}\ar[shift right]{l}[swap]{\iota_{-}}\ar[swap]{ru}{\id}&
  \end{tikzcd}
\end{equation*}
Since we identify $\dom \iota_{+}$ and $\ran \iota_{+}$ with each other and denote it as $D_{+}$ and the same for $\iota_{-}$, we can reduce the previous diagram to the following diagram.
\begin{equation*}
  \begin{tikzcd}[column sep=1em, row sep=1em]
    & & \mathcal{H} & & \\
    \\
    \hs\ar{rruu}{\phi_{\hs}} & & \hsmid\ar{uu}[swap]{\phi_{\hsmid}} & & \hsdual\ar{lluu}[swap]{\phi_{\hsdual}} \\
    & D_{+}\ar{lu}{\id}\ar{ru}[swap]{\id} & & D_{-}\ar{lu}{\id}\ar{ru}[swap]{\id} &
  \end{tikzcd}
\end{equation*}
From this point of view the compatibility condition~\eqref{eq:structure-preserving-emdding-conditions} can be seen as
\begin{equation*}
  \phi_{\hs} f = \phi_{\hsmid} f \quad\forall f \in D_{+}
  \quad\text{and}\quad
  \phi_{\hsdual} g = \phi_{\hsmid} g \quad\forall g \in D_{-}.
\end{equation*}


Note if $(\hs,\hsmid,\hsdual)$ is an ``ordinary'' Gelfand triple (where $\iota_{+}$ is continuous), then it is usually denoted by $\hs \subseteq \hsmid \subseteq \hsdual$. To be precise these inclusions are actually identifications via the mappings $\iota_{+}$ and $\iota_{-}^{-1}$. The continuity and closedness of $\iota_{+}$ implies $\dom \iota_{+} = \hs$ and that $\iota_{+}\adjun$ is also continuous and everywhere defined.
Since $\iota_{+}\adjun = \iota_{-}^{-1}$ (\Cref{le:iota-plus-adjun=iota-minus-inverse}), we have the following setting:
\begin{equation*}
  \begin{tikzcd}
    \hs \ar{r}{\iota_{+}} & \hsmid \ar{r}{\iota_{-}^{-1}}& \hsdual,
  \end{tikzcd}
\end{equation*}
which suggests that $\hsdual$ contains the entire Gelfand triple.
Defining $\phi_{\hs} = \iota_{-}^{-1}\iota_{+}$, $\phi_{\hsmid} = \iota_{-}^{-1}$ and $\phi_{\hsdual} = \id_{\hsdual}$ justifies that $\hsdual$ contains the Gelfand triple in a structure preserving manner as defined in \Cref{def:structure-preservingly-embedded}.

For quasi Gelfand triple the construction of a space that covers the entire quasi Gelfand triple needs a bit more attention.

\smallskip

By \Cref{th:Dplus-cap-Dminus-complete-with-Xplus-cap-Xminus-norm}, $D_{+} \cap D_{-}$ with $\norm{\cdot}_{\hs\cap\hsdual}$ is complete and therefore a Banach space. Since $\hs$ and $\hsdual$ are Hilbert spaces (in this section) we can define the inner product
\begin{equation*}
  \scprod{g}{f}_{\hs\cap\hsdual} \coloneqq
  \scprod{g}{f}_{\hs} + \scprod{g}{f}_{\hsdual}
\end{equation*}
on $D_{+} \cap D_{-}$. This inner product induces the previous norm $\norm{\cdot}_{\hs\cap\hsdual}$. Consequently $D_{+} \cap D_{-}$ is a Hilbert space with $\scprod{\cdot}{\cdot}_{\hs\cap\hsdual}$.
For shorter notation we denote $D_{+} \cap D_{-}$ by $\Zp$, the corresponding inner product and norm by $\scprod{\cdot}{\cdot}_{\Zp}$ and $\norm{\cdot}_{\Zp}$, respectively.

\begin{corollary}
  Let $\Zp = D_{+} \cap D_{-}$ be the space defined in the previous paragraph.
  Then the triple $(\Zp,\hsmid,\Zp\dual)$ forms an ``ordinary'' Gelfand triple. In particular $\Zp\dual$ is isometrically isomorphic to $\Zm$, the completion of
  $\hsmid$ w.r.t.\
  \begin{equation*}
    \norm{h}_{\Zm} \coloneqq \sup_{z \in \Zp\setminus\sset{0}} \frac{\abs{\scprod{h}{z}_{\hsmid}}}{\norm{z}_{\Zp}}.
  \end{equation*}
\end{corollary}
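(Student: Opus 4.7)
The plan is to verify the three defining properties of an ``ordinary'' Gelfand triple for $(\Zp,\hsmid,\Zp\dual)$ and then identify $\Zp\dual$ with $\Zm$ via the usual isometric extension argument described in the introduction. First, I would check that $\Zp$ embeds continuously and densely into $\hsmid$: the continuity follows from \Cref{th:Dplus-cap-Dminus-complete-with-Xplus-cap-Xminus-norm}, which gives $\norm{f}_{\hsmid}\leq \norm{f}_{\hs\cap\hsdual}=\norm{f}_{\Zp}$ for every $f\in\Zp$, and the density follows directly from \Cref{th:Dplus-cap-Dminus-dense-in-X0}. Since $\Zp$ is a Hilbert space, it is automatically reflexive. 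This already establishes that $(\Zp,\hsmid,\Zp\dual)$ is an ordinary Gelfand triple in the sense of the introduction.

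Next I would construct the isometric isomorphism $\Zp\dual \cong \Zm$. Define
\begin{equation*}
  \Phi\colon \mapping{\hsmid}{\Zp\dual}{h}{\scprod{h}{\cdot}_{\hsmid}\big\vert_{\Zp}.}
\end{equation*}
Because $\norm{z}_{\hsmid}\leq\norm{z}_{\Zp}$, the functional $\Phi(h)$ is indeed continuous on $\Zp$, and by the very definition of $\norm{\cdot}_{\Zm}$ we have $\norm{\Phi(h)}_{\Zp\dual}=\norm{h}_{\Zm}$ for all $h\in\hsmid$. Thus $\Phi$ is isometric with respect to the norm $\norm{\cdot}_{\Zm}$ on its domain and extends by continuity to an isometry $\hat{\Phi}\colon \Zm \to \Zp\dual$. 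Since isometries between complete spaces have closed range, it only remains to prove that $\Phi(\hsmid)$ is dense in $\Zp\dual$.

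For the density, the step I expect to be slightly delicate (though still routine), I would argue via the reflexivity of $\Zp$: every element of $\Zp\dual\dual$ is represented by some $z\in\Zp$, so if $z\in\Zp$ annihilates $\Phi(\hsmid)$, then $\scprod{h}{z}_{\hsmid}=0$ for every $h\in\hsmid$; taking $h=z\in\Zp\subseteq\hsmid$ yields $\norm{z}_{\hsmid}^{2}=0$, hence $z=0$. By the Hahn--Banach theorem this forces $\Phi(\hsmid)$ to be dense, so $\hat{\Phi}$ is surjective and $\Zm\cong\Zp\dual$ isometrically.

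The only real subtlety is making sure that ``completion of $\hsmid$ w.r.t.\ $\norm{\cdot}_{\Zm}$'' is well-defined, i.e.\ that $\norm{\cdot}_{\Zm}$ is actually a norm (not just a seminorm) on $\hsmid$. This amounts to the implication $\scprod{h}{z}_{\hsmid}=0$ for all $z\in\Zp \Longrightarrow h=0$, which follows from the density of $\Zp=D_{+}\cap D_{-}$ in $\hsmid$ already used above. Once this is noted, the remainder of the argument is bookkeeping and the corollary follows.
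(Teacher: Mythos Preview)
Your proof is correct and follows essentially the same approach as the paper: both invoke \Cref{th:Dplus-cap-Dminus-complete-with-Xplus-cap-Xminus-norm} for continuity of the embedding and \Cref{th:Dplus-cap-Dminus-dense-in-X0} for density, after which the conclusion is standard Gelfand triple theory. The paper's proof is terser---it simply cites these two results and appeals to ``ordinary'' Gelfand triple theory (or \Cref{th:quasi-gelfand-triple-charaterization})---whereas you spell out explicitly the isometry $\hsmid\to\Zp\dual$ and its surjectivity via Hahn--Banach, which is precisely the content of that deferred theory.
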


\begin{proof}
  By \Cref{th:Dplus-cap-Dminus-dense-in-X0} we know that $\Zp$ is dense in $\hsmid$ and by \Cref{th:Dplus-cap-Dminus-complete-with-Xplus-cap-Xminus-norm} that the mapping $\iota_{\Zp}\colon \Zp \to \hsmid$, $z \mapsto z$ is continuous. Hence, ``ordinary'' Gelfand triple theory or \Cref{th:quasi-gelfand-triple-charaterization} gives the assertion.
\end{proof}


\begin{theorem}
  We can structure preservingly embed the quasi Gelfand triple $(\hs,\hsmid,\hsdual)$ into the space $\Zp\dual$ by the embeddings
  \begin{equation*}
    \psi_{\hs} f = \dualprod{f}{\iota_{-}^{-1} \cdot}_{\hs,\hsdual},\quad
    \psi_{\hsmid} h = \scprod{h}{\cdot}_{\hsmid}
    \quad\text{and}\quad
    \psi_{\hsdual} g = \dualprod{g}{\iota_{+}^{-1} \cdot}_{\hsdual,\hs}.
  \end{equation*}
\end{theorem}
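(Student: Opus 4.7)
The plan is to verify three things: each $\psi_{\hs}, \psi_{\hsmid}, \psi_{\hsdual}$ is a well-defined, continuous, linear, injective map into $\Zp\dual$; and the compatibility conditions of \Cref{def:structure-preservingly-embedded} hold. Since $\Zp\dual$ is a Banach space it is automatically a Hausdorff topological vector space, so this will suffice.

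First I would verify well-definedness and continuity. For $z \in \Zp = D_{+} \cap D_{-}$ we have $\iota_{-}^{-1} z \in \hsdual$ with $\norm{\iota_{-}^{-1} z}_{\hsdual} = \norm{z}_{\hsdual} \leq \norm{z}_{\Zp}$, so $\abs{\psi_{\hs}(f)(z)} \leq \norm{f}_{\hs} \norm{z}_{\Zp}$ and hence $\psi_{\hs}(f) \in \Zp\dual$ with $\norm{\psi_{\hs}(f)}_{\Zp\dual} \leq \norm{f}_{\hs}$; linearity is immediate. The argument for $\psi_{\hsdual}$ is symmetric, using $\norm{\iota_{+}^{-1} z}_{\hs} \leq \norm{z}_{\Zp}$. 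For $\psi_{\hsmid}$, the continuous embedding $\Zp \hookrightarrow \hsmid$ (from \Cref{th:Dplus-cap-Dminus-complete-with-Xplus-cap-Xminus-norm}) gives $\abs{\scprod{h}{z}_{\hsmid}} \leq \norm{h}_{\hsmid}\norm{z}_{\hsmid} \leq C\norm{h}_{\hsmid}\norm{z}_{\Zp}$.

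Next I would verify the compatibility conditions. For $f \in D_{+}$ and $z \in \Zp$, applying \eqref{eq:quasi-gelfand-triple-condition} with $g = \iota_{-}^{-1} z$ gives $\dualprod{\iota_{-}^{-1}z}{f}_{\hsdual,\hs} = \scprod{z}{f}_{\hsmid}$; taking complex conjugates yields
\begin{equation*}
  \psi_{\hs}(f)(z) = \dualprod{f}{\iota_{-}^{-1}z}_{\hs,\hsdual} = \scprod{f}{z}_{\hsmid} = \psi_{\hsmid}(f)(z),
\end{equation*}
which is exactly $\psi_{\hs}\big\vert_{D_{+}} = \psi_{\hsmid}\big\vert_{D_{+}}$ under the identification $D_{+} = \ran \iota_{+}$. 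The identity $\psi_{\hsdual}\big\vert_{D_{-}} = \psi_{\hsmid}\big\vert_{D_{-}}$ follows symmetrically.

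Finally I would show injectivity, which I expect is the only step requiring care. If $\psi_{\hs}(f) = 0$, then $\dualprod{f}{\iota_{-}^{-1} z}_{\hs,\hsdual} = 0$ for all $z \in \Zp$, i.e.\ the functional $\dualprod{f}{\cdot}_{\hs,\hsdual}$ vanishes on $\iota_{-}^{-1}(D_{+}\cap D_{-})$. By \Cref{th:Dplus-Dminus-core-of-iota-adjun-iota} this set is dense in $\hsdual$; since $(\hs,\hsdual)$ is a complete dual pair, we conclude $f = 0$. Injectivity of $\psi_{\hsdual}$ uses the analogous density of $\iota_{+}^{-1}(D_{+}\cap D_{-})$ in $\hs$, and injectivity of $\psi_{\hsmid}$ follows from \Cref{th:Dplus-cap-Dminus-dense-in-X0}, which says $\Zp$ is dense in $\hsmid$. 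This reliance on the density results from the previous corollaries is what makes the Hilbert-space setting of this section essential; it is also the step I would expect to demand the most attention, since it is precisely where the quasi Gelfand triple structure (as opposed to arbitrary continuous embeddings) gets used.
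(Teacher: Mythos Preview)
Your proof is correct and follows essentially the same approach as the paper: the same estimates for well-definedness and continuity, the same density results (\Cref{th:Dplus-Dminus-core-of-iota-adjun-iota} and \Cref{th:Dplus-cap-Dminus-dense-in-X0}) for injectivity, and the same use of the quasi Gelfand triple identity for compatibility. The only cosmetic differences are that the paper verifies injectivity before compatibility and invokes $\iota_{+}\adjun = \iota_{-}^{-1}$ explicitly rather than \eqref{eq:quasi-gelfand-triple-condition}, which amounts to the same computation.
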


Note that by our identifications of $D_{+}$ and $D_{-}$ we have $\iota_{+}^{-1} z = z$ and $\iota_{-}^{-1}z = z$ for $z \in \Zp$. However, making this change of spaces visible can sometimes help. Nevertheless, most of the time this is only additional dead weight, this is why we will often just write $\phi_{\hs}(f)(z) = \dualprod{f}{z}_{\hs,\hsdual}$, etc..

Clearly, since $\Zp\dual$ and $\Zm$ are isometrically isomorphic we can also structure preservingly embed $(\hs,\hsmid,\hsdual)$ into $\Zm$. For notational harmony we prefer to use $\Zm$ instead of $\Zp\dual$. However, for our purpose there is no need to strictly distinguish between them, this is why we will use these symbols as synonyms. \Cref{fig:embedded-gelfand-triple} illustrates the meaning of the previous theorem.

\begin{proof}
  First we have to check that these mappings are well-defined: Let $z \in \Zp$, $f \in \hs$, $h \in \hsmid$ and $g \in \hsdual$. Then
  \begin{align*}
    \abs{\psi_{\hs}(f)(z)} &= \abs{\dualprod{f}{z}_{\hs,\hsdual}} \leq \norm{f}_{\hs} \norm{z}_{\hsdual} \leq \norm{f}_{\hs} \norm{z}_{\Zp}, \\
    \abs{\psi_{\hsmid}(h)(z)} &= \abs{\scprod{h}{z}_{\hsmid}} \leq \norm{h}_{\hsmid} \norm{z}_{\hsmid} \leq \norm{h}_{\hsmid} \norm{z}_{\Zp}, \\
    \abs{\psi_{\hsdual}(g)(z)} &= \abs{\dualprod{g}{z}_{\hsdual,\hs}} \leq \norm{g}_{\hsdual} \norm{z}_{\hs} \leq \norm{g}_{\hsdual} \norm{z}_{\Zp},
  \end{align*}
  which implies $\psi_{\hs}(f)$, $\psi_{\hsmid}(h)$ and $\psi_{\hsdual}(g)$ are in $\Zp\dual$, and $\psi_{\hs}$, $\psi_{\hsmid}$ and $\psi_{\hsdual}$ are continuous. The linearity of $\psi_{\hs}$, $\psi_{\hsmid}$ and $\psi_{\hsdual}$ follows from the sesquilinearity of a dual pairing. If $\psi_{\hs}(f) = 0$, then $f \perp \iota_{-}^{-1} \Zp = \iota_{-}^{-1} (D_{+} \cap D_{-}) = \dom \iota_{-}\adjun \iota_{-}$. Since $\dom \iota_{-}\adjun \iota_{-}$ is dense in $\hsdual$, we conclude $f=0$, which proves $\phi_{\hs}$ is injective. Analogously, we can show that $\psi_{\hsdual}$ is injective. If $\psi_{\hsmid}(h) = 0$, then $h \perp \Zp$. Since $\Zp$ is dense in $\hsmid$, $h$ has to be $0$, which gives the injectivity of $\psi_{\hsmid}$.
  The compatibility condition~\eqref{eq:structure-preserving-emdding-conditions} follows from
  \begin{align*}
    \psi_{\hsmid}\circ\iota_{+}(f)(z)
    &= \scprod{\iota_{+}f}{z}_{\hsmid} = \dualprod{f}{\iota_{+}\adjun z}_{\hs,\hsdual}
      = \dualprod{f}{\iota_{-}^{-1} z}_{\hs,\hsdual} = \psi_{\hs}(f)(z),
    \\
    \psi_{\hsmid}\circ\iota_{-}(g)(z)
    &= \scprod{\iota_{-}g}{z}_{\hsmid} = \dualprod{g}{\iota_{-}\adjun z}_{\hsdual,\hs}
      = \dualprod{g}{\iota_{+}^{-1} z}_{\hsdual,\hs} = \psi_{\hsdual}(g)(z).
      \ifSn{\tag*{\qedhere}}{\qedhere}
  \end{align*}
\end{proof}

Now since we can always structure preservingly embed a quasi Gelfand triple into $\Zm$ ($\Zp\dual$) we can regard this quasi Gelfand triple as subsets of $\Zm$, see \Cref{fig:embedded-gelfand-triple-venn-diagram}, and do not have to deal with all this embeddings (most of the time). However, we will not get completely rid of these embeddings, as they are sometimes helpful, but we can always regard them as identity mappings.

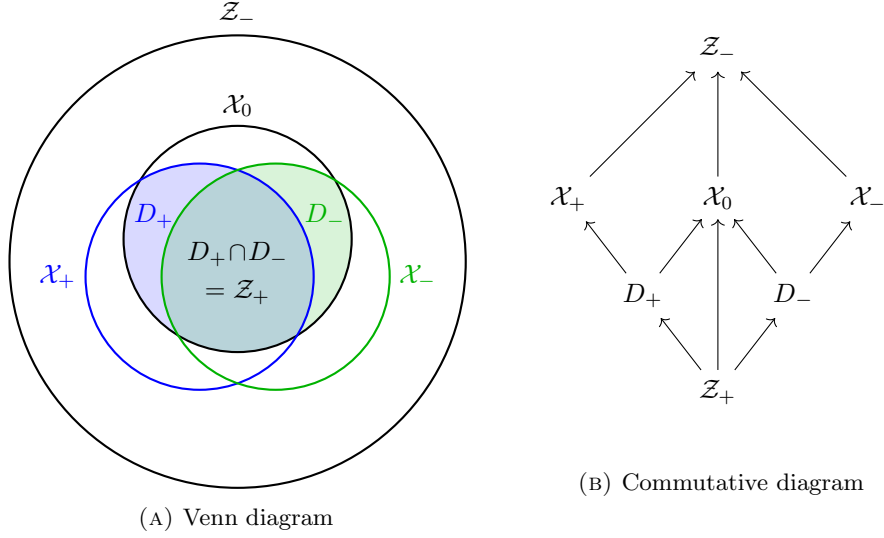
\begin{figure}[ht]
  \centering
  \begin{subfigure}{0.53\textwidth}
    \centering
    \begin{tikzpicture}
      \begin{scope}
        \clip (-0.5,-0.5) circle (1.5);
        \fill[blue,opacity=0.15] (0,0) circle (1.5);
      \end{scope}
      \begin{scope}
        \clip (0.5,-0.5) circle (1.5);
        \fill[black!30!green,opacity=0.15] (0,0) circle (1.5);
      \end{scope}
      \draw[thick](0,0) circle [radius=1.5];
      \draw[thick,blue](-0.5,-0.5) circle [radius=1.5];
      \draw[thick,black!30!green](0.5,-0.5) circle [radius=1.5];
      \node[left,blue] at (-2,-0.5) {$\hs$};
      \node[right,black!30!green] at (2,-0.5) {$\hsdual$};
      \node[above] at (0,1.5) {$\hsmid$};
      \node[blue] at (-1.1,0.3) {$D_{+}$};
      \node[black!30!green] at (1.15,0.3) {$D_{-}$};
      \node at (0,-0.2) {$D_{+}\!\cap\! D_{-}$};
      \node at (0,-0.7) {$= \Zp$};
      \draw[thick](0,-0.3) circle [radius=3];
      \node[above] at (0,2.7) {$\Zm$};
    \end{tikzpicture}
    \caption{Venn diagram}
    \label{fig:embedded-gelfand-triple-venn-diagram}
  \end{subfigure}
  \begin{subfigure}{0.46\textwidth}
    \centering
    \begin{tikzcd}[column sep=0.5em, row sep=2em]
      & & \Zm & & \\
      \\
      \hs\ar{rruu} & & \hsmid\ar{uu} & & \hsdual\ar{lluu} \\
      & D_{+}\ar{ru}\ar{lu} & & D_{-}\ar{lu}\ar{ru} & \\
      & & \Zp\ar{ru}\ar{lu}\ar{uu}& &
    \end{tikzcd}
    \vspace{1.5em}
    \caption{\label{fig:emdedded-gelfand-triple-commutative-diagram}Commutative diagram}
  \end{subfigure}
  \caption{quasi Gelfand triple embedded in $\Zm$}
  \label{fig:embedded-gelfand-triple}
\end{figure}

\begin{lemma}\label{th:Zm-equals-hs-plus-hsdual}
  $\Zm = \hs + \hsdual$ and
  \begin{equation*}
    \norm{h}_{\Zm} = \inf_{\substack{f+g = h \\ f \in \hs, g\in\hsdual}} \sqrt{\norm{f}_{\hs}^{2} + \norm{g}_{\hsdual}^{2}}.
  \end{equation*}
\end{lemma}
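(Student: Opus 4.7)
The plan is to establish both the set-theoretic identity $\Zm = \hs+\hsdual$ and the sharp norm formula simultaneously, by proving the upper bound via Cauchy--Schwarz and the matching lower bound (with attainment) via Riesz representation on $\Zp$ together with the duality map $\Psi$.

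First I would show $\hs + \hsdual \subseteq \Zm$ with $\norm{h}_{\Zm} \leq \inf\sqrt{\norm{f}_{\hs}^2+\norm{g}_{\hsdual}^2}$. Fix $f\in\hs$, $g\in\hsdual$, and evaluate $\psi_{\hs}(f)+\psi_{\hsdual}(g)\in\Zp\dual=\Zm$ on an arbitrary $z\in\Zp$: using $\abs{\dualprod{f}{z}_{\hs,\hsdual}}\leq \norm{f}_{\hs}\norm{z}_{\hsdual}$ and $\abs{\dualprod{g}{z}_{\hsdual,\hs}}\leq \norm{g}_{\hsdual}\norm{z}_{\hs}$, applying the Cauchy--Schwarz inequality in $\R^2$ to the vectors $(\norm{f}_{\hs},\norm{g}_{\hsdual})$ and $(\norm{z}_{\hsdual},\norm{z}_{\hs})$ gives the estimate $\abs{(\psi_{\hs}(f)+\psi_{\hsdual}(g))(z)}\leq \sqrt{\norm{f}_{\hs}^2+\norm{g}_{\hsdual}^2}\cdot\norm{z}_{\Zp}$. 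Taking the supremum over $z$ with $\norm{z}_{\Zp}\leq 1$ and then the infimum over all decompositions proves the upper bound.

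For the reverse inclusion, let $h\in\Zm=\Zp\dual$. By the Hilbert-space Riesz theorem there is $w\in\Zp$ with $h(z)=\scprod{w}{z}_{\Zp}=\scprod{w}{z}_{\hs}+\scprod{w}{z}_{\hsdual}$ and $\norm{h}_{\Zm}=\norm{w}_{\Zp}$. Since $w\in\Zp\subseteq\hs\cap\hsdual$, I set $f\coloneqq \Psi w\in\hs$ and $g\coloneqq \Psi^{-1}w\in\hsdual$. Using the identities $\dualprod{\cdot}{\cdot}_{\hs,\hsdual}=\scprod{\Psi^{-1}\cdot}{\cdot}_{\hsdual}$ and $\dualprod{\cdot}{\cdot}_{\hsdual,\hs}=\scprod{\Psi\cdot}{\cdot}_{\hs}$ from the start of the section, a direct computation yields $\psi_{\hs}(f)(z)=\scprod{w}{z}_{\hsdual}$ and $\psi_{\hsdual}(g)(z)=\scprod{w}{z}_{\hs}$, so that $\psi_{\hs}(f)+\psi_{\hsdual}(g)=h$ on $\Zp$. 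Unitarity of $\Psi$ gives $\norm{f}_{\hs}=\norm{w}_{\hsdual}$ and $\norm{g}_{\hsdual}=\norm{w}_{\hs}$, hence $\norm{f}_{\hs}^2+\norm{g}_{\hsdual}^2=\norm{w}_{\Zp}^2=\norm{h}_{\Zm}^2$, proving that the infimum is attained and that $\Zm\subseteq\hs+\hsdual$.

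The one step requiring real insight is spotting the correct decomposition; the general abstract duality $(A\cap B)\dual=A\dual+B\dual$ for a compatible couple only furnishes the sum-norm version of the infimum, whereas the $\sqrt{\cdot^2+\cdot^2}$ version reflects the Hilbert sum structure on $\Zp$, and matching it exactly demands the explicit candidate $(\Psi w,\Psi^{-1}w)$. Once this choice is made, both the pairing identity and the norm bookkeeping follow from the defining properties of $\Psi$.
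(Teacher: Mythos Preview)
Your proof is correct and is essentially the same as the paper's. The paper uses the duality map $\Phi\colon \Zm\to\Zp$ in place of your Riesz representative $w$ (so $w=\Phi h$), and then takes the same decomposition $f=\Psi\Phi h$, $g=\Psi^{-1}\Phi h$ and the same Cauchy--Schwarz estimate for the upper bound; the bookkeeping is identical.
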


\begin{proof}
  Note that $\Zp$ is a Hilbert space with $\scprod{z_{1}}{z_{2}}_{\Zp} = \scprod{z_{1}}{z_{2}}_{\hs} + \scprod{z_{1}}{z_{2}}_{\hsdual}$.
  Hence, there is a duality map $\Phi$ from $\Zm$ to $\Zp$ and we can write
  \begin{equation*}
    \dualprod{h}{z}_{\Zm,\Zp} = \scprod{\Phi h}{z}_{\Zp} = \scprod{\Phi h}{z}_{\hs} + \scprod{\Phi h}{z}_{\hsdual}.
  \end{equation*}
  Furthermore, with the duality map $\Psi$ from $\hsdual$ to $\hs$ we have
  \begin{equation*}
    \dualprod{h}{z}_{\Zm,\Zp} = \dualprod{\Psi^{-1} \Phi h}{z}_{\hsdual,\hs} + \dualprod{\Psi \Phi h}{z}_{\hs,\hsdual}
  \end{equation*}
  and $h = \Psi^{-1}\Phi h + \Psi \Phi h$ in $\Zm$, where $\Psi^{-1}\Phi h \in \hsdual$ and $\Psi\Phi h \in \hs$.

  Let $h \in \Zm$. Then for every $f \in \hs$, $g \in \hsdual$ that satisfy $h = f + g$ in $\Zm$ we have
  \begin{align*}
    \abs{\dualprod{h}{z}_{\Zm,\Zp}}
    &= \abs{\dualprod{f}{z}_{\hs,\hsdual} + \dualprod{g}{z}_{\hsdual,\hs}}
    \leq \abs{\dualprod{f}{z}_{\hs,\hsdual}}
      + \abs{\dualprod{g}{z}_{\hsdual,\hs}}
    \\
    &\leq \norm{f}_{\hs}\norm{z}_{\hsdual} + \norm{g}_{\hsdual} \norm{z}_{\hs}
    \\
    &\leq \sqrt{\norm{f}_{\hs}^{2} + \norm{g}_{\hsdual}^{2}} \sqrt{\norm{z}_{\hsdual}^{2} + \norm{z}_{\hs}^{2}}
    \\
    &= \sqrt{\norm{f}_{\hs}^{2} + \norm{g}_{\hsdual}^{2}} \norm{z}_{\Zp},
  \end{align*}
  which implies $\norm{h}_{\Zm} \leq \inf_{h = f + g} \sqrt{\norm{f}_{\hs}^{2} + \norm{g}_{\hsdual}^{2}}$.
  On the other hand
  \begin{align*}
    \norm{h}_{\Zm}^{2} = \norm{\Phi h}_{\Zp}^{2} = \norm{\Phi h}_{\hs}^{2} + \norm{\Phi h}_{\hsdual}^{2}
    =\norm{\Psi^{-1} \Phi h}_{\hsdual}^{2} + \norm{\Psi \Phi h}_{\hs}^{2}
  \end{align*}
  finishes the proof.
\end{proof}

The next result reinforces \Cref{def:intersections-of-members-of-quasi-Gelfand-triple}.

\begin{proposition}
  The intersection $\hs \cap \hsmid$ in $\Zm$ equals $D_{+}$, i.e., $\ran \psi_{\hs} \cap \ran \psi_{\hsmid} = \ran (\psi_{\hsmid}\circ \iota_{+})$, and the intersection $\hsdual \cap \hsmid$ in $\Zm$ equals $D_{-}$, i.e., $\ran \psi_{\hsdual} \cap \ran \psi_{\hsmid} = \ran (\psi_{\hsmid}\circ \iota_{-})$.
\end{proposition}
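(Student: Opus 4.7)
Plan. The inclusion $\ran(\psi_{\hsmid}\circ\iota_{+}) \subseteq \ran \psi_{\hs} \cap \ran \psi_{\hsmid}$ is immediate from the compatibility $\psi_{\hs}|_{D_{+}} = \psi_{\hsmid}\circ\iota_{+}$ verified just before the proposition, so only the reverse inclusion requires work. Let $\psi_{\hs}(f) = \psi_{\hsmid}(h)$ with $f \in \hs$ and $h \in \hsmid$. Evaluating at any $z \in \Zp = D_{+} \cap D_{-}$, and using $\iota_{\pm}^{-1}z = z$ under our identifications, yields
\begin{equation*}
  \dualprod{f}{z}_{\hs,\hsdual} = \scprod{h}{z}_{\hsmid} \qquad \text{for all } z\in\Zp.
\end{equation*}
By \Cref{le:iota-plus-adjun=iota-minus-inverse} it then suffices to show $h \in \dom\iota_{-}\adjun$ with $\iota_{-}\adjun h = f$, since then $f \in \ran\iota_{+}^{-1} = D_{+}$ and $\iota_{+}f = h$, placing $\psi_{\hs}(f) = \psi_{\hsmid}(\iota_{+}f)$ in $\ran(\psi_{\hsmid}\circ\iota_{+})$.

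The decisive step is to promote the identity above from $\Zp$ to every $z \in D_{-}$. By \Cref{th:Dplus-Dminus-core-of-iota-adjun-iota}, $\Zp$ is a core of $\iota_{-}$, so for any $z \in D_{-}$ there exists a sequence $(z_{n}) \subseteq \Zp$ converging to $z$ in the graph norm of $\iota_{-}$, i.e.\ simultaneously with respect to $\norm{\cdot}_{\hsdual}$ and $\norm{\cdot}_{\hsmid}$. Both sides of the starting identity pass to the limit — the left side via continuity of the complete dual pairing, the right side via continuity of the inner product — producing $\dualprod{f}{z}_{\hs,\hsdual} = \scprod{h}{z}_{\hsmid}$ for every $z \in D_{-}$. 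In particular $\abs{\scprod{h}{z}_{\hsmid}} \leq \norm{f}_{\hs}\norm{z}_{\hsdual}$ on $D_{-}$, which is precisely the criterion for $h \in \dom\iota_{-}\adjun = D_{+}$. Setting $f' \coloneqq \iota_{-}\adjun h = \iota_{+}^{-1}h \in \hs$, the relation $\iota_{+}\adjun = \iota_{-}^{-1}$ yields $\scprod{h}{z}_{\hsmid} = \scprod{\iota_{+}f'}{z}_{\hsmid} = \dualprod{f'}{z}_{\hs,\hsdual}$ for $z \in \Zp$. Subtracting gives $\dualprod{f-f'}{z}_{\hs,\hsdual} = 0$ on $\Zp$, and the density of $\Zp$ in $\hsdual$ (again from \Cref{th:Dplus-Dminus-core-of-iota-adjun-iota}) together with non-degeneracy of the complete dual pair forces $f = f'$, finishing the harder inclusion.

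The second identity $\ran\psi_{\hsdual}\cap\ran\psi_{\hsmid} = \ran(\psi_{\hsmid}\circ\iota_{-})$ is obtained by the completely symmetric argument, exchanging $(\hs,\iota_{+},D_{+})$ with $(\hsdual,\iota_{-},D_{-})$ and using that $\Zp$ is also a core of $\iota_{+}$. The only real obstacle lies in the extension step: mere $\norm{\cdot}_{\hsdual}$-density of $\Zp$ in $\hsdual$ would be insufficient, because one cannot then pass to the limit on the right-hand side, which is a priori only continuous in $\norm{\cdot}_{\hsmid}$. The core property supplied by \Cref{th:Dplus-Dminus-core-of-iota-adjun-iota} provides exactly the simultaneous approximation in both norms that bridges this gap.
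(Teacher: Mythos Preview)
Your proof is correct and follows essentially the same route as the paper: both arguments pivot on the core property from \Cref{th:Dplus-Dminus-core-of-iota-adjun-iota} to extend the identity $\dualprod{f}{z}_{\hs,\hsdual} = \scprod{h}{z}_{\hsmid}$ from $z \in \Zp$ to all $z \in \dom\iota_{-}$, and then read off $h \in \dom\iota_{-}\adjun = D_{+}$ with $\iota_{-}\adjun h = f$. The only minor difference is that once the identity holds on all of $\dom\iota_{-}$, the definition of the adjoint gives $\iota_{-}\adjun h = f$ directly, so your separate density argument for $f = f'$ is not needed (though it is not wrong).
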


\begin{proof}
  Let $h \in \hs\cap\hsmid \subseteq \Zm$, i.e., it exists an $f \in \hs$ and a $k \in \hsmid$ such that
  \begin{equation*}
    \dualprod{h}{z}_{\Zm,\Zp} = \dualprod{f}{\iota_{-}^{-1}z}_{\hs,\hsdual} = \scprod{k}{z}_{\hs}\quad\text{for all}\quad z \in \Zp = D_{+} \cap D_{-}.
  \end{equation*}
  We define $x = \iota_{-}^{-1}z$, which leads to
  \begin{equation*}
    \dualprod{f}{x}_{\hs,\hsdual} = \scprod{k}{\iota_{-}x}_{\hs}\quad\text{for all}\quad x \in \iota_{-}^{-1}(D_{+} \cap D_{-}).
  \end{equation*}
  Since $\iota_{-}^{-1}(D_{+} \cap D_{-})$ is a core of $\iota_{-}$ (\Cref{th:Dplus-Dminus-core-of-iota-adjun-iota}), this equation is also true for all $x \in \dom \iota_{-}$.
  Moreover, this implies $f = \iota_{-}\adjun k$ and $k \in \dom \iota_{-}\adjun = D_{+}$. By $\iota_{-}\adjun = \iota_{+}^{-1}$ we obtain $\iota_{+}f = k \in D_{+}$ and
  \begin{equation*}
    \dualprod{h}{z}_{\Zm,\Zp} = \dualprod{f}{\iota_{-}^{-1} z}_{\hs,\hsdual} = \scprod{k}{z}_{\hsmid} = \scprod{\iota_{+}f}{z}_{\hsmid},
  \end{equation*}
  which gives $h = f = k = \iota_{+} f$ in $\Zm$ and $h \in D_{+}$.

  The same steps can also be done for $\hsdual$.
\end{proof}

\begin{theorem}\label{th:X-plus-cap-X-minus-in-X-0}
  The intersection $\hs\cap \hsdual$ in $\Zm$ is $D_{+}\cap D_{-}(= \Zp)$,
  i.e.,
  \[
    \ran \psi_{\hs} \cap \ran \psi_{\hsdual}
    = \ran (\psi_{\hsmid}\circ \iota_{+}) \cap \ran (\psi_{\hsmid} \circ \iota_{-})
    = \psi_{\hsmid}(\Zp).
  \]
\end{theorem}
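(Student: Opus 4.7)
The inclusion $\psi_{\hsmid}(\Zp) \subseteq \ran \psi_{\hs} \cap \ran \psi_{\hsdual}$ and the middle equality $\ran(\psi_{\hsmid}\circ\iota_{+}) \cap \ran(\psi_{\hsmid}\circ\iota_{-}) = \psi_{\hsmid}(\Zp)$ are routine, reducing to injectivity of $\psi_{\hsmid}$ together with $D_{+}\cap D_{-} = \Zp$ and the compatibility identities $\psi_{\hsmid}\circ\iota_{+} = \psi_{\hs}|_{D_{+}}$, $\psi_{\hsmid}\circ\iota_{-} = \psi_{\hsdual}|_{D_{-}}$. The substance is the forward inclusion $\ran \psi_{\hs} \cap \ran \psi_{\hsdual} \subseteq \psi_{\hsmid}(\Zp)$. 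The strategy is to invoke the preceding proposition, which gives $\ran\psi_{\hs}\cap\ran\psi_{\hsmid} = \psi_{\hsmid}(D_{+})$ and $\ran\psi_{\hsdual}\cap\ran\psi_{\hsmid} = \psi_{\hsmid}(D_{-})$: once I show that any $h \in \ran\psi_{\hs}\cap\ran\psi_{\hsdual}$ also lies in $\ran\psi_{\hsmid}$, it follows that $h \in \psi_{\hsmid}(D_{+})\cap\psi_{\hsmid}(D_{-}) = \psi_{\hsmid}(D_{+}\cap D_{-}) = \psi_{\hsmid}(\Zp)$.

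To produce this $\hsmid$-preimage, fix $f\in\hs$, $g\in\hsdual$ with $\psi_{\hs}(f) = \psi_{\hsdual}(g) = h$. Testing against $z \in \Zp$ gives
\[
  \dualprod{f}{\iota_{-}^{-1}z}_{\hs,\hsdual} = \dualprod{g}{\iota_{+}^{-1}z}_{\hsdual,\hs}.
\]
I would reparametrize by $\tilde z \coloneqq \iota_{+}^{-1}z$. By \Cref{th:Dplus-Dminus-core-of-iota-adjun-iota} the set $\iota_{+}^{-1}\Zp$ is exactly $\dom \iota_{+}\adjun \iota_{+}$ and is a core of $\iota_{+}$. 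Using $\iota_{+}\adjun = \iota_{-}^{-1}$ (\Cref{le:iota-plus-adjun=iota-minus-inverse}), the relation $\iota_{+}\tilde z = z$ converts the equality into
\[
  \dualprod{f}{\iota_{+}\adjun \iota_{+}\tilde z}_{\hs,\hsdual} = \dualprod{g}{\tilde z}_{\hsdual,\hs} \quad\text{for all}\quad \tilde z\in \dom \iota_{+}\adjun\iota_{+}.
\]
Switching to Hilbert-space adjoints via $\iota_{+}\hadjun = \Psi\iota_{+}\adjun$, and since $\dualprod{g}{\tilde z}_{\hsdual,\hs} = \scprod{\Psi g}{\tilde z}_{\hs}$, the identity becomes $\scprod{f}{\iota_{+}\hadjun \iota_{+}\tilde z}_{\hs} = \scprod{\Psi g}{\tilde z}_{\hs}$.

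The right-hand side is continuous in $\tilde z$ with respect to $\norm{\cdot}_{\hs}$, so $f$ lies in the domain of the $(\hs,\hs)$-adjoint of $\iota_{+}\hadjun \iota_{+}$. Now \Cref{th:TTadjun-self-adjoint} tells us $\iota_{+}\hadjun \iota_{+}$ is self-adjoint, so its adjoint domain coincides with its own domain, giving $f \in \dom \iota_{+}\hadjun\iota_{+} = \iota_{+}^{-1}\Zp$, i.e.\ $k \coloneqq \iota_{+}f \in \Zp$. A direct check using $\iota_{+}\adjun z = \iota_{-}^{-1}z$ for $z \in \Zp\subseteq D_{-}$ then yields
\[
  \psi_{\hsmid}(k)(z) = \scprod{\iota_{+}f}{z}_{\hsmid} = \dualprod{f}{\iota_{+}\adjun z}_{\hs,\hsdual} = \dualprod{f}{\iota_{-}^{-1}z}_{\hs,\hsdual} = \psi_{\hs}(f)(z),
\]
so $h = \psi_{\hsmid}(k) \in \psi_{\hsmid}(\Zp)$. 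The main obstacle is bookkeeping: I have to move fluidly between the dual-pair adjoint $\adjun$ (natural for the pairing $\dualprod{\cdot}{\cdot}_{\hsdual,\hs}$) and the Hilbert adjoint $\hadjun$ (natural for invoking self-adjointness), and I must be sure that the reparametrization $\tilde z = \iota_{+}^{-1}z$ really sweeps out the full domain of the self-adjoint operator $\iota_{+}\hadjun \iota_{+}$ so that the adjoint-domain criterion yields membership rather than merely continuity on a proper subdomain.
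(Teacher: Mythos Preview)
Your proof is correct and follows essentially the same route as the paper's: reparametrize by $\tilde z = \iota_{+}^{-1}z$, rewrite the identity as $\dualprod{f}{\iota_{+}\adjun\iota_{+}\tilde z}_{\hs,\hsdual} = \dualprod{g}{\tilde z}_{\hsdual,\hs}$ on the full domain $\dom \iota_{+}\adjun\iota_{+}$, and use self-adjointness to conclude $f\in\dom\iota_{+}\adjun\iota_{+}$, hence $\iota_{+}f\in\Zp$. The only cosmetic difference is that the paper stays with the dual-pair adjoint and invokes \Cref{th:TadjunT-selfadjoint-for-dual-pair} directly, while you pass through $\Psi$ to the Hilbert adjoint $\iota_{+}\hadjun\iota_{+}$ and use \Cref{th:TTadjun-self-adjoint}; since the former proposition is proved by precisely this reduction, the arguments coincide.
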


This means that area of $\hs \cap \hsdual$ in \Cref{fig:embedded-gelfand-triple} outside of $\hsmid$ is actually empty.

\begin{proof}
  Let $h \in \hs \cap \hsdual \subseteq \Zm$, i.e., it exists an $f \in \hs$ and a $g \in \hsdual$ such that
  \begin{equation*}
    \dualprod{h}{z}_{\Zm,\Zp}
    = \dualprod{f}{\iota_{-}^{-1} z}_{\hs,\hsdual}
    = \dualprod{g}{\iota_{+}^{-1} z}_{\hsdual,\hs}
    \quad\text{for all}\quad
    z \in D_{+}\cap D_{-}.
  \end{equation*}
  We define $x \coloneqq \iota_{+}^{-1}z$, which leads to $z = \iota_{+} x$. Since $z \in \dom \iota_{-}^{-1}$, we have $x\in \dom \iota_{-}^{-1}\iota_{+}$.
  Recall that $\iota_{-}^{-1} = \iota_{+}\adjun$ and $\iota_{+}^{-1} \Zp = \iota_{+}^{-1}(D_{+}\cap D_{-}) = \dom \iota_{+}\adjun \iota_{+}$ 
  (see \Cref{le:iota-plus-adjun=iota-minus-inverse} and \Cref{th:Dplus-Dminus-core-of-iota-adjun-iota}).
  Hence,
  \begin{equation*}
    \dualprod{f}{\iota_{+}\adjun \iota_{+} x}_{\hs,\hsdual}
    = \dualprod{g}{x}_{\hsdual,\hs}
    \quad\text{for all}\quad
    x \in \dom \iota_{+}\adjun \iota_{+},
  \end{equation*}
  which implies $(\iota_{+}\adjun \iota_{+})\adjun f = g$ and $f \in \dom (\iota_{+}\adjun\iota_{+})\adjun$. By \Cref{th:TadjunT-selfadjoint-for-dual-pair} $(\iota_{+}\adjun \iota_{+})\adjun = \iota_{+}\adjun \iota_{+}$ and therefore $f \in \dom \iota_{+}\adjun \iota_{+}$ and in particular, $\iota_{+} f \in \iota_{+}(\dom \iota_{+}\adjun \iota_{+}) = D_{+}\cap D_{-}$.
  Note that again by $\iota_{-}^{-1} = \iota_{+}\adjun$ we have $\iota_{-}^{-1}\iota_{+} f = g$.
  Thus, $g \in \dom \iota_{-}$ and $\iota_{+} f = \iota_{-} g$. This gives
  \begin{equation*}
    \dualprod{h}{z}_{\Zm,\Zp} = \scprod{\iota_{+}f}{z}_{\hsmid} = \scprod{\iota_{-} g}{z}_{\hsmid}.
  \end{equation*}
  Therefore, $h = f = g = \iota_{+} f = \iota_{-} g$ in $\Zm$.
\end{proof}

\begin{corollary}\label{th:norm-in-Zm-equals-inf}
  For $f \in \hs$ and $g \in \hsdual$ we have
  \begin{align*}
    \norm{f + g}_{\Zm} = \inf_{z \in \Zp} \sqrt{\norm{f + z}_{\hs}^{2} + \norm{g - z}_{\hsdual}^{2}}.
  \end{align*}
\end{corollary}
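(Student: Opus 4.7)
The plan is to derive the corollary as a direct consequence of the two earlier structural results, namely \Cref{th:Zm-equals-hs-plus-hsdual} (which characterizes $\norm{\cdot}_{\Zm}$ as an infimum over decompositions in $\hs + \hsdual$) and \Cref{th:X-plus-cap-X-minus-in-X-0} (which identifies the intersection $\hs \cap \hsdual$ inside $\Zm$ with $\Zp$). The key observation is that the infimum in \Cref{th:Zm-equals-hs-plus-hsdual} is over \emph{all} decompositions of $h = f+g$ as a sum of an element of $\hs$ and an element of $\hsdual$, and I want to parametrize such decompositions by $\Zp$.

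Concretely, I would fix $f \in \hs$, $g \in \hsdual$ and set $h \coloneqq f+g \in \Zm$. By \Cref{th:Zm-equals-hs-plus-hsdual} we have $\norm{h}_{\Zm} = \inf \sqrt{\norm{\tilde f}_{\hs}^2 + \norm{\tilde g}_{\hsdual}^2}$, where the infimum runs over all $\tilde f \in \hs$, $\tilde g \in \hsdual$ with $\tilde f + \tilde g = h$ in $\Zm$. Given such a decomposition, I would set $z \coloneqq \tilde f - f$, computed inside $\Zm$. Then $z = \tilde f - f \in \hs$ and simultaneously $z = g - \tilde g \in \hsdual$ (both identities in $\Zm$), so $z \in \hs \cap \hsdual$ as a subset of $\Zm$. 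By \Cref{th:X-plus-cap-X-minus-in-X-0} this intersection equals $\Zp$, so $z \in \Zp$ and $\tilde f = f + z$, $\tilde g = g - z$.

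Conversely, for any $z \in \Zp \subseteq \hs \cap \hsdual$, the pair $\tilde f \coloneqq f + z \in \hs$, $\tilde g \coloneqq g - z \in \hsdual$ satisfies $\tilde f + \tilde g = f + g = h$ in $\Zm$. Hence the set of admissible decompositions is in bijection with $\Zp$, and the infimum in \Cref{th:Zm-equals-hs-plus-hsdual} becomes
\[
\norm{f+g}_{\Zm} = \inf_{z \in \Zp} \sqrt{\norm{f+z}_{\hs}^{2} + \norm{g-z}_{\hsdual}^{2}},
\]
which is exactly the claim. There is no real obstacle here; the only subtlety is the careful bookkeeping that ensures the identifications $\tilde f - f = g - \tilde g$ really do land in $\hs \cap \hsdual$ inside $\Zm$ (rather than only in $\Zm$), so that \Cref{th:X-plus-cap-X-minus-in-X-0} applies and yields $z \in \Zp$.
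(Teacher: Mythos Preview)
Your proof is correct and follows essentially the same approach as the paper: invoke \Cref{th:Zm-equals-hs-plus-hsdual}, observe that any two decompositions of $f+g$ differ by an element of $\hs\cap\hsdual$, and then use \Cref{th:X-plus-cap-X-minus-in-X-0} to identify this intersection with $\Zp$. The only cosmetic difference is a sign convention for $z$ (the paper sets $z = f - \tilde f$ and then substitutes $z \mapsto -z$ at the end), and you spell out the converse parametrization slightly more explicitly than the paper does.
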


\begin{proof}
  By \Cref{th:Zm-equals-hs-plus-hsdual} we have
  \begin{align*}
    \norm{f+g}_{\Zm} = \inf_{\substack{\tilde{f}+\tilde{g}=f+g \\ \tilde{f} \in \hs, \tilde{g} \in \hsdual}} \sqrt{\norm{\tilde{f}}_{\hs}^{2} + \norm{\tilde{g}}_{\hsdual}^{2}}
  \end{align*}
  Note that $f + g = \tilde{f} + \tilde{g}$ implies
  \begin{align*}
    z \coloneqq \underbrace{f - \tilde{f}}_{\in\mathrlap{\hs}} = - \underbrace{(g - \tilde{g})}_{\in\mathrlap{\hsdual}} \in \hs\cap\hsdual.
  \end{align*}
  We can write $\tilde{f} = f - z$ and $\tilde{g} = f + z$ and by \Cref{th:X-plus-cap-X-minus-in-X-0} we have $z \in \Zp$. Consequently,
  \begin{equation*}
    \norm{f + g} = \inf_{z \in \Zp} \sqrt{\norm{f - z}_{\hs}^{2} + \norm{g + z}_{\hsdual}^{2}}
    = \inf_{z \in \Zp} \sqrt{\norm{f + z}_{\hs}^{2} + \norm{g - z}_{\hsdual}^{2}}.
    \ifSn{\tag*{\qedhere}}{\qedhere}
  \end{equation*}
\end{proof}

The space $\Zm$ is the smallest space where we can embed the quasi Gelfand triple structure preservingly.
The following theorem makes this statement precise.

\begin{theorem}\label{th:Zm-is-an-initial-object}
  Let $\mathcal{H}$ be a Hausdorff topological vector space such that we can structure preservingly embed the quasi Gelfand triple $(\hs,\hsmid,\hsdual)$ into $\mathcal{H}$ and let $\phi_{\hs}$, $\phi_{\hsmid}$ and $\phi_{\hsdual}$ denote the embeddings.
  Then also $\Zm$ can be continuously embedded into $\mathcal{H}$ by a mapping $\phi_{\Zm}$, such that
  \begin{equation*}
    \phi_{\Zm}\circ \psi_{\hs} = \phi_{\hs},
    \quad
    \phi_{\Zm}\circ \psi_{\hsmid} = \phi_{\hsmid}
    \quad\text{and}\quad
    \phi_{\Zm}\circ \psi_{\hsdual} = \phi_{\hsdual},
  \end{equation*}
  i.e., the following diagram commutes.
  \begin{equation*}
    \begin{tikzcd}[column sep=0.5em]
      & & \mathcal{H} & & \\
      & & \Zm \ar{u} & & \\
      \hs\ar{rru}\ar{rruu} & & \hsmid\ar{u}\ar[bend right=25]{uu} & & \hsdual\ar{llu}\ar{lluu} \\
      & D_{+}\ar{ru}\ar{lu} & & D_{-}\ar{lu}\ar{ru} & \\
      & & \Zp\ar{ru}\ar{lu}\ar{uu}& &
    \end{tikzcd}
  \end{equation*}
\end{theorem}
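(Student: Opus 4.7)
The plan is to exploit the additive decomposition $\Zm = \hs + \hsdual$ from \Cref{th:Zm-equals-hs-plus-hsdual} and to define, for $h \in \Zm$ and any splitting $h = f + g$ with $f \in \hs$, $g \in \hsdual$,
\begin{equation*}
  \phi_{\Zm}(h) \coloneqq \phi_{\hs}(f) + \phi_{\hsdual}(g).
\end{equation*}
The first thing to check is well-definedness. If $h = f + g = \tilde f + \tilde g$ are two such splittings, then $z \coloneqq f - \tilde f = \tilde g - g$ lies in $\hs \cap \hsdual$ (intersection taken inside $\Zm$), which by \Cref{th:X-plus-cap-X-minus-in-X-0} coincides with $\Zp = D_{+}\cap D_{-}$. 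On $\Zp$ the structure-preservation conditions from \Cref{def:structure-preservingly-embedded} force $\phi_{\hs}(z) = \phi_{\hsmid}(z) = \phi_{\hsdual}(z)$, so $\phi_{\hs}(f) - \phi_{\hs}(\tilde f) = \phi_{\hs}(z) = \phi_{\hsdual}(z) = \phi_{\hsdual}(\tilde g) - \phi_{\hsdual}(g)$. Linearity of $\phi_{\Zm}$ is then automatic.

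Continuity is most cleanly handled via the canonical decomposition $h = \Psi\Phi h + \Psi^{-1}\Phi h$ used in the proof of \Cref{th:Zm-equals-hs-plus-hsdual}, which realises the infimum and satisfies $\norm{\Psi\Phi h}_{\hs}^{2} + \norm{\Psi^{-1}\Phi h}_{\hsdual}^{2} = \norm{h}_{\Zm}^{2}$. Given a neighbourhood $V$ of $0$ in $\mathcal{H}$, I would choose a balanced $V'$ with $V' + V' \subseteq V$; continuity of $\phi_{\hs}$ and $\phi_{\hsdual}$ then supplies an $\epsilon > 0$ such that both $\phi_{\hs}$ and $\phi_{\hsdual}$ send their open $\epsilon$-balls into $V'$, and the above norm identity forces $\phi_{\Zm}$ to send the open $\epsilon$-ball of $\Zm$ into $V$. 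The commutativity assertions are direct checks: for $f \in \hs$, the splitting $f + 0$ gives $\phi_{\Zm}\circ\psi_{\hs} = \phi_{\hs}$, and the same argument handles $\psi_{\hsdual}$. For $\psi_{\hsmid}$ I would use \Cref{co:Dplus-plus-Dminus-pivot-space} to write $h \in \hsmid$ as $h = f + g$ with $f \in D_{+}$ and $g \in D_{-}$, and then invoke compatibility of $\phi_{\hs}$ on $D_{+}$ and of $\phi_{\hsdual}$ on $D_{-}$ with $\phi_{\hsmid}$.

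The main obstacle—the step justifying the word ``embedding''—is injectivity of $\phi_{\Zm}$. Suppose $\phi_{\Zm}(h) = 0$ with canonical splitting $h = f + g$, so that $\phi_{\hs}(f) = -\phi_{\hsdual}(g)$ in $\mathcal{H}$; the task is to conclude $f \in \Zp$ and $g = -f$, forcing $h = 0$. I expect the argument to mirror \Cref{th:X-plus-cap-X-minus-in-X-0} internally in $\mathcal{H}$: approximating $f$ by a sequence $(z_n) \subseteq \Zp$ in $\hs$ yields $\phi_{\hsdual}(z_n + g) = -\phi_{\hs}(f - z_n) \to 0$ in $\mathcal{H}$, and combining this with a parallel approximation of $g$ in $\hsdual$ together with the closedness of $\iota_{+}$ and $\iota_{-}$ (cf.\ \Cref{th:iota-p-iota-m-closed}) should pin down $f = -g \in \Zp$. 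Making this approximation argument rigorous inside an arbitrary Hausdorff topological vector space, without access to an ambient duality or seminorm separation, is the delicate point.
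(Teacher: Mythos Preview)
Your construction of $\phi_{\Zm}$ via the splitting $h = f + g \mapsto \phi_{\hs}(f) + \phi_{\hsdual}(g)$, together with your arguments for well-definedness, continuity (via the canonical decomposition $h = \Psi\Phi h + \Psi^{-1}\Phi h$), and the three commutativity relations, is correct and yields the same map as the paper. The paper takes a different route to continuity: rather than splitting elements of $\Zm$ directly, it works on the dense subspace $\hsmid$ and compares $\norm{\cdot}_{\Zm}$ with the quotient norm on $\phi_{\hs}(\hs) + \phi_{\hsdual}(\hsdual) \cong (\hs \times \hsdual)/\ker\Lambda$, where $\Lambda(f,g) = \phi_{\hs}(f) + \phi_{\hsdual}(g)$; continuity of $\Lambda$ makes $\ker\Lambda$ closed, the quotient a Banach space continuously included in $\mathcal{H}$, and \Cref{th:norm-in-Zm-equals-inf} gives the needed norm inequality on $\hsmid$. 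The map $\phi_{\hsmid}\circ\psi_{\hsmid}^{-1}$ is then extended by density. Your direct approach is arguably cleaner.

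You are right to flag injectivity as the crux, and your candour about the difficulty is well placed: your proposed approximation argument does not work as stated, because $\phi_{\hsdual}(z_n + g) \to 0$ in $\mathcal{H}$ cannot be pulled back to convergence of $z_n + g$ in $\hsdual$ or $\hsmid$ (the embedding $\phi_{\hsdual}$ need not be a homeomorphism onto its image, and $\mathcal{H}$ carries no norm to exploit), so the closedness of $\begin{bmatrix}\iota_{+} & \iota_{-}\end{bmatrix}$ never engages. It is worth observing, however, that the paper's own proof does not address injectivity of $\phi_{\Zm}$ either: it constructs the continuous extension, verifies the commutativity relations, and stops. Injectivity of $\phi_{\Zm}$ is in fact equivalent to the assertion $\phi_{\hs}(\hs)\cap\phi_{\hsdual}(\hsdual) = \phi_{\hsmid}(\Zp)$ in $\mathcal{H}$, which is precisely the content of the corollary immediately following this theorem---and that corollary is proved \emph{using} the embedding just constructed. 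So on this point you have not fallen short of the paper; you have identified a subtlety that the paper's proof leaves implicit.
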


\begin{proof}
  \newcommand{\haths}{\hat{\mathcal{X}}_{+}}
  \newcommand{\hathsmid}{\hat{\mathcal{X}}_{0}}
  \newcommand{\hathsdual}{\hat{\mathcal{X}}_{-}}
  \newcommand{\hatZp}{\hat{\mathcal{Z}}_{+}}
  \newcommand{\hatZm}{\hat{\mathcal{Z}}_{-}}
  Recall that we can assume that $\hs,\hsmid,\hsdual \subseteq \Zm$ and $\psi_{\hs}f = f$, $\psi_{\hsmid} h = h$, and $\psi_{\hsdual} g = g$, by simply replacing the quasi Gelfand triple $(\hs,\hsmid,\hsdual)$ by $(\psi_{\hs}(\hs),\psi_{\hsmid}(\hsmid), \psi_{\hsdual}(\hsdual))$, see \Cref{fig:embedded-gelfand-triple}.

  For convenience we define $\haths = \phi_{\hs}(\hs)$, $\hathsmid = \phi_{\hsmid}(\hsmid)$ and $\hathsdual = \phi_{\hsdual}(\hsdual)$ with $\norm{f}_{\haths} = \norm{\phi_{\hs}^{-1} f}_{\hs}$, $\norm{h}_{\hathsmid} = \norm{\phi_{\hsmid}^{-1} h}_{\hsmid}$ and $\norm{g}_{\hathsdual} = \norm{\phi_{\hsdual}^{-1} g}_{\hsdual}$.

  We will show as a first step that we can endow $\haths + \hathsdual$ in $\mathcal{H}$ with $\norm{h}_{\haths + \hathsdual} = \inf_{f+g=h} \sqrt{\norm{f}_{\haths}^{2} + \norm{g}_{\hathsdual}^{2}}$ such that the corresponding topology of $\norm{\cdot}_{\haths + \hathsdual}$ is finer than the topology $\mathcal{T}_{\mathcal{H}}$ of $\mathcal{H}$ (i.e., whenever $(h_{n})_{n\in\N}$ converges w.r.t.\ $\norm{\cdot}_{\haths + \hathsdual}$, it also converges w.r.t.\ $\mathcal{T}_{\mathcal{H}}$).
  Note that we can alternatively write the norm as
  \begin{align*}
    \norm{f + g}_{\haths + \hathsdual}
    &= \inf \dset*{\sqrt{\norm{\tilde{f}}_{\haths}^{2} + \norm{\tilde{g}}_{\hathsdual}^{2}}}{\tilde{f} + \tilde{g} = f + g} \\
    &= \inf \dset*{\sqrt{\norm{f + z}_{\haths}^{2} + \norm{g - z}_{\hathsdual}^{2}}}{z \in \hs \cap \hsdual}.
  \end{align*}
  Moreover, the mapping
  \begin{align*}
    \Lambda\colon \mapping{\hs \times \hsdual}{\mathcal{H}}{
    \begin{bsmallmatrix}
      f \\ g
    \end{bsmallmatrix}
    }{\phi_{\hs}f + \phi_{\hsdual}g,}
  \end{align*}
  is continuous as composition of the continuous embeddings into $\mathcal{H}$ and the continuous addition in $\mathcal{H}$. Hence, $\ker \Lambda$ is closed in $\hs\times\hsdual$ and the quotient space $\quspace{\hs\times\hsdual}{\ker \Lambda}$ is a Hilbert space and is isometrically isomorphic to $\haths + \hathsdual$ with $\norm{\cdot}_{\haths + \hathsdual}$.
  The quotient mapping $\quspace{\Lambda}{\ker \Lambda}\colon \quspace{\hs \times \hsdual}{\ker \Lambda} \to \mathcal{H}$ is injective and continuous, which implies that topology of $\norm{\cdot}_{\haths+\hathsdual}$ is finer than the trace topology of $\mathcal{T}_{\mathcal{H}}$ on $\haths + \hathsdual$.

  We can regard $\hatZp \coloneqq \phi_{\hsmid} (\Zp) \subseteq \hathsmid \subseteq \mathcal{H}$ and endow this space with
  \begin{align*}
    \norm{z}_{\hatZp}
    \coloneqq \sqrt{\norm{z}_{\haths}^{2} + \norm{z}_{\hathsdual}^{2}} = \norm{\phi_{\hsmid}^{-1} z}_{\Zp}
    \quad\text{for}\quad z \in \hatZp.
  \end{align*}
  Furthermore, we can define a new norm on $\hsmid$ by
  \(
  \norm{h}_{\hatZm} \coloneqq \sup_{z \in \hatZp\setminus\sset{0}}\frac{\abs{\scprod{h}{z}}_{\hathsmid}}{\norm{z}_{\hatZp}}
  \).
  Note that every $h \in \hsmid$ can be written as $h = f + g$, where $f \in D_{+}$ and $g \in D_{-}$, see \Cref{co:Dplus-plus-Dminus-pivot-space}.
  Hence, also every $h \in \hathsmid$ can be written as $h = f + g$, where $f \in \phi_{\hsmid}(D_{+}) = \phi_{\hs}(D_{+}) \subseteq \haths \cap \hathsmid$ and $g \in \phi_{\hsmid}(D_{-}) = \phi_{\hsdual}(D_{-}) \subseteq \hathsmid \cap \hathsdual$.
  We know by \Cref{th:norm-in-Zm-equals-inf} for every $f + g \in \hathsmid$ ($f\in \phi_{\hsmid}(D_{+})$, $g\in \phi_{\hsmid}(D_{-})$) that
  \begin{align*}
    \norm{f + g}_{\hatZm}
    &= \norm{\phi_{\hsmid}^{-1}(f + g)}_{\Zm}
      = \inf_{z \in \Zp} \sqrt{\norm{\phi_{\hsmid}^{-1}(f) + z}_{\hs}^{2} + \norm{\phi_{\hsmid}^{-1}(g) - z}_{\hsdual}^{2}}
    \\
    &= \inf_{z \in \hatZp} \sqrt{\vphantom{\norm{\cdot}_{\hs}}
      \smash[b]{
      \norm{\underbrace{\phi_{\hsmid}^{-1}(f) + \phi_{\hsmid}^{-1}(z)}_{=\mathrlap{\phi_{\hs}^{-1}(f + z)}}}_{\hs}^{2}
      + \norm{\underbrace{\phi_{\hsmid}^{-1}(g) - \phi_{\hsmid}^{-1}(z)}_{=\mathrlap{\phi_{\hsdual}^{-1}(g - z)}}}_{\hsdual}^{2}}}
      \vphantom{\norm{\underbrace{\phi_{\hsmid}^{-1}(f) + \phi_{\hsmid}^{-1}(z)}_{=\mathrlap{\phi_{\hs}^{-1}(f + z)}}}_{\hs}^{2}}
    \\
    &= \inf_{z \in \hatZp} \sqrt{\norm{f + z}_{\haths}^{2} + \norm{g - z}_{\hathsdual}^{2}}
    \\
    &\geq \inf_{z \in \hs\cap\hsdual} \sqrt{\norm{f + z}_{\haths}^{2} + \norm{g - z}_{\hathsdual}^{2}}
      = \norm{f + g}_{\haths + \hathsdual},
  \end{align*}
  because $\hatZp \subseteq \haths \cap \hathsdual$. Hence, the completion of $\hathsmid$ w.r.t.\ $\norm{\cdot}_{\hatZm}$ can also be continuously embedded into $\haths + \hathsdual$, because $\haths + \hathsdual$ is complete, and therefore also into $\mathcal{H}$. In particular the mapping ($\psi_{\hsmid}$ does not do anything by assumption)
  \begin{equation*}
    \phi_{\hsmid} \circ \psi_{\hsmid}^{-1} \colon \psi_{\hsmid}(\hsmid) \subseteq \Zm \to \mathcal{H}
  \end{equation*}
  is continuous w.r.t.\ the $\norm{\cdot}_{\Zm}$ topology on $\psi_{\hsmid}(\hsmid)$ and $\mathcal{T}_{\mathcal{H}}$ on $\mathcal{H}$ and injective. By the density of $\hsmid$ in $\Zm$ we can continuously extend this mapping, denoted by
  \begin{equation*}
    \phi_{\Zm} \coloneqq \cl{\phi_{\hsmid} \circ \psi_{\hsmid}^{-1}} \colon \Zm \to \mathcal{H}.
  \end{equation*}
  By construction we already have $\phi_{\Zm} \circ \psi_{\hsmid} = \phi_{\hsmid}$. Note that for $z \in \Zp$ we have
  \begin{align*}
    z = \psi_{\hs} z = \psi_{\hsmid} z = \psi_{\hsdual} z
    \quad\text{and}\quad
    \phi_{\hs} z = \phi_{\hsmid} z = \phi_{\hsdual} z.
  \end{align*}

  Now for $f \in \hs$ there exists a sequence $(z_{n})_{n\in\N}$ in $\Zp$ that converges to $f$ w.r.t.\ $\norm{\cdot}_{\hs}$. Hence, the continuity of $\phi_{\Zm}$, $\psi_{\hs}$ and $\phi_{\hs}$ gives
  \begin{multline*}
    \phi_{\Zm} \circ \psi_{\hs}f = \lim_{n\to\infty} \phi_{\Zm} \circ \psi_{\hs} z_{n}
    = \lim_{n\to\infty} \phi_{\Zm} \circ \psi_{\hsmid} z_{n} \\
    = \lim_{n\to\infty} \phi_{\hsmid}  z_{n}
    = \lim_{n\to\infty} \phi_{\hs}  z_{n}
    =  \phi_{\hs} f.
  \end{multline*}
  Analogously, we can show $\phi_{\Zm} \circ \psi_{\hsdual} = \phi_{\hsdual}$.
\end{proof}

\begin{corollary}
  Let $\mathcal{H}$ be a topological Hausdorff vector space such that we can structure preservingly embed the quasi Gelfand triple $(\hs,\hsmid,\hsdual)$ into $\mathcal{H}$. Then $\hs \cap \hsdual$ in $\mathcal{H}$ equals $D_{+} \cap D_{-}$, i.e., $\phi_{\hs}(\hs) \cap \phi_{\hsdual}(\hsdual) = \phi_{\hsmid}(D_{+} \cap D_{-})$.
\end{corollary}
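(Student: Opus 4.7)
The plan is to reduce this immediately to \Cref{th:X-plus-cap-X-minus-in-X-0}, which already identifies $\psi_{\hs}(\hs) \cap \psi_{\hsdual}(\hsdual)$ with $\psi_{\hsmid}(D_{+}\cap D_{-})$ inside the canonical space $\Zm$, via the universal property proved in \Cref{th:Zm-is-an-initial-object}.

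First, I would invoke \Cref{th:Zm-is-an-initial-object} to obtain a continuous (and in particular injective) linear map $\phi_{\Zm}\colon \Zm \to \mathcal{H}$ with
\begin{equation*}
  \phi_{\hs} = \phi_{\Zm}\circ \psi_{\hs}, \qquad
  \phi_{\hsmid} = \phi_{\Zm}\circ \psi_{\hsmid}, \qquad
  \phi_{\hsdual} = \phi_{\Zm}\circ \psi_{\hsdual}.
\end{equation*}
Then, using that $\phi_{\Zm}$ is injective (so $\phi_{\Zm}(A)\cap \phi_{\Zm}(B) = \phi_{\Zm}(A\cap B)$ for any subsets $A,B\subseteq \Zm$), I would compute
\begin{align*}
  \phi_{\hs}(\hs) \cap \phi_{\hsdual}(\hsdual)
  &= \phi_{\Zm}\bigl(\psi_{\hs}(\hs)\bigr) \cap \phi_{\Zm}\bigl(\psi_{\hsdual}(\hsdual)\bigr) \\
  &= \phi_{\Zm}\bigl(\psi_{\hs}(\hs) \cap \psi_{\hsdual}(\hsdual)\bigr).
\end{align*}
By \Cref{th:X-plus-cap-X-minus-in-X-0} the inner intersection equals $\psi_{\hsmid}(\Zp) = \psi_{\hsmid}(D_{+}\cap D_{-})$, and applying the compatibility relation $\phi_{\Zm}\circ \psi_{\hsmid} = \phi_{\hsmid}$ finishes the identification with $\phi_{\hsmid}(D_{+}\cap D_{-})$.

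The only nontrivial point is the injectivity of $\phi_{\Zm}$; this is implicit in calling $\phi_{\Zm}$ a continuous embedding in the statement of \Cref{th:Zm-is-an-initial-object}, and can also be read off directly from its construction in that proof (the extension of $\phi_{\hsmid}\circ\psi_{\hsmid}^{-1}$ factors through the injection $\quspace{\Lambda}{\ker \Lambda}$). Aside from this observation, the argument is a purely formal set-theoretic consequence of the universal property and therefore carries no real obstacle.
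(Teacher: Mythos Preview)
Your argument is exactly the paper's: invoke \Cref{th:Zm-is-an-initial-object} to factor the embeddings through $\Zm$, then apply \Cref{th:X-plus-cap-X-minus-in-X-0}.

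One caution about your parenthetical justification of the injectivity of $\phi_{\Zm}$. Factoring $\phi_{\Zm}$ as $\Zm \xrightarrow{\alpha} (\hs\times\hsdual)/\ker\Lambda \xrightarrow{\Lambda/\ker\Lambda} \mathcal{H}$ with the second arrow injective does \emph{not} by itself make $\phi_{\Zm}$ injective; you still need $\alpha$ injective, i.e.\ $\ker\Lambda \subseteq \{(z,-z):z\in\Zp\}$, and that is precisely the non-trivial inclusion $\phi_{\hs}(\hs)\cap\phi_{\hsdual}(\hsdual)\subseteq \phi_{\hsmid}(\Zp)$ you are proving. So this alternative verification is circular. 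Citing the theorem's explicit assertion that $\phi_{\Zm}$ is an embedding is the legitimate route here (and is what the paper does as well).
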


\begin{proof}
  By \Cref{th:Zm-is-an-initial-object} we can also embed $\Zm$ into $\mathcal{H}$ such that
  \begin{align*}
    \begin{array}{l}
      \hs \\ \hsdual
    \end{array}
    \subseteq \Zm \subseteq \mathcal{H}.
  \end{align*}
  Hence, $\hs \cap \hsdual$ in $\mathcal{H}$ is the same as $\hs \cap \hsdual$ in $\Zm$, which equals, by \Cref{th:X-plus-cap-X-minus-in-X-0}, $D_{+} \cap D_{-} = \Zp$.
\end{proof}

\section{Gram operators}%
\label{sec:gram-operator}

Every quasi Gelfand triple $(\hs,\hsmid,\hsdual)$ is fully determined (up to isomorphic identifications) by $\hsmid$, $\ran \iota_{+}$ and $\norm{\cdot}_{\hs}$ on $\ran \iota_{+}$ (or $\ran \iota_{-}$ with $\norm{\cdot}_{\hsdual}$).
However, in the Hilbert space case ($\hs$ is a Hilbert space) we can even encode the entire information of a quasi Gelfand triple in a single (so called \emph{Gram}) operator $G$ on $\hsmid$, that is self-adjoint, positive and injective. This means that $\scprod{Gf}{g}_{\hsmid}$ defines a new inner product on $\hsmid$, which gives rise to $\scprod{f}{g}_{\hs}$. In particular, we will see that $D_{+} = \dom G^{\nicefrac{1}{2}}$ and $\scprod{G^{\nicefrac{1}{2}}f}{G^{\nicefrac{1}{2}}g} = \scprod{f}{g}_{\hs}$.

\begin{definition}
  Let $(\hs,\hsmid,\hsdual)$ be a quasi Gelfand triple of Hilbert spaces.
  Then we define the \emph{Gram operator} $G_{+} \colon \dom G_{+} \subseteq \hsmid \to \hsmid$ of the quasi Gelfand triple by
  \begin{equation*}
    G_{+} \coloneqq (\iota_{+}^{-1})\hadjun \iota_{+}^{-1} = (\iota_{+}\iota_{+}\hadjun)^{-1},
  \end{equation*}
  where here the adjoint is taken w.r.t.\ the dual pairs $(\hsmid,\hsmid)$ and $(\hs,\hs)$, i.e., $(\iota_{+}^{-1})\hadjun = (\iota_{+}^{-1})\adjunX{\hs\times\hsmid}$ and $\iota_{+}\hadjun = \iota_{+}\adjunX{\hsmid\times\hs}$.
\end{definition}

By \Cref{th:TTadjun-self-adjoint} $G_{+}$ is self-adjoint and positive (not necessarily strictly positive (coercive)). Moreover, by the functional calculus for unbounded self-adjoint operators on Hilbert spaces there exists a root $G_{+}^{\nicefrac{1}{2}}$ of $G_{+}$, which is also self-adjoint and positive.

Clearly, we can do the same for $\iota_{-}$ and define $G_{-} \coloneqq (\iota_{-}^{-1})\hadjun \iota_{-}^{-1}$, where again here the adjoint is taken w.r.t.\ the dual pairs $(\hsmid,\hsmid)$ and $(\hsdual,\hsdual)$, i.e., $(\iota_{-}^{-1})\hadjun = (\iota_{-}^{-1})\adjunX{\hsdual\times\hsmid}$.
In fact we will see that $G_{-} = G_{+}^{-1}$.

\begin{theorem}\label{th:Gplus-defines-hs-inner-product}
  Let $(\hs,\hsmid,\hsdual)$ be a quasi Gelfand triple of Hilbert spaces and $G_{+}$ its Gram operator. Then $\ran \iota_{+} = \dom G_{+}^{\nicefrac{1}{2}}$ and
  \begin{equation*}
    \scprod{f}{g}_{\hs} = \scprod{G_{+}^{\nicefrac{1}{2}} f}{G_{+}^{\nicefrac{1}{2}}g}_{\hsmid}
    \quad\text{for all}\quad f,g \in \dom G_{+}^{\nicefrac{1}{2}}.
  \end{equation*}
  In particular, $\norm{f}_{\hs} = \norm{G_{+}^{\nicefrac{1}{2}}f}_{\hsmid}$.
\end{theorem}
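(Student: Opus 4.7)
My plan is to reduce the statement to von Neumann's classical theorem that for any closed, densely defined operator $T$ between Hilbert spaces, the operator $T\hadjun T$ is self-adjoint and non-negative, with $\dom (T\hadjun T)^{1/2} = \dom T$ and $\norm{(T\hadjun T)^{1/2}x} = \norm{Tx}$ on $\dom T$ (this is the result \Cref{th:TTadjun-self-adjoint} already invoked in the paper).

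The key observation is to apply this theorem not to $\iota_{+}$ itself, but to its inverse $T := \iota_{+}^{-1}\colon \ran \iota_{+}\subseteq \hsmid \to \hs$. First I would verify that $T$ fits the hypotheses: $\dom T = \ran \iota_{+} = D_{+}$ is dense in $\hsmid$ (this is part of the quasi Gelfand triple setup), $T$ is injective since $\iota_{+}$ is a well-defined map, and $T$ is closed because $\iota_{+}$ is closed (the closedness of an injective closed operator transfers to its inverse, as the graph of $T$ is just the graph of $\iota_{+}$ with the two components swapped). Therefore $T\hadjun T = (\iota_{+}^{-1})\hadjun \iota_{+}^{-1} = G_{+}$ is exactly the Gram operator by definition.

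Applying von Neumann's theorem to $T = \iota_{+}^{-1}$ then yields directly
\begin{equation*}
  \dom G_{+}^{\nicefrac{1}{2}} = \dom \iota_{+}^{-1} = \ran \iota_{+}
\end{equation*}
and
\begin{equation*}
  \norm{G_{+}^{\nicefrac{1}{2}} f}_{\hsmid} = \norm{\iota_{+}^{-1} f}_{\hs} \quad \text{for all } f \in \ran \iota_{+}.
\end{equation*}
By the identification of $D_{+} = \ran \iota_{+}$ with $\dom \iota_{+}$ via $\iota_{+}$, the $\hs$-norm on $D_{+}$ is by definition $\norm{f}_{\hs} = \norm{\iota_{+}^{-1} f}_{\hs}$, so the norm identity $\norm{f}_{\hs} = \norm{G_{+}^{\nicefrac{1}{2}} f}_{\hsmid}$ follows immediately. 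The inner product identity then follows by polarization, since both $\scprod{\cdot}{\cdot}_{\hs}$ and $\scprod{G_{+}^{\nicefrac{1}{2}}\cdot}{G_{+}^{\nicefrac{1}{2}}\cdot}_{\hsmid}$ are sesquilinear forms on $\dom G_{+}^{\nicefrac{1}{2}}$ inducing the same norm.

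I expect the only subtlety to be bookkeeping the fact that $(\iota_{+}^{-1})^{\hadjun} = (\iota_{+}\hadjun)^{-1}$ so that $G_{+} = (\iota_{+}^{-1})\hadjun \iota_{+}^{-1}$ coincides with the form $(\iota_{+}\iota_{+}\hadjun)^{-1}$ given in the definition; this follows because $\iota_{+}$ is closed, densely defined and injective with dense range, hence so is $\iota_{+}\hadjun$, and for such operators adjoint and inverse commute. Beyond this identification there is no real obstacle, as the substance is carried entirely by the von Neumann theorem.
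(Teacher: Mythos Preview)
Your overall strategy is correct and is, in spirit, the standard route: apply the classical theory of $T\hadjun T$ to $T=\iota_{+}^{-1}$ and read off the domain and norm identities. However, you overstate what the paper's \Cref{th:TTadjun-self-adjoint} actually provides. That theorem (as formulated in the paper) only asserts that $T\hadjun T$ is self-adjoint and that $\opid+T\hadjun T$ is boundedly invertible; it does \emph{not} include the square-root domain identity $\dom (T\hadjun T)^{1/2}=\dom T$ or the norm equality $\norm{(T\hadjun T)^{1/2}x}=\norm{Tx}$. That identity is precisely the substantive content of the theorem being proved, so citing \Cref{th:TTadjun-self-adjoint} for it is circular as written.

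The result you want is, of course, classical: it is the polar decomposition $T=U\abs{T}$ with $\abs{T}=(T\hadjun T)^{1/2}$ and $\dom\abs{T}=\dom T$, or equivalently Kato's second representation theorem for the closed form $\mathfrak{t}(f,g)=\scprod{Tf}{Tg}$. If you replace your appeal to \Cref{th:TTadjun-self-adjoint} by a citation of the polar decomposition (or prove that $\dom G_{+}^{1/2}=\dom\iota_{+}^{-1}$ directly), your argument is complete and considerably shorter than the paper's. The paper instead proves the two inclusions $\ran\iota_{+}\subseteq\dom G_{+}^{1/2}$ and $\dom G_{+}^{1/2}\subseteq\ran\iota_{+}$ by hand: starting from the fact that $\dom G_{+}$ is a core of both $\iota_{+}^{-1}$ and $G_{+}^{1/2}$, it passes bounded sequences to weakly convergent subsequences and then to strongly convergent Ces\`aro means (via \Cref{th:weak-to-strong-convergent}) to extend the norm equality from $\dom G_{+}$ to each larger domain. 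Your route avoids this weak-compactness machinery entirely, at the cost of importing the polar-decomposition domain identity as a black box.
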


\begin{proof}
  Note that $\dom G_{+} = \dom (\iota_{+}^{-1})\hadjun \iota_{+}^{-1}$ is a core of $\iota_{+}^{-1}$. This implies that for every $f \in \ran \iota_{+}$ there exists a sequence $(f_{n})_{n\in\N}$ in $\dom G_{+}$ such that $f_{n} \to f$ w.r.t.\ $\norm{\cdot}_{\hsmid}$ and $\iota_{+}^{-1} f_{n} \to \iota_{+}^{-1} f$ w.r.t.\ $\norm{\cdot}_{\hs}$. In order words $\dom G_{+}$ is dense in $D_{+}$ w.r.t.\ $\norm{\cdot}_{\hs\cap\hsmid}$.
  For $f,g \in \dom G_{+} \subseteq \dom G_{+}^{\nicefrac{1}{2}}$ we have
  \begin{align}\label{eq:inner-product-equalities-for-dom-G}
    \scprod{\iota_{+}^{-1} f}{\iota_{+}^{-1}g}_{\hs} = \scprod*{(\iota_{+}^{-1})\hadjun \iota_{+}^{-1} f}{g}_{\hs} =
    \scprod{G_{+} f}{g}_{\hsmid} = \scprod{G_{+}^{\nicefrac{1}{2}}f}{G_{+}^{\nicefrac{1}{2}}g}_{\hsmid}
  \end{align}
  and in particular we have $\norm{\iota_{+}^{-1}f}_{\hs} = \norm{G_{+}^{\nicefrac{1}{2}}f}_{\hsmid}$ for all $f \in \dom G_{+}$.

  For every $f \in \ran \iota_{+}$  there exists a sequence $(f_{n})_{n\in\N}$ in $\dom G_{+}$ that converges to $f$ w.r.t.\ $\norm{\cdot}_{\hs\cap\hsmid}$.
  Hence, we have
  \begin{equation*}
    \norm{G_{+}^{\nicefrac{1}{2}} f_{n}}_{\hsmid} = \norm{\iota_{+}^{-1} f_{n}}_{\hs} \to \norm{\iota_{+}^{-1} f}_{\hs}
  \end{equation*}
  and in particular $(G_{+}^{\nicefrac{1}{2}}f_{n})_{n\in\N}$ is a bounded sequence in $\hsmid$. Therefore, there exists a weakly convergent subsequence and by taking a convex combination \Cref{th:weak-to-strong-convergent} we end up with a sequence $(\tilde{f}_{n})_{n\in\N}$ that still converges to $f$ w.r.t.\ $\norm{\cdot}_{\hs\cap\hsmid}$ and additionally $(G_{+}^{\nicefrac{1}{2}}\tilde{f}_{n})_{n\in\N}$ converges to some $\tilde{f} \in \hsmid$. By the closedness of $G_{+}^{\nicefrac{1}{2}}$ the limit $\tilde{f}$ has to coincide with $f$. This implies $\ran \iota_{+} \subseteq \dom G_{+}$ and we can extend~\eqref{eq:inner-product-equalities-for-dom-G} by continuity to
  \begin{equation*}
    \scprod{\iota_{+}^{-1} f}{\iota_{+}^{-1}g}_{\hs} = \scprod[\big]{G_{+}^{\nicefrac{1}{2}}f}{G_{+}^{\nicefrac{1}{2}}g}_{\hsmid}
    \quad\text{for all}\quad f,g \in \ran \iota_{+}.
  \end{equation*}

  Note that $\dom G_{+} = \dom (G_{+}^{\nicefrac{1}{2}})\adjun G_{+}^{\nicefrac{1}{2}}$ is a core of $G_{+}^{\nicefrac{1}{2}}$. Now we will the repeat the previous step with switched roles of $G_{+}^{\nicefrac{1}{2}}$ and $\iota_{+}^{-1}$: For every $f \in \dom G_{+}^{\nicefrac{1}{2}}$ there exists a sequence $(f_{n})_{n\in\N}$ in $\dom G_{+}$ such that $f_{n} \to f$ and $G_{+}^{\nicefrac{1}{2}}f_{n} \to G_{+}^{\nicefrac{1}{2}}f$ both w.r.t.\ $\norm{\cdot}_{\hs}$. This gives
  \begin{align*}
    \norm{\iota_{+}^{-1} f_{n}}_{\hs} = \norm{G_{+}^{\nicefrac{1}{2}}f_{n}}_{\hsmid} \to \norm{G_{+}^{\nicefrac{1}{2}}f}_{\hsmid}.
  \end{align*}
  Now $(\iota_{+}^{-1}f_{n})_{n\in\N}$ is a bounded sequence in $\hs$. Therefore there exists a weakly convergent subsequence. Moreover a convex combination of this subsequence converges even w.r.t\ $\norm{\cdot}_{\hs}$.
  In total we have a sequence $(\tilde{f}_{n})_{n\in\N}$ such that $\tilde{f}_{n} \to f$, $G_{+}^{\nicefrac{1}{2}}\tilde{f}_{n} \to G_{+}^{\nicefrac{1}{2}}f$ w.r.t.\ $\norm{\cdot}_{\hsmid}$ and $\iota_{+}^{-1}\tilde{f}_{n} \to \tilde{f}$ w.r.t.\ $\norm{\cdot}_{\hs}$ for an $\tilde{f} \in \hs$.
  By the closedness of $\iota_{+}$ we conclude $\tilde{f} = \iota_{+}^{-1}f$ and in turn $\dom G_{+}^{\nicefrac{1}{2}} \subseteq \ran \iota_{+}$, which completes the proof.
\end{proof}

\begin{proposition}\label{th:Gminus-equals-Gplus-inverse}
  Let $(\hs,\hsmid,\hsdual)$ be a quasi Gelfand triple of Hilbert spaces. Then
  \begin{equation*}
    G_{-} = G_{+}^{-1}.
  \end{equation*}
\end{proposition}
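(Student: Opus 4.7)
The plan is to combine \Cref{le:iota-plus-adjun=iota-minus-inverse} with the relations $\iota_{+}\hadjun = \Psi \iota_{+}\adjun$ and $\iota_{-}\hadjun = \Psi^{-1}\iota_{-}\adjun$ stated earlier in \Cref{sec:quasi-Gelfand-triples-with-Hilbert-spaces}, where $\Psi\colon\hsdual\to\hs$ is the duality map. Substituting $\iota_{+}\adjun = \iota_{-}^{-1}$ and $\iota_{-}\adjun = \iota_{+}^{-1}$ yields the clean identities
\begin{equation*}
  \iota_{+}\hadjun = \Psi \iota_{-}^{-1}
  \quad\text{and}\quad
  \iota_{-}\hadjun = \Psi^{-1} \iota_{+}^{-1}.
\end{equation*}

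The next step is to plug these into the definitions of the Gram operators. Using $G_{+} = (\iota_{+}\iota_{+}\hadjun)^{-1}$ and $G_{-} = (\iota_{-}\iota_{-}\hadjun)^{-1}$ one obtains formally
\begin{equation*}
  G_{+} = (\iota_{+}\Psi\iota_{-}^{-1})^{-1} = \iota_{-}\Psi^{-1}\iota_{+}^{-1},
  \qquad
  G_{-} = (\iota_{-}\Psi^{-1}\iota_{+}^{-1})^{-1} = \iota_{+}\Psi\iota_{-}^{-1}.
\end{equation*}
A straightforward composition then gives $G_{+} G_{-} \subseteq \id$ and $G_{-} G_{+} \subseteq \id$, since the $\Psi$ and $\Psi^{-1}$ cancel and so do the pairs $\iota_{\pm}\iota_{\pm}^{-1}$.

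The main obstacle is the bookkeeping of domains: each of the four factors is unbounded, so the inversion of a product and the cancellation $\Psi^{-1}\Psi = \id$ must be justified rather than just written down. I would proceed by first verifying, purely on the level of linear relations, that for a closed densely defined injective operator $T$ with dense range one has $(T\hadjun)^{-1} = (T^{-1})\hadjun$, so that $(\iota_{+}\iota_{+}\hadjun)^{-1} = (\iota_{+}\hadjun)^{-1}\iota_{+}^{-1}$ and similarly for $\iota_{-}$. Then I would use that $\Psi$ is a (bounded, everywhere defined) unitary between $\hsdual$ and $\hs$, so the inversion $(\iota_{+}\Psi\iota_{-}^{-1})^{-1} = \iota_{-}\Psi^{-1}\iota_{+}^{-1}$ holds as an equality of linear relations.

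Finally, having established the inclusion $G_{-} \subseteq G_{+}^{-1}$ (or the opposite), equality is automatic: by \Cref{th:TTadjun-self-adjoint} both $G_{+}$ and $G_{-}$ are self-adjoint and positive, hence injective and maximal, and the inverse of a self-adjoint injective operator is again self-adjoint; so the inclusion forces equality. This, together with the symmetric roles of $\hs$ and $\hsdual$ in the quasi Gelfand triple setting (see \Cref{le:iota-adjun-dom}), concludes the proof.
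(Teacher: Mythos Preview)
Your proposal is correct and follows essentially the same route as the paper: combine \Cref{le:iota-plus-adjun=iota-minus-inverse} with $\iota_{\pm}\hadjun = \Psi^{\pm 1}\iota_{\pm}\adjun$ to obtain $\iota_{+}\hadjun = \Psi\iota_{-}^{-1}$ and $\iota_{-}\hadjun = \Psi^{-1}\iota_{+}^{-1}$, and then unwind the definitions of $G_{\pm}$. The paper's version is slightly more streamlined: it writes the single chain
\[
  G_{-}^{-1} = \iota_{-}\iota_{-}\hadjun = \iota_{-}\Psi^{-1}\iota_{+}^{-1} = (\Psi\iota_{-}^{-1})^{-1}\iota_{+}^{-1} = (\iota_{+}\hadjun)^{-1}\iota_{+}^{-1} = (\iota_{+}^{-1})\hadjun\iota_{+}^{-1} = G_{+},
\]
each step being a genuine equality of linear relations (the crucial identity $(\iota_{+}\hadjun)^{-1} = (\iota_{+}^{-1})\hadjun$ is exactly the one you isolate), so no self-adjointness safety net is needed at the end.
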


\begin{proof}
  Let $\Psi\colon \hsdual \to \hs$ denote the duality mapping between $\hsdual$ and $\hs$.
  Recall $G_{-} = (\iota_{-}^{-1})\hadjun \iota_{-}^{-1} = (\iota_{-} \iota_{-}\hadjun)^{-1}$,
  \begin{equation*}
    \iota_{+}\hadjun = \Psi\iota_{+}\adjun = \Psi \iota_{-}^{-1}
    \quad\text{and}\quad
    \iota_{-}\hadjun = \Psi^{-1} \iota_{-}\adjun = \Psi^{-1} \iota_{+}^{-1}.
  \end{equation*}
  Hence, we have
  \begin{equation*}
    G_{-}^{-1} = \iota_{-} \iota_{-}\hadjun = \iota_{-} \Psi^{-1} \iota_{+}^{-1} = (\Psi \iota_{-}^{-1})^{-1}\iota_{+}^{-1} = (\iota_{+}\hadjun)^{-1} \iota_{+}^{-1} = (\iota_{+}^{-1})\hadjun \iota_{+}^{-1} = G_{+}.
    \ifSn{\tag*{\qedhere}}{\qedhere}
  \end{equation*}
\end{proof}

\begin{corollary}
  Let $(\hs,\hsmid,\hsdual)$ be a quasi Gelfand triple of Hilbert spaces. Then
  \begin{equation*}
    \ran \iota_{-}  = \dom G_{-}^{\nicefrac{1}{2}} = \dom G_{+}^{-\nicefrac{1}{2}} = \ran G_{+}^{\nicefrac{1}{2}}.
  \end{equation*}
\end{corollary}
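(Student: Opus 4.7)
The plan is to chain three equalities, each of which follows from a result already established in the paper or from standard functional calculus.

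First, I would apply \Cref{th:Gplus-defines-hs-inner-product} to $\iota_{-}$ instead of $\iota_{+}$. The statement and proof of that theorem are symmetric in the roles of the two embeddings (both $\iota_{+}$ and $\iota_{-}$ are densely defined, closed, injective operators between Hilbert spaces with dense range, and the Gram operator is defined analogously for both). Hence the theorem yields $\ran \iota_{-} = \dom G_{-}^{\nicefrac{1}{2}}$, giving the first equality.

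Second, by \Cref{th:Gminus-equals-Gplus-inverse} we have $G_{-} = G_{+}^{-1}$. Since $G_{+}$ is a positive self-adjoint operator on $\hsmid$ (and, being the inverse of $\iota_{+}\iota_{+}\hadjun$, it is injective by \Cref{th:TTadjun-self-adjoint}, so it has dense range), the Borel functional calculus applies: the square root of $G_{+}^{-1}$ is $G_{+}^{-\nicefrac{1}{2}}$, where $G_{+}^{-\nicefrac{1}{2}}$ is interpreted as $(G_{+}^{\nicefrac{1}{2}})^{-1}$. More precisely, the functions $\lambda \mapsto \lambda^{-\nicefrac{1}{2}}$ and $\lambda \mapsto (\lambda^{-1})^{\nicefrac{1}{2}}$ coincide on $(0,\infty)$, and both $G_{+}$ and $G_{-} = G_{+}^{-1}$ have trivial kernel, so $G_{-}^{\nicefrac{1}{2}} = G_{+}^{-\nicefrac{1}{2}}$ as operators, and in particular $\dom G_{-}^{\nicefrac{1}{2}} = \dom G_{+}^{-\nicefrac{1}{2}}$.

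Third, by the general identity $(T^{-1}) = (T)^{-1}$ for injective self-adjoint operators, the domain of $G_{+}^{-\nicefrac{1}{2}} = (G_{+}^{\nicefrac{1}{2}})^{-1}$ is precisely the range of $G_{+}^{\nicefrac{1}{2}}$, giving the final equality $\dom G_{+}^{-\nicefrac{1}{2}} = \ran G_{+}^{\nicefrac{1}{2}}$. I do not anticipate any substantial obstacle here; the only subtle point is justifying the functional calculus identity $G_{-}^{\nicefrac{1}{2}} = G_{+}^{-\nicefrac{1}{2}}$, which I would handle by noting that both operators are the unique positive self-adjoint square root of $G_{+}^{-1}$.
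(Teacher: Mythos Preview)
Your proposal is correct and matches the paper's reasoning. The paper does not write out a separate proof for this corollary, but the argument you give---apply \Cref{th:Gplus-defines-hs-inner-product} with $\iota_{-}$ in place of $\iota_{+}$, then invoke \Cref{th:Gminus-equals-Gplus-inverse} and the functional calculus identity $G_{-}^{\nicefrac{1}{2}} = G_{+}^{-\nicefrac{1}{2}} = (G_{+}^{\nicefrac{1}{2}})^{-1}$---is exactly the chain the paper spells out at the end of the proof of \Cref{th:Gram-operator-gives-quasi-Gelfand-triple}.
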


So far we have shown that there is a self-adjoint positive and injective operator with dense range for every quasi Gelfand triple.
Now the next theorem will show that also the reverse is true. That is, every self-adjoint positive and injective operator $G$ with dense range establishes a quasi Gelfand triple whose Gram operator is $G$.

\begin{theorem}\label{th:Gram-operator-gives-quasi-Gelfand-triple}
  Let $\hsmid$ be a Hilbert space and $G$ a self-adjoint positive and injective operator on $\hsmid$ with dense range.
  Then there exists a quasi Gelfand triple whose Gram operator is $G$. In particular, if we denote the corresponding quasi Gelfand triple by $(\hs,\hsmid,\hsdual)$ we have
  \begin{align*}
    \ran \iota_{+} = \dom G^{\nicefrac{1}{2}} \quad\text{and}\quad \ran \iota_{-} = \ran G^{\nicefrac{1}{2}}.
  \end{align*}
  Moreover, $G$ coincides with the Gram operator $G_{+}$ of $(\hs,\hsmid\,\hsdual)$, i.e., $G = G_{+}$.
\end{theorem}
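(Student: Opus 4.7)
The plan is to construct $\hs$ via the functional calculus of $G$ and then invoke \Cref{th:quasi-gelfand-triple-charaterization} to produce the quasi Gelfand triple automatically.

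First, the spectral theorem furnishes $G^{\nicefrac{1}{2}}$: a self-adjoint, positive, injective operator on $\hsmid$ with dense domain $\dom G^{\nicefrac{1}{2}}$ and dense range $\ran G^{\nicefrac{1}{2}}$ (the latter because $\ran G^{\nicefrac{1}{2}}$ is dense iff $\ker G^{\nicefrac{1}{2}} = \ker G = \set{0}$). On $\dom G^{\nicefrac{1}{2}}$ I define the inner product $\scprod{f}{g}_{G} \coloneqq \scprod{G^{\nicefrac{1}{2}}f}{G^{\nicefrac{1}{2}}g}_{\hsmid}$, which is positive definite because $G^{\nicefrac{1}{2}}$ is injective. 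Let $\hs$ denote the abstract Hilbert-space completion of $(\dom G^{\nicefrac{1}{2}}, \scprod{\cdot}{\cdot}_{G})$, and let $\iota_{+}$ be the operator from $\dom \iota_{+} \subseteq \hs$ to $\hsmid$ induced by the identity on $\dom G^{\nicefrac{1}{2}}$, where $\dom \iota_{+}$ is the canonical image of $\dom G^{\nicefrac{1}{2}}$ inside $\hs$.

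Next I would verify the hypotheses of \Cref{th:quasi-gelfand-triple-charaterization}: $\iota_{+}$ is densely defined by construction; it has dense range because $\ran \iota_{+} = \dom G^{\nicefrac{1}{2}}$ is dense in $\hsmid$; and it is injective because an element of $\dom \iota_{+}$ mapped to $0$ comes from some $f \in \dom G^{\nicefrac{1}{2}}$ with $\norm{f}_{G} = 0$, hence $f = 0$. The delicate point is closedness. I take a sequence $(f_n)$ in $\dom G^{\nicefrac{1}{2}}$ that is Cauchy in $\scprod{\cdot}{\cdot}_{G}$ with $\iota_{+} f_n \to h$ in $\hsmid$; then $(G^{\nicefrac{1}{2}}f_n)$ is Cauchy in $\hsmid$ and converges to some $k$, and since $G^{\nicefrac{1}{2}}$ is closed this forces $h \in \dom G^{\nicefrac{1}{2}}$ with $G^{\nicefrac{1}{2}}h = k$. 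The computation $\norm{f_n - h}_{G} = \norm{G^{\nicefrac{1}{2}}f_n - k}_{\hsmid} \to 0$ shows that the $\hs$-limit of $(f_n)$ is the image of $h$ in $\hs$, which establishes closedness.

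\Cref{th:quasi-gelfand-triple-charaterization} now yields a Banach space $\hsdual$ (in fact a Hilbert space, since it arises as a closed subspace of the dual of a Hilbert space) and an embedding $\iota_{-}$ making $(\hs,\hsmid,\hsdual)$ a quasi Gelfand triple with associated Gram operator $G_{+}$. By \Cref{th:Gplus-defines-hs-inner-product} we have $\dom G_{+}^{\nicefrac{1}{2}} = \ran \iota_{+} = \dom G^{\nicefrac{1}{2}}$ together with $\norm{G_{+}^{\nicefrac{1}{2}}f}_{\hsmid} = \norm{f}_{\hs} = \norm{G^{\nicefrac{1}{2}}f}_{\hsmid}$ on this common domain. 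Since a positive self-adjoint operator is uniquely determined by its closed quadratic form, I conclude $G_{+}^{\nicefrac{1}{2}} = G^{\nicefrac{1}{2}}$ and hence $G_{+} = G$. The identity $\ran \iota_{-} = \ran G^{\nicefrac{1}{2}}$ then drops out of the corollary to \Cref{th:Gminus-equals-Gplus-inverse}, which gives $\ran \iota_{-} = \ran G_{+}^{\nicefrac{1}{2}}$.

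The main obstacle is the closedness of $\iota_{+}$: it requires careful bookkeeping between the abstract completion $\hs$ and the concrete space $\hsmid$, and it relies essentially on the closedness of $G^{\nicefrac{1}{2}}$ to pull the $\hsmid$-limit back into $\dom G^{\nicefrac{1}{2}}$. Everything else is standard functional calculus and uniqueness of closed quadratic forms.
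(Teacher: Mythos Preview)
Your proposal is correct and follows essentially the same route as the paper: construct $\hs$ as the completion of $\dom G^{\nicefrac{1}{2}}$ under $\scprod{G^{\nicefrac{1}{2}}\cdot}{G^{\nicefrac{1}{2}}\cdot}_{\hsmid}$, verify closedness of $\iota_{+}$ via the closedness of $G^{\nicefrac{1}{2}}$, and invoke \Cref{th:quasi-gelfand-triple-charaterization}. The only difference is that, to conclude $G=G_{+}$, the paper shows the mutual inclusions $G\subseteq G_{+}$ and $G_{+}\subseteq G$ directly from the identity of the sesquilinear forms, whereas you appeal to the uniqueness of the positive self-adjoint operator associated with a closed quadratic form; both arguments are standard and equivalent here.
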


Note that dense range and injectivity are for a self-adjoint operator equivalent. Moreover, the density of the range (or the injectivity of the operator) is not really a necessity as we can always split
\begin{align*}
  \hsmid = \ker G \oplus \cl{\ran G}.
\end{align*}
Hence, we just replace $\hsmid$ with $\cl{\ran G}$ and $G$ with $G\big\vert_{\cl{\ran G}}$.

\begin{proof}
  We define $\scprod{f}{g}_{\hs} \coloneqq \scprod{G^{\nicefrac{1}{2}} f}{G^{\nicefrac{1}{2}} g}_{\hsmid}$ and the corresponding norm $\norm{f}_{\hs} = \norm{G^{\nicefrac{1}{2}}f}_{\hsmid}$ for $f,g \in \dom G^{\nicefrac{1}{2}}$. Since $G^{\nicefrac{1}{2}}$ is positive $\scprod{\cdot}{\cdot}_{\hs}$ is really an inner product and $\norm{\cdot}_{\hs}$ a norm. Hence, $\dom G^{\nicefrac{1}{2}}$ with $\scprod{\cdot}{\cdot}_{\hs}$ is a pre-Hilbert space and its completion $\hs$ is a Hilbert space. We define
  \begin{equation*}
    \iota_{+}\colon
    \mapping{\dom G^{\nicefrac{1}{2}}\subseteq \hs}{\hsmid}{f}{f.}
  \end{equation*}
  Let
  \(\left(
  \begin{bsmallmatrix}
    f_{n} \\ f_{n}
  \end{bsmallmatrix}
  \right)_{n\in\N}
  \)
  be a sequence in $\iota_{+}$ that converges to
  \(
  \begin{bsmallmatrix}
    g \\ f
  \end{bsmallmatrix}
  \in \hs \times \hsmid
  \).
  Then
  \(
  \left(
  \begin{bsmallmatrix}
    f_{n} \\ G^{\nicefrac{1}{2}} f_{n}
  \end{bsmallmatrix}
  \right)_{n\in\N}
  \)
  is a Cauchy sequence in $\hsmid \times \hsmid$, and therefore convergent. The closedness of $G^{\nicefrac{1}{2}}$ implies $f \in \dom G^{\nicefrac{1}{2}} = D_{+}$ and
  \(
  \begin{bsmallmatrix}
    f_{n} \\ G^{\nicefrac{1}{2}} f_{n}
  \end{bsmallmatrix}
  \to
  \begin{bsmallmatrix}
    f \\ G^{\nicefrac{1}{2}} f
  \end{bsmallmatrix}
  \).
  This leads to $\norm{f_{n} - f}_{\hs} = \norm{G^{\nicefrac{1}{2}}(f_{n}- f)}_{\hsmid} \to 0$ and consequently $f = g$.
  Now we can apply \Cref{th:quasi-gelfand-triple-charaterization} and see that there is a space $\hsdual$ such that $(\hs,\hsmid,\hsdual)$ forms a quasi Gelfand triple.


  Now we have for $f,g \in \dom G^{\nicefrac{1}{2}} = \ran \iota_{+} = \dom G_{+}^{\nicefrac{1}{2}}$
  \begin{align*}
    \scprod{G^{\nicefrac{1}{2}} f}{G^{\nicefrac{1}{2}} g}_{\hsmid}
    = \scprod{f}{g}_{\hs}
    = \scprod{G_{+}^{\nicefrac{1}{2}} f}{G_{+}^{\nicefrac{1}{2}} g}_{\hsmid}.
  \end{align*}
  Note that $\dom G \subseteq \dom G^{\nicefrac{1}{2}}$ and therefore for $f \in \dom G$ we have
  \begin{align*}
    \scprod{Gf}{g}_{\hsmid} = \scprod{G_{+}^{\nicefrac{1}{2}} f}{G_{+}^{\nicefrac{1}{2}}g}_{\hsmid},
  \end{align*}
  which implies $G_{+}^{\nicefrac{1}{2}} f \in \dom G_{+}^{\nicefrac{1}{2}}$ and $G_{+}^{\nicefrac{1}{2}} G_{+}^{\nicefrac{1}{2}} f = G f$. Hence $G \subseteq G_{+}$. The same argument with $G$ and $G_{+}$ switched gives $G_{+} \subseteq G$ and thus $G = G_{+}$.

  By \Cref{th:Gminus-equals-Gplus-inverse} we have $G_{-} = G_{+}^{-1} = G^{-1}$ and therefore, by \Cref{th:Gplus-defines-hs-inner-product} for $G_{-}$,
  \begin{equation*}
    \ran \iota_{-} = \dom G_{-}^{\nicefrac{1}{2}} = \ran G_{-}^{-\nicefrac{1}{2}} = \ran G^{\nicefrac{1}{2}}.
    \ifSn{\tag*{\qedhere}}{\qedhere}
  \end{equation*}
\end{proof}

There is a bijection between the set of quasi Gelfand triples with pivot space $\hsmid$ and all self-adjoint positive and injective operators with dense range on $\hsmid$, see \Cref{fig:bijection-quasi-Gelfand-triples-Gram-operators}.

\begin{figure}[ht]
  \centering
  \begin{tikzcd}[column sep=2em,row sep=2em]
    (\hs,\hsmid,\hsdual) \ar[out=80,in=100]{rrr}{(\iota_{+}\iota_{+}\adjun)^{-1}}& & & G\ar[outer sep=-3pt,end anchor=east,out=-100,in=0]{ld}{\begin{array}{ll}\dom G^{\nicefrac{1}{2}}, \\ \scprod{G^{\nicefrac{1}{2}}\cdot}{G^{\nicefrac{1}{2}}\cdot}_{\hsmid}\end{array}} \\
    & \hs,\iota_{+}\ar[start anchor=west,out=180, in=-80]{lu}{\text{\Cref{th:quasi-gelfand-triple-charaterization}}} & D_{+},\scprod{\cdot}{\cdot}_{\hs}\ar[outer sep=3pt]{l}{\text{completion}} &
  \end{tikzcd}
  \caption{\label{fig:bijection-quasi-Gelfand-triples-Gram-operators}Illustration of \Cref{th:Gram-operator-gives-quasi-Gelfand-triple}}
\end{figure}

Since all infinite dimensional separable Hilbert spaces are isomorphic, it is clear that there exists a dual pairing $\dualprod{\cdot}{\cdot}_{\hs,\hsmid}$ such that also $(\hs,\hsmid)$ is a complete dual pair. However, we can explicitly write this mapping by
\begin{equation*}
  \dualprod{f}{g}_{\hs,\hsmid} = \scprod[\Big]{\cl{G_{+}^{\nicefrac{1}{2}}\iota_{+}}f}{g}_{\hsmid} = \scprod[\Big]{f}{\cl{G_{+}^{\nicefrac{1}{2}}\iota_{+}}^{-1} g}_{\hs},
\end{equation*}
where $\cl{G_{+}^{\nicefrac{1}{2}}\iota_{+}}$ is the continuous extension of the isometric mapping $G_{+}^{\nicefrac{1}{2}}\iota_{+}\colon \dom \iota_{+} \subseteq \hs \to \hsmid$.

\subsection{Decomposition into two ``ordinary'' Gelfand triples}

In this section we will see that every quasi Gelfand triple of Hilbert spaces can be decomposed into two ``ordinary'' Gelfand triple. This means for a quasi Gelfand triple $(\hs,\hsmid,\hsdual)$ there exist ``ordinary'' Gelfand triples $\hs^{1} \subseteq \hsmid^{1} \subseteq \hsdual^{1}$ and $\hs^{2} \subseteq \hsmid^{2} \subseteq \hsdual^{2}$ such that
\begin{equation*}
  \hs = \hs^{1} \oplus \hsdual^{2},\quad
  \hsmid = \hsmid^{1} \oplus \hsmid^{2}
  \quad\text{and}\quad
  \hsdual = \hsdual^{1} \oplus \hs^{2}.
\end{equation*}

\begin{theorem}\label{th:decomposition-into-ordinary-Gelfand-triple}
  Let $(\hs,\hsmid,\hsdual)$ be a quasi Gelfand triple of Hilbert spaces. Then there exist two ``ordinary'' Gelfand triple $\hs^{1} \subseteq \hsmid^{1} \subseteq \hsdual^{1}$ and $\hs^{2} \subseteq \hsmid^{2} \subseteq \hsdual^{2}$ such that
  \begin{equation*}
    \hs = \hs^{1} \oplus \hsdual^{2},\quad
    \hsmid = \hsmid^{1} \oplus \hsmid^{2}
    \quad\text{and}\quad
    \hsdual = \hsdual^{1} \oplus \hs^{2}.
  \end{equation*}
\end{theorem}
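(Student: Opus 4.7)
The plan is to invoke the spectral theorem for the Gram operator $G = G_{+}$ of $(\hs,\hsmid,\hsdual)$ and split $\hsmid$ according to whether $G$ acts as $\geq I$ or $\leq I$. Let $E$ denote the spectral measure of the self-adjoint, positive, injective operator $G$, set $P := E([1,\infty))$ and $Q := I - P$, and put $\hsmid^{1} := P\hsmid$ and $\hsmid^{2} := Q\hsmid$. Since $P, Q$ reduce $G$, we obtain $G = G^{1} \oplus G^{2}$ with $G^{i} := G|_{\hsmid^{i}}$; on $\hsmid^{1}$ we have $G^{1} \geq I$ (so $G^{1}$ is boundedly invertible on $\hsmid^{1}$), and on $\hsmid^{2}$ the operator $G^{2}$ is bounded with $G^{2} \leq I$. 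Both $G^{1}$ and $(G^{2})^{-1}$ are self-adjoint, positive, injective, and have dense range on their respective summands.

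Next I would apply \Cref{th:Gram-operator-gives-quasi-Gelfand-triple} twice. To $G^{1}$ on $\hsmid^{1}$ it produces a quasi Gelfand triple $(\hs^{1}, \hsmid^{1}, \hsdual^{1})$ with Gram operator $G^{1}$. Because $(G^{1})^{1/2} \geq I$, the induced norm $\|f\|_{\hs^{1}} = \|(G^{1})^{1/2} f\|_{\hsmid^{1}}$ dominates $\|f\|_{\hsmid^{1}}$, so the embedding $\iota_{+}^{1}$ is continuous and this triple is an ordinary Gelfand triple. To $(G^{2})^{-1}$ on $\hsmid^{2}$ it produces an ordinary Gelfand triple $(\hs^{2}, \hsmid^{2}, \hsdual^{2})$ whose Gram operator is $(G^{2})^{-1}$, again because $(G^{2})^{-1} \geq I$. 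Explicitly, $\hs^{2}$ is the completion of $\ran (G^{2})^{1/2}$ under $\|(G^{2})^{-1/2} \cdot\|_{\hsmid^{2}}$, while $\hsdual^{2}$ is the completion of $\hsmid^{2}$ under $\|(G^{2})^{1/2} \cdot\|_{\hsmid^{2}}$.

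The direct sum decompositions then follow from the spectral calculus. Any $f \in \dom G^{1/2}$ decomposes uniquely as $f = Pf + Qf$ with $Pf \in \dom (G^{1})^{1/2}$ and $Qf \in \hsmid^{2} = \dom (G^{2})^{1/2}$, and since $G^{1/2}$ commutes with $P$ and $Q$ one gets $\|f\|_{\hs}^{2} = \|(G^{1})^{1/2} Pf\|_{\hsmid}^{2} + \|(G^{2})^{1/2} Qf\|_{\hsmid}^{2} = \|Pf\|_{\hs^{1}}^{2} + \|Qf\|_{\hsdual^{2}}^{2}$; passing to completions yields $\hs = \hs^{1} \oplus \hsdual^{2}$ isometrically. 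Applying the same calculation to $G_{-} = G^{-1} = (G^{1})^{-1} \oplus (G^{2})^{-1}$ (using \Cref{th:Gminus-equals-Gplus-inverse}, and noting that $(G^{1})^{-1}$ and $(G^{2})^{-1}$ are the Gram operators governing $\hsdual^{1}$ and $\hs^{2}$ respectively) yields $\hsdual = \hsdual^{1} \oplus \hs^{2}$, while $\hsmid = \hsmid^{1} \oplus \hsmid^{2}$ is the starting orthogonal decomposition.

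The main obstacle is the bookkeeping around the role swap between $\hs$ and $\hsdual$ on the $\hsmid^{2}$ component: there the $\hs$-norm is weaker than the $\hsmid^{2}$-norm, so the $\hsmid^{2}$-completion in the $\hs$-norm is the \emph{large} space $\hsdual^{2}$ of the second ordinary Gelfand triple, whereas the $\hsdual$-norm restricted to $\hsmid^{2}$ (whose Gram operator is $(G^{2})^{-1} \geq I$) furnishes the \emph{small} space $\hs^{2}$. Once this swap is made explicit through the identification of the Gram operator on each spectral summand, the verification of the three orthogonal sum formulas reduces to straightforward spectral bookkeeping.
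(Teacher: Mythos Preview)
Your proposal is correct and follows essentially the same strategy as the paper: split $\hsmid$ via the spectral projection of the Gram operator at the threshold $1$, and verify that on each summand the induced norms give an ordinary Gelfand triple (with the roles of $+$ and $-$ swapped on the second summand). The paper carries this out in six explicit steps working with the spectral measure of $G_{+}^{\nicefrac{1}{2}}$ and the interval $(1,\infty)$, extending the spectral projections by hand to $\hs$ and $\hsdual$ and checking the dualities directly; you instead invoke \Cref{th:Gram-operator-gives-quasi-Gelfand-triple} on each spectral summand, which is a cleaner packaging of the same computation. The only point you leave implicit is the surjectivity when ``passing to completions'' to get $\hs = \hs^{1}\oplus\hsdual^{2}$: one should note that the image of $\dom G^{\nicefrac{1}{2}}$ under $f\mapsto (Pf,Qf)$ is $\dom (G^{1})^{\nicefrac{1}{2}} \times \hsmid^{2}$, which is dense in $\hs^{1}\times\hsdual^{2}$, so the isometric extension is onto.
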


This means that every quasi Gelfand triple (of Hilbert spaces) is the result of two ``ordinary'' Gelfand triple that are cross-wise composed.

\begin{proof}
  We will step the proof in several steps:

  \begin{enumerate}[label=\textup{\textbf{\arabic{*}.~Step:}},leftmargin=0pt, align=left, itemindent=*, labelsep=3pt, itemsep=3pt, parsep=2pt]
    \item \emph{Decomposition of $\hsmid$.}
          Let $G_{+}$ be the Gram operator of the quasi Gelfand triple and $G_{+}^{\nicefrac{1}{2}}$ its positive square root. Then there exists a spectral measure $E$ for $G_{+}^{\nicefrac{1}{2}}$ such that $G_{+}^{\nicefrac{1}{2}} = \int_{\R_{+}} \lambda \dx[E(\lambda)]$. We can decompose $\hsmid$ into
          \begin{equation*}
            \hsmid =
            \underbrace{\ran E((1,\infty))}_{\eqqcolon\mathrlap{\hsmid^{1}}}
            \mathclose{} \oplus \mathopen{}
            \underbrace{\ran E((0,1])}_{\eqqcolon\mathrlap{\hsmid^{2}}}.
          \end{equation*}
          By spectral theory $\hsmid^{2} = \ran E((0,1]) \subseteq \dom G_{+}^{\nicefrac{1}{2}} = \ran \iota_{+} = D_{+}$, as $(0,1]$ is a bounded set. We can write every $f \in D_{+}$ as
          \begin{equation*}
            f = E((1,\infty))f + E((0,1])f
          \end{equation*}
          and since $E((0,1]) f \in D_{+}$, we conclude that also $E((1,\infty))f \in D_{+}$. For an arbitrary $f \in \hsmid^{1} \subseteq \hsmid$ there exists a sequence $(f_{n})_{n\in\N}$ in $D_{+}$ such that $f_{n} \to f$ w.r.t.\ $\norm{\cdot}_{\hsmid}$. Since also $(E((1,\infty))f_{n})_{n\in\N}$ converges to $E((1,\infty))f = f$ by continuity, and $E((1,\infty))f \in D_{+}$, we conclude that $\hsmid^{1} \cap D_{+}$ is dense in $\hsmid^{1}$ (w.r.t.\ $\norm{\cdot}_{\hsmid}$). On the other hand, $\hsmid^{2} \subseteq D_{+}$.

    \item \emph{Decomposition of $\hs$.}
          For $f \in D_{+}$ we have
          \begin{multline*}
            \norm{E((0,1]) f}_{\hs}^{2} = \norm{G_{+}^{\nicefrac{1}{2}} E((0,1]) f}_{\hsmid}^{2}
            \\ = \int_{(0,1]} \abs{\lambda}^{2} \dx[E_{f,f}]
            \leq \int_{(0,\infty)} \abs{\lambda}^{2} \dx[E_{f,f}] = \norm{f}_{\hs}^{2},
          \end{multline*}
          and
          \begin{multline*}
          \norm{E((1,\infty)) f}_{\hs}^{2} = \norm{G_{+}^{\nicefrac{1}{2}} E((1,\infty)) f}_{\hsmid}^{2}
          \\= \int_{(1,\infty)} \abs{\lambda}^{2} \dx[E_{f,f}]
          \leq\int _{(0,\infty)} \abs{\lambda}^{2} \dx[E_{f,f}] = \norm{f}_{\hs}^{2}.
          \end{multline*}
          Hence, the spectral projections $E((0,1])$ and $E((1,\infty))$ are also continuous on $D_{+}$ with respect to $\norm{\cdot}_{\hs}$ and we can extend these projections continuously on $\hs$. Note that for $f \in D_{+}$ we have $G^{\nicefrac{1}{2}} E(\Delta) f = E(\Delta) G^{\nicefrac{1}{2}}$ for all $\Delta$ in the Borel sets of $\R$. Hence, we have for $f,g \in D_{+}$
          \begin{multline*}
            \scprod{E((0,1])f}{E((1,\infty))g}_{\hs}
            = \scprod{G_{+}^{\nicefrac{1}{2}}E((0,1])f}{G_{+}^{\nicefrac{1}{2}}E((1,\infty))g}_{\hsmid} \\
            = \vphantom{\underbrace{E}_{=0}}
            \scprod{G_{+}\smash[b]{\underbrace{E((1,\infty)) E((0,1])}_{=\mathrlap{0}}}f}{g}_{\hsmid} = 0,
          \end{multline*}
          which implies that the extensions of $E((0,1])\big\vert_{D_{+}}$ and $E((1,\infty))\big\vert_{D_{+}}$ are orthogonal projections on $\hs$.
          Moreover, for $f \in \hs$ there exists a sequence $(f_{n})_{n\in\N}$ in $D_{+}$ that converges to $f$ w.r.t.\ $\norm{\cdot}_{\hs}$. By the continuity of projections we conclude that $(E((0,1]) f_{n})_{n\in\N}$ and $(E((1,\infty))f_{n})_{n\in\N}$ converges and therefore
          \begin{multline*}
            f = \lim_{n\to\infty} f_{n} = \lim_{n\to\infty} E((0,1]) f_{n} + E((1,\infty)) f_{n} \\
            = \lim_{n\to\infty} E((0,1]) f_{n} + \lim_{n\to\infty}E((1,\infty)) f_{n}.
          \end{multline*}
          This leads to: the extensions of these projections are also complementary. We denote these extensions by $E((0,1])_{+}$ and $E((1,\infty))_{+}$ and we have
          \begin{equation*}
            \hs =
            \underbrace{\ran E((1,\infty))_{+}}_{\eqqcolon \mathrlap{\hs^{1}}}
            \mathclose{} \oplus \mathopen{}
            \underbrace{\ran E((0,1])_{+}}_{\eqqcolon \mathrlap{\hsdual^{2}}}.
          \end{equation*}

    \item \emph{Relationship between the decompositions of $\hsmid$ and $\hs$.}
          Note that $E((1,\infty))_{+} D_{+} = E((1,\infty))D_{+} = \hsmid^{1} \cap D_{+}$. Furthermore, for $f \in \hsmid^{1} \cap D_{+}$ we have
          \begin{multline}\label{eq:hs-norm-stronger-than-hsmid-norm}
            \norm{f}_{\hs}^{2} = \norm{E((1,\infty)) f}_{\hs}^{2} = \norm{G_{+}^{\nicefrac{1}{2}} E((1,\infty)) f}_{\hsmid}^{2} \\
            = \int_{(1,\infty)} \abs{\lambda}^{2} \dx[E_{f,f}] \geq \inf_{\lambda \in (1,\infty)} \abs{\lambda}^{2} \norm{f}_{\hsmid}^{2} \geq \norm{f}_{\hsmid}^{2}.
          \end{multline}
          Now for $f \in \hs^{1}$ there exists a sequence $(f_{n})_{n\in\N}$ in $D_{+}$ that converges to $f$ w.r.t.\ $\norm{\cdot}_{\hs}$ and therefore also $(\tilde{f}_{n})_{n\in\N} = (E((1,\infty))_{+}f_{n})_{n\in\N}$ converges to $f$ w.r.t.\ $\norm{\cdot}_{\hs}$. By \eqref{eq:hs-norm-stronger-than-hsmid-norm} we have
          \begin{align*}
            \norm{\tilde{f}_{n} - \tilde{f}_{m}}_{\hsmid} \leq \norm{\tilde{f}_{n} - \tilde{f}_{m}}_{\hs} \to 0,
          \end{align*}
          which implies that $(\tilde{f}_{n})_{n\in\N}$ is a Cauchy sequence in $\hsmid^{1}$ (w.r.t.\ $\norm{\cdot}_{\hsmid}$). By the closedness of $\iota_{+}$ the limit of this sequence (w.r.t.\ $\norm{\cdot}_{\hsmid}$) has to coincide with $f$. Hence, $\hs^{1} = \hsmid^{1} \cap D_{+}$ and the restricted embedding $\iota_{+}\big\vert_{\hs^{1}}\colon \hs^{1} \to \hsmid^{1}$ is continuous.

          On the other hand, since $\hsmid^{2} \subseteq D_{+}$ we automatically have $\hsmid^{2} \subseteq \hsdual^{2}$, by construction. Furthermore, for $f \in \hsmid^{2}$ we have
          \begin{multline}\label{eq:hs-norm-weaker-than-hsmid-norm}
            \norm{f}_{\hs}^{2} = \norm{E((0,1]) f}_{\hs}^{2} = \norm{G_{+}^{\nicefrac{1}{2}} E((0,1]) f}_{\hsmid}^{2} \\
            = \int_{(0,1]} \abs{\lambda}^{2} \dx[E_{f,f}] \leq \sup_{\lambda \in (0,1]} \abs{\lambda}^{2} \norm{f}_{\hsmid}^{2} \leq \norm{f}_{\hsmid}^{2}.
          \end{multline}
          This implies that the inverse embedding $\iota_{+}^{-1}$ restricted to $\hsmid^{2}$ is continuous, i.e., $\iota_{+}^{-1}\big\vert_{\hsmid^{2}}\colon \hsmid^{2} \to \hsdual^{2}$ is continuous. Hence, we have
          \begin{align*}
            \hsmid^{2} \subseteq \hsdual^{2}\quad\text{and}\quad \hs^{1} \subseteq \hsmid^{1}
          \end{align*}
          densely with continuous embeddings

    \item \emph{Decomposition of $\hsdual$.}
          Note that for $g \in D_{-}$ we have
          \[
          \norm{g}_{\hsdual} = \norm{G_{-}^{\nicefrac{1}{2}}g}_{\hsmid} = \norm{G_{+}^{-\nicefrac{1}{2}}g}_{\hsmid}
          \]
          and additionally by the rules for the spectral calculus we have
          \[
          G_{-}^{\nicefrac{1}{2}} = G_{+}^{-\nicefrac{1}{2}} = \int_{(0,\infty)} \frac{1}{\lambda} \dx[E].
          \]
          Hence, the exact same construction as in the second step (replace $\hs$ by $\hsdual$, $D_{+}$ by $D_{-}$, $G_{+}$ by $G_{-}$ and $\abs{\lambda}$ by $\abs{\frac{1}{\lambda}}$) gives the decomposition
          \begin{equation*}
            \hsdual =
            \underbrace{\ran E((1,\infty))_{-}}_{\eqqcolon\mathrlap{\hsdual^{1}}}
            \mathclose{} \oplus \mathopen{}
            \underbrace{\ran E((0,1])_{-}}_{\eqqcolon\mathrlap{\hs^{2}}}.
          \end{equation*}

    \item \emph{Relation ship between the decompositions of $\hsmid$ and $\hsdual$.}
          Again repeating the arguments of the third step.
          In particular, for $g \in D_{-}$ we have
          \begin{multline*}
            \norm{E((0,1])g}_{\hsdual}^{2} = \norm{G_{-}^{\nicefrac{1}{2}}E((0,1])g}_{\hsmid}^{2} \\
            = \int_{(0,1]} \abs*{\frac{1}{\lambda}}^{2} \dx[E_{g,g}] \geq \inf_{\lambda \in (0,1]} \abs*{\frac{1}{\lambda}}^{2} \norm{g}_{\hsmid}^{2} = \norm{g}_{\hsmid}^{2}
          \end{multline*}
          and
          \begin{multline*}
            \norm{E((1,\infty))g}_{\hsdual}^{2} = \norm{G_{-}^{\nicefrac{1}{2}}E((1,\infty))g}_{\hsmid}^{2} \\
            = \int_{(1,\infty)} \abs*{\frac{1}{\lambda}}^{2} \dx[E_{g,g}] \leq \inf_{\lambda \in (1,\infty)} \abs*{\frac{1}{\lambda}}^{2} \norm{g}_{\hsmid}^{2} = \norm{g}_{\hsmid}^{2}.
          \end{multline*}
          This implies
          \(
          \iota_{-}\big\vert_{\hs^{2}} \colon \hs^{2} \to \hsmid^{2}
          \)
          and
          \(
          \iota_{-}^{-1}\big\vert_{\hsmid^{1}} \colon \hsmid^{1} \to \hsdual^{1}
          \)
          are continuous. In particular, we have
          \begin{equation*}
            \hs^{2} \subseteq \hsmid^{2}
            \quad\text{and}\quad
            \hsmid^{1} \subseteq \hsdual^{1}
          \end{equation*}
          densely with continuous embeddings.

    \item \emph{Dualities.}
          By Hahn-Banach we can identify $(\hsdual^{2})\dual$ with $\hs\dual\big\vert_{\hsdual^{2}}$.
          Moreover, for $f \in \hs$ and $g \in \hsdual$ there exist sequences $(f_{n})_{n\in\N}$ in $D_{+}$ and $(g_{n})_{n\in\N}$ in $D_{-}$ such that $f_{n} \to f$ w.r.t.\ $\norm{\cdot}_{\hs}$ and $g_{n} \to g$ w.r.t.\ $\norm{\cdot}_{\hsdual}$. Hence,
          \begin{align*}
            \dualprod{E((0,1])_{+}f}{E((1,\infty))_{-}g}_{\hs,\hsdual}
            &= \lim_{n\to\infty}\dualprod{E((0,1])_{+}f_{n}}{E((1,\infty))_{-}g_{n}}_{\hs,\hsdual} \\
            &= \lim_{n\to\infty} \scprod{E((0,1])f_{n}}{E((1,\infty))g_{n}}_{\hsmid} = 0.
          \end{align*}
          Clearly, we also have $\dualprod{E((1,\infty))_{+}f}{E((0,1])_{-}g}_{\hs,\hsdual} = 0$.
          For $\phi \in (\hsdual^{2})\dual$ there exists a $g \in \hsdual$ such that
          \begin{equation*}
            \phi(f) = \dualprod{g}{f}_{\hsdual,\hs}
            = \dualprod{E((0,1])_{-}g}{f}_{\hsdual,\hs} + \underbrace{\dualprod{E((1,\infty))_{-}g}{f}_{\hsdual,\hs}}_{=\mathrlap{0}}
            \quad\forall
            f \in \hsdual^{2}.
          \end{equation*}
          Moreover,
          \begin{align*}
            \norm{\phi}_{(\hsdual^{2})\dual}
            &= \sup_{f \in \hsdual^{2}\setminus\sset{0}} \frac{\abs{\phi(f)}}{\norm{f}_{\hs}}
              = \sup_{f \in \hsdual^{2}\setminus\sset{0}} \frac{\abs{\dualprod{E((0,1])_{-}g}{f}_{\hsdual,\hs}}}{\norm{f}_{\hs}} \\
            &= \sup_{f \in \hs\setminus\sset{0}} \frac{\abs{\dualprod{E((0,1])_{-}g}{f}_{\hsdual,\hs}}}{\norm{f}_{\hs}}
              = \norm{E((0,1])_{-}g}_{\hsdual}
          \end{align*}
          On the other hand, if $\dualprod{E((0,1])_{-}g}{f}_{\hsdual,\hs} = 0$ for all $f \in \hsdual^{2}$, then we automatically have $\dualprod{E((0,1])_{-}g}{f}_{\hsdual,\hs} = 0$ for all $f \in \hs$ and therefore $E((0,1])_{-}g = 0$.
          In conclusion $(\hsdual^{2},\hs^{2})$ is a complete dual pair and $(\hsdual^{2},\hsmid^{2},\hs^{2})$ is a quasi Gelfand triple with the embeddings $\iota_{+}\big\vert_{\hsdual^{2}}$ and $\iota_{-}\big\vert_{\hs^{2}}$. Moreover, since $\iota_{-}\big\vert_{\hs^{2}}$ is continuous, it is even an ``ordinary'' Gelfand triple ($\hs^{2} \subseteq \hsmid^{2} \subseteq \hsdual^{2}$).

          We can show completely analogously that also $(\hs^{1},\hsmid^{1},\hsdual^{1})$ is an ``ordinary'' Gelfand triple ($\hs^{1} \subseteq \hsmid^{1} \subseteq \hsdual^{1}$). \qedhere
  \end{enumerate}
\end{proof}

Note that this decomposition is not unique as we could have split the space $\hsmid$ by any two subspaces $\ran E(\Delta)$ and $\ran E(\Delta^{\complement})$, where $\Delta \subseteq \R_{+}$ is a bounded non-empty Borel set.

Finally, we end with two conjectures

\begin{conjecture}[weak]\label{con:weak}
  Every pre-quasi Gelfand triple of Hilbert spaces is a quasi Gelfand triple.
\end{conjecture}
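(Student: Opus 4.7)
The plan is to reduce the conjecture to an identity between two self-adjoint operators on $\hsmid$ via the Gram operator framework of \Cref{sec:gram-operator}. By \Cref{le:extension-of-iota-minus-to-fit-def} the operator $\hat\iota_- \coloneqq (\iota_+\adjun)^{-1}$ extends $\iota_-$ and, together with $\iota_+$, already constitutes a genuine quasi Gelfand triple on the same three spaces $(\hs,\hsmid,\hsdual)$. Thus the goal is exactly $\iota_- = \hat\iota_-$, equivalently $\ran \iota_- = \dom \iota_+\adjun$. I would associate to the pre-quasi Gelfand triple the two self-adjoint positive injective operators on $\hsmid$
\[
  G_+ \coloneqq (\iota_+ \iota_+\hadjun)^{-1},
  \qquad
  H \coloneqq (\iota_- \iota_-\hadjun)^{-1}.
\]
Applying \Cref{th:Gminus-equals-Gplus-inverse} to the canonical quasi Gelfand triple with $\iota_+$ and $\hat\iota_-$ yields $\hat\iota_-\hat\iota_-\hadjun = G_+$, so the conjecture is equivalent to $H = G_+^{-1}$; once that identity is secured, \Cref{th:Gplus-defines-hs-inner-product} (applied separately to $\iota_-$ and $\hat\iota_-$) gives $\ran \iota_- = \dom H^{\nicefrac{1}{2}} = \dom G_+^{-\nicefrac{1}{2}} = \ran \hat\iota_-$ with matching norms, closing the loop.

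The first concrete step is to unpack what the pre-quasi condition says for Hilbert adjoints. Using the unitary duality map $\Psi\colon \hsdual \to \hs$ and $\dualprod{g}{f}_{\hsdual,\hs} = \scprod{\Psi g}{f}_{\hs}$, one directly obtains
\[
  \iota_+\hadjun \iota_- = \Psi \text{ on } \dom \iota_-,
  \qquad
  \iota_-\hadjun \iota_+ = \Psi^{-1} \text{ on } \dom \iota_+.
\]
These identities enable the formal computation: for $h \in \dom \iota_+\iota_+\hadjun$ with $\Psi^{-1}\iota_+\hadjun h \in \dom \iota_-$ one finds
\[
  \iota_-\iota_-\hadjun(\iota_+\iota_+\hadjun h)
  = \iota_-(\iota_-\hadjun \iota_+)(\iota_+\hadjun h)
  = \iota_- \Psi^{-1} \iota_+\hadjun h = h,
\]
where the last equality uses injectivity of $\iota_+\hadjun$, which follows from $\ran \iota_+$ being dense in $\hsmid$. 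In operator form this reads $H h = G_+^{-1} h$ on the subdomain described by $\iota_+\hadjun h \in \Psi(\dom \iota_-)$. Since both $H$ and $G_+^{-1}$ are self-adjoint on $\hsmid$, if this subdomain can be shown to be a core of $G_+^{-1}$, the closedness of $H$ propagates the identity to all of $\dom G_+^{-1}$, which by maximality of self-adjoint operators among symmetric ones forces $H = G_+^{-1}$.

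The hard part will be the core-density step. Density of $\dom \iota_-$ in $\hsdual$ gives density of $\Psi(\dom \iota_-)$ in $\hs$, and $\ran \iota_+\hadjun$ is itself dense in $\hs$, but density of two subspaces of $\hs$ does not entail density of their intersection in $\iota_+\hadjun(\dom \iota_+\iota_+\hadjun)$ in the graph norm of $\iota_+\iota_+\hadjun$. Moreover, the natural containment $\iota_- \subseteq \hat\iota_-$ reverses under Hilbert adjoints to $\iota_-\hadjun \supseteq \hat\iota_-\hadjun$, and assembling $\iota_-\iota_-\hadjun$ demands that the intermediate vector $\iota_-\hadjun h$ lie in the \emph{smaller} set $\dom \iota_-$—precisely the information a general pre-quasi Gelfand triple does not a priori provide. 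A reasonable attempt would be to exploit the spectral resolution of $G_+$, since bounded Borel functions of $G_+$ leave $\ran \iota_-$ invariant on suitable spectral intervals, and use this to manufacture approximants of an arbitrary $h \in \dom G_+^{-1}$ that land in the subdomain in graph norm. Whether such a construction can really be carried through, or whether some pre-quasi Gelfand triple with $\dom \iota_-$ chosen sufficiently ``thin'' provides a counterexample, is the genuine content of \Cref{con:weak}; the plan is to isolate the question as this clean core-density problem inside the spectral theory of $G_+$.
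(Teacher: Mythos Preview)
The statement you are addressing is a \emph{conjecture} in the paper, not a theorem; the paper gives no proof. Immediately after stating it the author writes that ``all attempts failed so far'' and that \Cref{th:Zm-is-an-initial-object} and \Cref{th:decomposition-into-ordinary-Gelfand-triple} are byproducts of such failed attempts. There is therefore no proof in the paper to compare against.

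Your proposal is, correspondingly, not a proof but a reduction, and you are explicit about this. The reduction itself is sound: passing via \Cref{le:extension-of-iota-minus-to-fit-def} to the genuine quasi Gelfand triple $(\iota_+,\hat\iota_-)$ and invoking \Cref{th:Gminus-equals-Gplus-inverse} and \Cref{th:Gplus-defines-hs-inner-product} (whose proofs only use closedness, injectivity, dense domain and dense range of the embedding, all of which hold for $\iota_-$ in a pre-quasi triple) correctly recasts the conjecture as the operator identity $H=G_+^{-1}$ between two self-adjoint operators on $\hsmid$. Your formal computation $\iota_-\iota_-\hadjun(\iota_+\iota_+\hadjun h)=h$ on the indicated subdomain is correct, and your diagnosis of the remaining obstacle is accurate: one needs that $\{h\in\dom\iota_+\iota_+\hadjun:\Psi^{-1}\iota_+\hadjun h\in\dom\iota_-\}$ is a core of $G_+^{-1}$, and the density information supplied by the pre-quasi axioms does not obviously deliver this. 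The observation that a sufficiently ``thin'' $\dom\iota_-$ is where a counterexample would have to live is also the right shape.

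In short, you have cleanly isolated the open content of \Cref{con:weak} as a core-density problem in the spectral theory of $G_+$, using precisely the Gram-operator machinery of \Cref{sec:gram-operator}; this is consistent with the paper's own stance that the question is open, and plausibly overlaps with the author's unreported attempts.
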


\begin{conjecture}[strong]\label{con:strong}
  Every pre-quasi Gelfand triple is a quasi Gelfand triple.
\end{conjecture}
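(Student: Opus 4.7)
The plan is to reduce the conjecture to showing that $\iota_-$ equals its canonical maximal extension $\hat\iota_- := (\iota_+\adjun)^{-1}$ from \Cref{le:extension-of-iota-minus-to-fit-def}; since $\iota_- \subseteq \hat\iota_-$ is immediate from \eqref{eq:iota-plus-adjun-equals-iota-minus-inverse}, the whole content is the reverse inclusion $\dom \iota_+\adjun \subseteq \ran \iota_-$. Equivalently, I would try to show that $\ran \iota_-$ is a core for the closed operator $\iota_+\adjun$: if I can produce, for every $x \in \dom \iota_+\adjun$, a sequence $(x_n)_{n\in\N} \subseteq \ran \iota_-$ with $x_n \to x$ in $\hsmid$ and $\iota_+\adjun x_n \to \iota_+\adjun x$ in $\hsdual$, then closedness of $\iota_-^{-1}$ (which equals $\iota_+\adjun$ on $\ran \iota_-$) pins $x$ into $\ran \iota_-$.

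In the Hilbert setting of the weak conjecture I would generate such approximants through spectral truncation. The operator $A := \iota_- \iota_-\hadjun$ on $\hsmid$ is self-adjoint and positive (by the standard $T\adjun T$ argument from \Cref{th:TTadjun-self-adjoint}), its spectral projections $P_n := E_A([1/n,n])$ are bounded, and a direct computation using the identity $\iota_- \iota_-\hadjun x = A x$ on $\ran P_n$ shows $\ran P_n \subseteq \ran A \subseteq \ran \iota_-$. Setting $x_n := P_n x$ yields $x_n \to x$ in $\hsmid$ for every $x$. The crucial and delicate step is the accompanying convergence $\iota_+\adjun x_n \to \iota_+\adjun x$ in $\hsdual$; this would have to be extracted from a commutation relation between $P_n$ and $\iota_+\adjun$, which in turn depends on whether $A$ and $\iota_+\adjun$ admit a joint functional calculus or at least an interpolation scale between $\hsmid$ and $\hsdual$ adapted to $A$.

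The strong conjecture drops the Hilbert hypothesis, and here the spectral scaffolding collapses. My fallback would be to replace spectral projections by Yosida-type regularizations $J_\lambda := \lambda(\lambda + \iota_-\iota_+\adjun)^{-1}$, provided one can justify their existence and boundedness from closedness alone, or to invoke a McIntosh-style sectorial functional calculus for the composition $\iota_- \iota_+\adjun$ whenever it is sectorial on $\hsmid$. Formulating the substitute for the self-adjoint operator $A$ outside a Hilbert context is itself a nontrivial problem, since self-adjointness was the entire reason the spectral approach could be trusted.

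The hardest point, and the reason the statement remains a conjecture, is conceptual: the pre-quasi Gelfand identity \eqref{eq:quasi-gelfand-triple-condition} only controls the pairing $\scprod{\iota_- g}{\iota_+ f}_{\hsmid}$ between the two domains, and offers no obvious mechanism preventing $\iota_-$ from being a \emph{proper} closed restriction of $(\iota_+\adjun)^{-1}$ that nonetheless has dense domain in $\hsdual$ and dense range in $\hsmid$. Accordingly, in parallel with the approximation argument above I would search for a counterexample --- most naturally on weighted $\ell^2$ triples where one can explicitly shrink $\dom \iota_-$ to a dense but non-maximal closed subspace of $\dom (\iota_+\adjun)^{-1}$ with carefully controlled range --- since either such an example emerges and refutes the conjecture, or the obstructions encountered will pinpoint the structural reason why maximality is automatic.
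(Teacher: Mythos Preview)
The paper contains no proof of this statement: \Cref{con:strong} is explicitly left open, with the author remarking that ``all attempts failed so far'' and that the strong version ``seems much more difficult, as a lot of Hilbert space theory is unavailable.'' So there is nothing to compare your argument against.

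That said, your submission is not a proof either, and you are candid about this. What you have written is a correct diagnosis of the problem together with a research outline. Your reduction is the right one: the content of the conjecture is precisely the inclusion $\dom \iota_{+}\adjun \subseteq \ran \iota_{-}$, equivalently that $\iota_{-}$ is not a proper closed restriction of $(\iota_{+}\adjun)^{-1}$, and the paper's \Cref{le:extension-of-iota-minus-to-fit-def} makes this explicit by exhibiting the maximal extension $\hat{\iota}_{-} = (\iota_{+}\adjun)^{-1}$. Your spectral-truncation idea for the weak (Hilbert) version is in the same spirit as the machinery the author developed; indeed the author states that \Cref{th:Zm-is-an-initial-object} and \Cref{th:decomposition-into-ordinary-Gelfand-triple} arose from failed attempts at \Cref{con:weak}, so the obstruction you flag --- the missing commutation between the spectral projections of $A = \iota_{-}\iota_{-}\hadjun$ and $\iota_{+}\adjun$ --- is exactly where the author's attempts also stalled.

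Your final paragraph is the most honest part: nothing in the axioms of a pre-quasi Gelfand triple visibly obstructs $\iota_{-}$ from being a proper dense closed restriction of $(\iota_{+}\adjun)^{-1}$, and a weighted-$\ell^{2}$ search for a counterexample is a reasonable next step. But as it stands you have not resolved the conjecture in either direction, and neither has the paper.
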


At least the weak conjecture seems to be true, but all attempts failed so far. In fact \Cref{th:Zm-is-an-initial-object} and \Cref{th:decomposition-into-ordinary-Gelfand-triple} are the result of failed attempts to prove the weak conjecture. The strong conjecture seems much more difficult, as a lot of Hilbert space theory is unavailable.

A positive answer to (at least) the weak conjecture would automatically answer the question whether the weak and strong definition of boundary trace operators for differential operators coincide.

\section*{Conclusion}

We have introduces a generalization of Gelfand triple that does not need continuous embeddings. This was done by replacing the continuity of the embeddings by closedness. We showed that $D_{+} \cap D_{-}$, the set that is in the intersection of the quasi Gelfand triple, is dense in the pivot space $\hsmid$.

If we regard quasi Gelfand triples of Hilbert spaces, then we can show that $D_{+} \cap D_{-}$ is also dense in $\hs$ and $\hsdual$ w.r.t.\ their norms. Furthermore, we have shown that there exists a smallest space were we can embed the entire quasi Gelfand triple structure preservingly.

Finally, we have shown that every quasi Gelfand triple is associated to a Gram operator and the other way round. This led us to a decomposition of the quasi Gelfand triple into two ``ordinary'' Gelfand triples.

We ended with the weak and strong version of the conjecture that every pre-quasi Gelfand triple is in fact already a quasi Gelfand triple.

One application that we did not cover, that is still worth mentioning:
Quasi Gelfand triples can be used to properly define boundary spaces and characterizing suitable boundary conditions for partial differential equations that lead to existence and uniqueness of solutions, see~\cite{Sk21}.

\appendix
\section{Auxiliary Results}

\begin{lemma}\label{th:weak-convergent-bounded}
  Let $(x_n)_{n\in\N}$ be a sequence in a normed vector space $X$ that converges w.r.t.\ the weak topology to an $x_0\in X$. Then $(x_n)_{n\in\N}$ is bounded, i.e., $\sup_{n\in\N} \norm{x_n}_{X} < +\infty$.
\end{lemma}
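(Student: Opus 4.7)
The plan is to apply the Banach--Steinhaus (uniform boundedness) principle in the dual space $X\dual$. The key observation is that while $X$ itself need not be complete, its (anti-)dual $X\dual$ always is, so Banach--Steinhaus is available there.

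First I would introduce the canonical embedding $J\colon X \to X\dual[2]$ defined by $J(x)(x\dual) \coloneqq x\dual(x)$ for $x\dual \in X\dual$. A standard consequence of Hahn--Banach is that $J$ is isometric, i.e.\ $\norm{J(x)}_{X\dual[2]} = \norm{x}_{X}$ for every $x \in X$. So the asserted boundedness of $(x_{n})_{n\in\N}$ in $X$ is equivalent to the boundedness of $(J(x_{n}))_{n\in\N}$ in $X\dual[2]$.

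Next I would translate the weak convergence hypothesis: $x_{n} \to x_{0}$ weakly means $x\dual(x_{n}) \to x\dual(x_{0})$ for every $x\dual \in X\dual$. In particular, for every fixed $x\dual \in X\dual$ the scalar sequence $(J(x_{n})(x\dual))_{n\in\N} = (x\dual(x_{n}))_{n\in\N}$ is convergent, hence bounded. Thus the family $\set{J(x_{n})}_{n\in\N}$ of continuous linear functionals on the Banach space $X\dual$ is pointwise bounded.

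Finally, Banach--Steinhaus applied to the family $\set{J(x_{n})}_{n\in\N} \subseteq \mathcal{L}(X\dual,\C)$ yields $\sup_{n\in\N} \norm{J(x_{n})}_{X\dual[2]} < +\infty$, and by the isometry of $J$ this gives $\sup_{n\in\N} \norm{x_{n}}_{X} < +\infty$. The only subtlety — and the reason one passes to the bidual rather than trying to invoke Banach--Steinhaus directly on $X$ — is that $X$ is only assumed to be a normed space, not complete; the trick of viewing $x_{n}$ as a functional on the automatically complete space $X\dual$ circumvents this.
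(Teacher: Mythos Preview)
Your proof is correct and follows essentially the same approach as the paper's: both pass to the canonical isometric embedding into the bidual, use weak convergence to get pointwise boundedness of the image sequence as functionals on $X\dual$, and then invoke the uniform boundedness principle there. Your write-up is slightly more explicit (mentioning Hahn--Banach for the isometry and the completeness of $X\dual$ as the reason to work in the bidual), but the argument is the same.
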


\begin{proof}
  \newcommand{\canEm}{\iota} 
  Let $\canEm$ denote the canonical embedding from $X$ into $X\dual[2]$ that maps $x$ to $\dualprod{x}{\cdot}_{X,X\dual}$. Then, by assumption, for every fixed $\phi\in X\dual$ $(\canEm x_n)(\phi) \to (\canEm x_0)(\phi)$, in particular $\sup_{n\in\N} \abs{(\canEm x_n)(\phi)} < \infty$. The principle of uniform boundedness yields $\sup_{n\in\N} \norm{\canEm x_n}_{X\dual[2]} < +\infty$. Since $\norm{\canEm x}_{X\dual[2]} = \norm{x}_{X}$ for every $x\in X$, this proves the assertion.
\end{proof}

\begin{lemma}\label{th:weak-to-strong-convergent}
  Let $(x_n)_{n\in\N}$ be a weak convergent sequence in a Hilbert space $H$ with limit $x$. Then there exists a subsequence $(x_{n(k)})_{k\in\N}$ such that
  \[
    \norm[\bigg]{\frac{1}{N}\sum_{k=1}^{N} x_{n(k)} - x} \to 0.
  \]
\end{lemma}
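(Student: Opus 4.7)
The plan is to prove this classical Banach--Saks type result by extracting a subsequence $(x_{n(k)})_{k\in\N}$ that is approximately orthogonal to its earlier terms, and then expand the Hilbert space norm of the Cesàro average into a sum of inner products that can be controlled termwise.

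First I would reduce to the case $x = 0$ by replacing $x_n$ with $x_n - x$, which preserves weak convergence and Cesàro behaviour. Next, by \Cref{th:weak-convergent-bounded} the sequence $(x_n)_{n\in\N}$ is bounded, so there is some $M > 0$ with $\norm{x_n}_{H} \le M$ for all $n$. I would then construct the subsequence $(x_{n(k)})_{k\in\N}$ inductively: set $n(1) = 1$, and having chosen $n(1) < \dots < n(k)$, use the fact that for each fixed $j \le k$ the functional $y \mapsto \scprod{y}{x_{n(j)}}_{H}$ is continuous and hence $\scprod{x_n}{x_{n(j)}}_{H} \to 0$ as $n \to \infty$ (weak convergence to $0$), to pick $n(k+1) > n(k)$ so large that
\begin{equation*}
  \abs{\scprod{x_{n(k+1)}}{x_{n(j)}}_{H}} \le \frac{1}{k} \quad \text{for all } j = 1, \ldots, k.
\end{equation*}

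With this subsequence in hand, I would expand
\begin{equation*}
  \norm[\bigg]{\frac{1}{N} \sum_{k=1}^{N} x_{n(k)}}_{H}^{2}
  = \frac{1}{N^{2}} \sum_{k=1}^{N} \norm{x_{n(k)}}_{H}^{2}
  + \frac{2}{N^{2}} \Re \sum_{1 \le j < k \le N} \scprod{x_{n(k)}}{x_{n(j)}}_{H}.
\end{equation*}
The diagonal part is bounded by $M^{2}/N$. For the off-diagonal part, the construction gives $\abs{\scprod{x_{n(k)}}{x_{n(j)}}_{H}} \le 1/(k-1)$ for $j < k$, so the double sum is dominated by $\sum_{k=2}^{N} (k-1) \cdot \frac{1}{k-1} = N - 1$, and thus contributes at most $2(N-1)/N^{2}$. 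Both terms vanish as $N \to \infty$, giving strong convergence to $0$ (and, after undoing the reduction, to $x$).

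The only mildly delicate point is getting the inductive step right so that each term $\scprod{x_{n(k)}}{x_{n(j)}}_{H}$ with $j < k$ has a summable-on-average bound; any decay rate $\varepsilon_k$ with $\sum \varepsilon_k / N \to 0$ works, and $1/k$ is the cleanest choice. Apart from this, the argument is a routine expansion of the squared norm, so I do not expect a significant obstacle.
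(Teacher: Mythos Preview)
Your proposal is correct and follows essentially the same Banach--Saks argument as the paper: reduce to $x=0$, use boundedness, inductively extract a subsequence with small cross inner products, and expand the squared norm. The only difference is a harmless index shift (you impose the bound $1/k$ at step $k+1$ rather than at step $k$), which in fact makes your off-diagonal estimate $2(N-1)/N^{2}$ slightly cleaner than the paper's $\frac{1}{N}\ln N$.
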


\begin{proof}
  We assume that $x = 0$. For the general result we just need to replace $x_n$ by $x_n - x$.

  We define the subsequence inductively: $n(1) = 1$ and for $k > 1$ we choose $n(k)$ such that
  \[
    \abs{\scprod{x_{n(k)}}{x_{n(j)}}} \leq \frac{1}{k}
    \quad \text{for all} \quad j < k.
  \]
  This is possible, because $(x_n)_{n\in\N}$ converges weakly to $0$.
  Hence, by~\Cref{th:weak-convergent-bounded}
  $\sup_{n\in\N}\norm{x_{n}} \leq C$. This yields
  \begin{align*}
    \norm[\bigg]{\frac{1}{N}\sum_{k=1}^{N} x_{n(k)}}^{2} &= \frac{1}{N^2}\sum_{k=1}^{N}\sum_{j=1}^{N}\scprod{x_{n(k)}}{x_{n(j)}} \\
    &= \frac{1}{N^{2}} \sum_{k=1}^{N} \norm{x_{n(k)}}^{2} + \frac{1}{N^{2}}\sum_{j=1}^{N} \sum_{k=j+1}^{N} 2\Re\scprod{x_{n(k)}}{x_{n(j)}} \\
    &\leq \frac{1}{N} C^{2} +  \frac{2}{N^{2}} \sum_{j=1}^{N} \sum_{k=j+1}^{N} \frac{1}{k} \leq \frac{C^{2}}{N} + \frac{1}{N}\ln(N) \to 0.
    \ifSn{\tag*{\qedhere}}{\qedhere}
  \end{align*}
\end{proof}


The next lemma is also true for general linear relations. However, since densely defined linear operators are enough for our purpose we restrict ourselves to these operators, also to use commonly known techniques.

\begin{lemma}\label{le:adjoints-for-different-dualities}
  Let $(X_{1},Y_{1})$, $(X_{1},Z_{1})$, $(X_{2},Y_{2})$ and $(X_{2},Z_{2})$ be dual pairs and $\Psi_{1}\colon Y_{1} \to Z_{1}$ and $\Psi_{2}\colon Y_{2} \to Z_{2}$ be the isomorphisms between $Y_{1}$ and $Z_{1}$, and $Y_{2}$ and $Z_{2}$, respectively. Then for a densely defined linear operator $A$ from $X_{1}$ to $X_{2}$ we have
  \begin{equation*}
    A\adjunX{Z_{2}\times Z_{1}}
    = \Psi_{1} A\adjunX{Y_{2}\times Y_{1}} \Psi_{2}^{-1}.
  \end{equation*}
\end{lemma}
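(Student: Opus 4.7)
The plan is to unpack both adjoints directly from \Cref{def:adjoint-relation} and to use the compatibility of the isomorphisms $\Psi_{1},\Psi_{2}$ with the two dual pairings on each space. Since $(X_{i},Y_{i})$ and $(X_{i},Z_{i})$ are both dual pairs, both $Y_{i}$ and $Z_{i}$ represent the anti-dual of $X_{i}$, and the canonical identification between them is precisely $\Psi_{i}$. Concretely, I would first record (or assume) the defining compatibility
\begin{equation*}
  \dualprod{\Psi_{i} y_{i}}{x_{i}}_{Z_{i},X_{i}} = \dualprod{y_{i}}{x_{i}}_{Y_{i},X_{i}}\quad\text{for all } x_{i}\in X_{i},\ y_{i}\in Y_{i},\ i=1,2,
\end{equation*}
which characterizes $\Psi_{i}$ uniquely.

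Next, I would show the equality of the two relations by a direct chain of equivalences. Given $\begin{bsmallmatrix} z_{2}\\ z_{1}\end{bsmallmatrix}\in Z_{2}\times Z_{1}$, set $y_{i}=\Psi_{i}^{-1}z_{i}$. Then by \Cref{def:adjoint-relation} together with the compatibility above,
\begin{align*}
  \begin{bmatrix} z_{2}\\ z_{1}\end{bmatrix}\in A\adjunX{Z_{2}\times Z_{1}}
  &\iff \dualprod{z_{2}}{Ax_{1}}_{Z_{2},X_{2}} = \dualprod{z_{1}}{x_{1}}_{Z_{1},X_{1}} \quad\forall x_{1}\in\dom A \\
  &\iff \dualprod{\Psi_{2}^{-1}z_{2}}{Ax_{1}}_{Y_{2},X_{2}} = \dualprod{\Psi_{1}^{-1}z_{1}}{x_{1}}_{Y_{1},X_{1}} \quad\forall x_{1}\in\dom A \\
  &\iff \begin{bmatrix} \Psi_{2}^{-1}z_{2}\\ \Psi_{1}^{-1}z_{1}\end{bmatrix}\in A\adjunX{Y_{2}\times Y_{1}}.
\end{align*}
The last condition says exactly that $\Psi_{1}^{-1}z_{1} = A\adjunX{Y_{2}\times Y_{1}}\Psi_{2}^{-1}z_{2}$ (where we use that $A\adjunX{Y_{2}\times Y_{1}}$ is single-valued since $A$ is densely defined, so its adjoint is an operator), i.e.\ $z_{1}=\Psi_{1}A\adjunX{Y_{2}\times Y_{1}}\Psi_{2}^{-1}z_{2}$. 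Hence the two operators have the same graph.

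No genuine obstacle appears: the statement is essentially a bookkeeping exercise expressing that the adjoint construction is natural with respect to changes of the dual pairing. The only point requiring a little care is the handling of domains, namely that $z_{2}\in\dom A\adjunX{Z_{2}\times Z_{1}}$ iff $\Psi_{2}^{-1}z_{2}\in\dom A\adjunX{Y_{2}\times Y_{1}}$, which is automatic from the equivalences above; and the use of single-valuedness guaranteed by the density of $\dom A$. Once this is noted, the identity $A\adjunX{Z_{2}\times Z_{1}} = \Psi_{1} A\adjunX{Y_{2}\times Y_{1}} \Psi_{2}^{-1}$ follows immediately.
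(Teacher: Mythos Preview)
Your proof is correct and follows essentially the same idea as the paper's: both rely on the compatibility $\dualprod{\Psi_{i} y_{i}}{x_{i}}_{Z_{i},X_{i}} = \dualprod{y_{i}}{x_{i}}_{Y_{i},X_{i}}$ to translate the defining adjoint identity from one dual pair to the other. The only cosmetic difference is that the paper verifies one inclusion $\Psi_{1} A\adjunX{Y_{2}\times Y_{1}} \Psi_{2}^{-1} \subseteq A\adjunX{Z_{2}\times Z_{1}}$ by a direct computation and then invokes symmetry for the reverse inclusion, whereas you package both directions into a single chain of equivalences on the graphs.
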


\begin{figure}[ht]
  \centering
  \colorlet{mycolor}{white!60!black}
  \colorlet{mycolorii}{blue}
  \colorlet{mycoloriii}{green!40!black}
  \begin{tikzcd}[column sep=5em,row sep=2em]
    Z_{1}\ar[mycolorii,bend right=45,outer sep=-1pt,<-]{dd}{\Psi_{1}} & \ar[mycoloriii,swap]{l}{A\adjunX{Z_{2} \times Z_{1}}}Z_{2}\ar[bend left=45,outer sep=-1pt,swap,<-]{dd}{\Psi_{2}} \\
    \textcolor{mycolor}{X_{1}}\ar[latex-latex,dotted]{u}{} \ar[latex-latex,dotted]{d}{} \ar[mycolor]{r}{A} & \textcolor{mycolor}{X_{2}}\ar[latex-latex,dotted]{u}{} \ar[latex-latex,dotted]{d} \\
    Y_{1}\ar[bend left=45,shift left=5pt,<-]{uu}{\Psi_{1}^{-1}} & \ar[mycolorii,swap]{l}{A\adjunX{Y_{2} \times Y_{1}}}Y_{2} \ar[mycolorii,bend right=45,shift right=5pt,swap,<-]{uu}{\Psi_{2}^{-1}}
  \end{tikzcd}
  \caption{\label{fig:adjoints-for-different-dual-paris}$\textcolor{mycoloriii}{A\adjunX{Z_{2}\times Z_{1}}} = \textcolor{mycolorii}{\Psi_{1} A\adjunX{Y_{2}\times Y_{1}} \Psi_{2}^{-1}}$}
\end{figure}

\begin{proof}
  Let $z_{2} \in Z_{2}$ be such that $\Psi_{2}^{-1}z \in \dom A\adjunX{Y_{2} \times Y_{1}}$. Then
  \begin{align*}
    \dualprod{Ax_{1}}{z_{2}}_{X_{2},Z_{2}}
    &= \dualprod{Ax_{1}}{\Psi_{2}^{-1}z_{2}}_{X_{2},Y_{2}}
      = \dualprod{Ax_{1}}{\Psi_{2}^{-1} z_{2}}_{X_{2},Y_{2}}
      \ifBirk{\\ &}{}
                   = \dualprod{x_{1}}{A\adjunX{Y_{2}\times Y_{1}} \Psi_{2}^{-1} z_{2}}_{X_{1},Y_{1}} \\
    &= \dualprod{x_{1}}{\Psi_{1} A\adjunX{Y_{2}\times Y_{1}} \Psi_{2}^{-1} z_{2}}_{X_{1},Z_{1}}.
  \end{align*}
  This implies $\Psi_{1} A\adjunX{Y_{2}\times Y_{1}} \Psi_{2}^{-1} \subseteq A\adjunX{Z_{2} \times Z_{1}}$. The same steps with $Z_{2}$ and $Z_{1}$ replaced with $Y_{2}$ and $Y_{1}$ yield the reversed inclusion.
\end{proof}


The following theorem can be found in~\cite[Th.~2 p.~200]{fana-yosida}, we just changed that the operator maps into a different space, which does not change the proof.

\begin{theorem}[J. von Neumann]\label{th:TTadjun-self-adjoint}
Let $T$ be a closed linear operator from the Hilbert space $X$ to the Hilbert space $Y$. Then $T\adjun T$ and $TT\adjun$ are self-adjoint, and $(\opid_{X} + T\adjun T)$ and $(\opid_{Y} + TT\adjun)$ are boundedly invertible.
\end{theorem}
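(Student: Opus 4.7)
The strategy is the classical von Neumann graph-orthogonality argument. I regard $X\times Y$ as a Hilbert space with the sum inner product, identify $T$ with its (closed) graph, and introduce the unitary swap
\begin{equation*}
  V\colon X\times Y \to Y\times X,\qquad V\begin{bmatrix}x\\y\end{bmatrix} = \begin{bmatrix}-y\\x\end{bmatrix}.
\end{equation*}
Unwinding \Cref{def:adjoint-relation} shows that $T\adjun = V(T)^{\perp}$ inside $Y\times X$. Because $T$ is closed, $V(T)$ is closed, so I obtain the orthogonal decomposition $Y\times X = V(T) \oplus T\adjun$.

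Next I exploit this decomposition to show that $\opid_{Y} + TT\adjun$ is bijective. Given $y\in Y$, decomposing $\begin{bsmallmatrix}y\\0\end{bsmallmatrix}$ produces $x\in\dom T$ and $y_{0}\in\dom T\adjun$ with $y = -Tx + y_{0}$ and $0 = x + T\adjun y_{0}$. Substituting $x = -T\adjun y_{0}$ yields $y_{0}\in\dom TT\adjun$ and $y = (\opid_{Y} + TT\adjun)y_{0}$, which gives surjectivity. Injectivity together with a norm bound follows from the identity
\begin{equation*}
  \scprod{(\opid_{Y} + TT\adjun)y_{0}}{y_{0}}_{Y} = \norm{y_{0}}_{Y}^{2} + \norm{T\adjun y_{0}}_{X}^{2},
\end{equation*}
which forces $\norm{(\opid_{Y} + TT\adjun)y_{0}}_{Y} \geq \norm{y_{0}}_{Y}$. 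Hence $B \coloneqq (\opid_{Y} + TT\adjun)^{-1}$ is an everywhere defined bounded contraction on $Y$.

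The self-adjointness of $TT\adjun$ then follows by verifying that $B$ itself is self-adjoint. Writing $u_{i} = By_{i}\in\dom TT\adjun$, the identity
\begin{equation*}
  \scprod{By_{1}}{y_{2}}_{Y} = \scprod{u_{1}}{(\opid_{Y} + TT\adjun)u_{2}}_{Y} = \scprod{u_{1}}{u_{2}}_{Y} + \scprod{T\adjun u_{1}}{T\adjun u_{2}}_{X}
\end{equation*}
is symmetric in $(y_{1},y_{2})$, so the bounded, everywhere defined operator $B$ is symmetric and therefore self-adjoint. Since $B$ is injective, its inverse $\opid_{Y} + TT\adjun$ is a self-adjoint unbounded operator, and hence so is $TT\adjun$. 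Density of $\dom TT\adjun = \ran B$ in $Y$ is automatic, as $(\ran B)^{\perp} = \ker B\adjun = \ker B = \set{0}$. Swapping the roles of $T$ and $T\adjun$ (using that $T\adjun$ is closed and $T\adjun\adjun = T$ for densely defined $T$) yields the analogous statement for $T\adjun T$ and $\opid_{X} + T\adjun T$.

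The main point of concern is largely bookkeeping: one needs $T$ densely defined for $T\adjun$ to be single-valued, which is the standing convention. Granted that, the proof is essentially direct; the orthogonal decomposition $Y\times X = V(T) \oplus T\adjun$ carries all the weight, and the symmetry of $B$ is almost tautological.
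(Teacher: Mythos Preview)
Your proof is correct and follows essentially the same von Neumann graph-orthogonality argument as the paper: both exploit the orthogonal decomposition of the product Hilbert space coming from the closed graph of $T$ and the graph of $T\adjun$, produce the inverse of $\opid + TT\adjun$ (respectively $\opid + T\adjun T$) from it, and then verify that this bounded inverse is symmetric. The only cosmetic differences are that you start with $TT\adjun$ and decompose in $Y\times X$ whereas the paper starts with $T\adjun T$ in $X\times Y$, and you argue injectivity via the norm lower bound rather than via uniqueness of the orthogonal decomposition.
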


Note that here the adjoint $T\adjun$ is calculated with respect to the ``natural'' dual pairs $(X,X)$ and $(Y,Y)$, i.e., $T\adjun = T\adjunX{Y\times X}$.

\begin{proof}
  Since
  $T\adjun =
  \begin{bsmallmatrix}
    0 & \opid_{Y} \\
    -\opid_{X} & 0
  \end{bsmallmatrix} T^{\perp}$,
  we have
  $T \oplus
  \begin{bsmallmatrix}
    0 & -\opid_{X} \\
    \opid_{Y} & 0
  \end{bsmallmatrix}
  T\adjun = X\times Y$.
  Hence, for
  $
  \begin{bsmallmatrix}
    h \\ 0
  \end{bsmallmatrix}
  \in X\times Y$ there are unique $x \in \dom T$ and $y \in \dom T\adjun$ such that
  \begin{equation}\label{eq:decomposition}
    \begin{bmatrix}
      h \\ 0
    \end{bmatrix}
    =
    \begin{bmatrix}
      x \\ Tx
    \end{bmatrix}
    +
    \begin{bmatrix}
      -T\adjun y \\ y
    \end{bmatrix}.
  \end{equation}
  Consequently, $h=x - T\adjun y$ and $y = -Tx$, which implies $x \in \dom T\adjun T$ and
  \begin{align*}
    h = x + T\adjun T x.
  \end{align*}
  Because of the uniqueness of the decomposition in~\eqref{eq:decomposition}, $x \in \dom T\adjun T$ is uniquely determined by $h \in X$. Therefore, $(\opid_{X} + T\adjun T)^{-1}$ is a well-defined and everywhere defined operator.

  For $h_{1},h_{2} \in X$, we define $x_{1} \coloneqq (\opid_{X} + T\adjun T)^{-1} h_{1}$ and $x_{2} \coloneqq (\opid_{X} + T\adjun T)^{-1} h_{2}$. Then $x_{1}, x_{2} \in \dom T\adjun T$ and, by the closedness  of $T$, $T\adjun[2] = T$. Hence,
  \begin{align*}
    \scprod{h_{1}}{ (\opid_{X} + T\adjun T)^{-1} h_{2}}
    &= \scprod{(\opid_{X} + T\adjun T) x_{1}}{ x_{2}} = \scprod{x_{1}}{x_{2}} + \scprod{T\adjun T x_{1}}{ x_{2}} \\
    &= \scprod{x_{1}}{x_{2}} + \scprod{T x_{1}}{T x_{2}}= \scprod{x_{1}}{x_{2}} + \scprod{x_{1}}{ T\adjun T x_{2}} \\
    &= \scprod{x_{1}}{ (\opid_{X} + T\adjun T) x_{2}} = \scprod{(\opid_{X} + T\adjun T)^{-1} h_{1}}{ h_{2}},
  \end{align*}
  which yields that $(\opid_{X} + T\adjun T)^{-1}$ is self-adjoint. Therefore $(\opid_{X} + T\adjun T)$ and $T\adjun T$ are also self-adjoint. Moreover, $(\opid_{X} + T\adjun T)^{-1}$ is bounded as a closed and everywhere defined operator.

  By $TT\adjun = (T\adjun)\adjun(T\adjun)$ the other statements follow by the already shown.
\end{proof}

\begin{lemma}
  Let $T$ be the operator from the previous theorem. Then $\dom T\adjun T$ is a core of $T$.
\end{lemma}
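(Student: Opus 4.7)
The plan is to exploit the surjectivity of $\opid_X + T\adjun T$ established in the previous theorem (von Neumann) and argue via the graph inner product of $T$.

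Since $T$ is closed, $\dom T$ endowed with the graph inner product
\[
  \scprod{x}{y}_{T} \coloneqq \scprod{x}{y}_{X} + \scprod{Tx}{Ty}_{Y}
\]
is a Hilbert space. By definition, $\dom T\adjun T$ is a core of $T$ iff it is dense in $(\dom T, \scprod{\cdot}{\cdot}_{T})$. So the task reduces to showing that the orthogonal complement of $\dom T\adjun T$ in this Hilbert space is trivial.

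Suppose $y \in \dom T$ satisfies $\scprod{x}{y}_{T} = 0$ for all $x \in \dom T\adjun T$. For such $x$ we have $Tx \in \dom T\adjun$, so the adjoint relation yields $\scprod{Tx}{Ty}_{Y} = \scprod{T\adjun T x}{y}_{X}$, and therefore
\[
  \scprod{(\opid_{X} + T\adjun T)x}{y}_{X} = 0 \quad \text{for all } x \in \dom T\adjun T.
\]
The previous theorem (von Neumann) tells us that $\opid_{X} + T\adjun T$ maps $\dom T\adjun T$ onto all of $X$. Consequently $y \perp X$, i.e. $y = 0$.

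Hence $\dom T\adjun T$ is dense in $\dom T$ with respect to the graph norm, which is exactly the statement that it is a core of $T$. I do not expect any real obstacle here: the one subtlety worth double-checking is that the computation $\scprod{Tx}{Ty}_{Y} = \scprod{T\adjun Tx}{y}_{X}$ genuinely requires only $y \in \dom T$ (which we have) and $Tx \in \dom T\adjun$ (which is precisely what $x \in \dom T\adjun T$ guarantees), so the argument is clean.
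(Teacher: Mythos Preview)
Your proof is correct and follows essentially the same approach as the paper: both reduce to showing that the graph-orthogonal complement of $\dom T\adjun T$ in $\dom T$ is trivial, then use the surjectivity of $\opid_{X} + T\adjun T$ from von Neumann's theorem to conclude. The only cosmetic difference is that the paper phrases it as a proof by contradiction and swaps the roles of $x$ and $y$.
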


\begin{proof}
  Note that $\dom T\adjun T$ is a core of $T$ is equivalent that to $\dom T\adjun T$ is dense in $\dom T$ with respect to the graph norm. Hence, it is sufficient to show that the orthogonal complement of $\dom T\adjun T$ is $\sset{0}$ w.r.t.\ the graph inner product.
  Suppose $\dom T\adjun T$ is not a core, then there exists an $x \in \dom T\setminus\sset{0}$ such that
  \begin{equation*}
    0 = \scprod{x}{y}_{T} = \scprod{x}{y}_{X} + \scprod{Tx}{Ty}_{Y} = \scprod{x}{y + T\adjun T y}_{X}\quad\text{for all}\quad y \in \dom T\adjun T.
  \end{equation*}
  By \Cref{th:TTadjun-self-adjoint} $(\opid + T\adjun T)y$ is surjective, which implies $x=0$ and contradicts the assumption.
\end{proof}

In the next proposition we will look at the situation where we deal with Hilbert spaces, but work with another dual pair. We will denote the adjoint with respect to the canonical Hilbert space dual pair by $\hadjunsymb$ and the adjoint with respect to the other dual pair by $\dadjunsymb$.

\begin{proposition}\label{th:TadjunT-selfadjoint-for-dual-pair}
  Let $X$, $H$ be Hilbert spaces, $(X,Y)$ be a complete dual pair and $T\colon \dom T \subseteq X \to H$ be a densely defined and closed linear operator. Then $T\dadjun T\colon \dom T\dadjun T \subseteq X \to Y$ is self-adjoint, i.e. $(T\dadjun T)\dadjun = T\dadjun T$.
  Moreover, $\dom T\dadjun T$ is a core of $T$.
\end{proposition}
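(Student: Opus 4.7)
The plan is to reduce the self-adjointness assertion to the classical Hilbert-space version, \Cref{th:TTadjun-self-adjoint}, by using the Riesz-type isomorphism that comes with a complete dual pair. Since $(X,Y)$ is a complete dual pair and $X$ is a Hilbert space, there is an isomorphism $\Psi \colon Y \to X$ with $\dualprod{y}{x}_{Y,X} = \scprod{\Psi y}{x}_{X}$ for all $x \in X$, $y \in Y$. Applying \Cref{le:adjoints-for-different-dualities} to $T \colon \dom T \subseteq X \to H$, changing only the pairing on the $X$-side from $(X,Y)$ to $(X,X)$ (and keeping $(H,H)$ fixed), yields $T\hadjun = \Psi T\dadjun$, hence $T\dadjun = \Psi^{-1} T\hadjun$ and in particular $\dom T\dadjun = \dom T\hadjun$. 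Therefore $T\dadjun T = \Psi^{-1}(T\hadjun T)$ with $\dom T\dadjun T = \dom T\hadjun T$.

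With this identification, the core statement is immediate: by \Cref{th:TTadjun-self-adjoint} the operator $T\hadjun T$ is self-adjoint on $X$, and by the lemma directly following it in the appendix its domain is a core of $T$; since the two domains coincide, $\dom T\dadjun T$ is a core of $T$ as well.

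For the identity $(T\dadjun T)\dadjun = T\dadjun T$, the plan is to apply \Cref{le:adjoints-for-different-dualities} a second time, now to $A := T\dadjun T \colon X \to Y$, transporting both dualities to Hilbert pairings: on the domain from $(X,Y)$ to $(X,X)$ via $\Psi_1 := \Psi$, and on the target from $(Y,X)$ to $(Y,Y)$ via $\Psi_2 := \Psi^{-1}$, where $Y$ is equipped with the Hilbert structure $\scprod{y_1}{y_2}_{Y} := \scprod{\Psi y_1}{\Psi y_2}_{X}$. The lemma then delivers $A\hadjun = \Psi \, A\dadjun \, \Psi$, where $A\hadjun$ is the Hilbert adjoint of $A$ regarded as an operator between the Hilbert spaces $X$ and $Y$. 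A short direct computation using $A = \Psi^{-1}(T\hadjun T)$ together with the Hilbert self-adjointness of $T\hadjun T$ from \Cref{th:TTadjun-self-adjoint} identifies $A\hadjun = (T\hadjun T)\,\Psi$. Substituting into $\Psi \, A\dadjun \, \Psi = (T\hadjun T)\,\Psi$ and cancelling $\Psi$ on the right gives $A\dadjun = \Psi^{-1}(T\hadjun T) = A$, as required.

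The only genuine obstacle is the bookkeeping of four different dual pairs and the two associated isomorphisms; no new analytic ingredient is needed beyond \Cref{le:adjoints-for-different-dualities}, \Cref{th:TTadjun-self-adjoint}, and the core-lemma that follows it. A more hands-on alternative to the second application of \Cref{le:adjoints-for-different-dualities} is to unroll the definition of $(T\dadjun T)\dadjun$ directly, converting the defining sesquilinear identity through $\dualprod{y}{x}_{Y,X} = \scprod{\Psi y}{x}_{X}$ into the one characterising $(T\hadjun T)\hadjun$, and then invoking the self-adjointness of $T\hadjun T$ to conclude $A\dadjun = A$.
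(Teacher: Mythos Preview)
Your proof is correct and follows essentially the same route as the paper: reduce to von Neumann's theorem (\Cref{th:TTadjun-self-adjoint}) by conjugating with the Riesz-type isomorphism $\Psi$ between $Y$ and $X$, obtaining $T\dadjun T = \Psi^{-1}(T\hadjun T)$ (the paper writes it as $\Psi T\hadjun T$ with $\Psi$ pointing the other way), and then transport the self-adjointness of $T\hadjun T$ back; the core statement follows from $\dom T\dadjun T = \dom T\hadjun T$. The only cosmetic difference is that the paper takes what you call the ``hands-on alternative'' --- it unrolls the defining identity for $(T\dadjun T)\dadjun$ directly rather than invoking \Cref{le:adjoints-for-different-dualities} a second time with an auxiliary Hilbert structure on $Y$.
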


\begin{proof}
  For $x,y \in \dom T\dadjun T$ we have
  \begin{align*}
    \dualprod{T\dadjun Tx}{y}_{Y,X} = \scprod{Tx}{Ty}_{H} = \dualprod{x}{T\dadjun Ty}_{X,Y},
  \end{align*}
  which leads to $T\dadjun T \subseteq (T\dadjun T)\dadjun$.

  By \Cref{th:TTadjun-self-adjoint} we already know that $T\hadjun T$ is self-adjoint. Let $\Psi\colon X \to Y$ the duality mapping, i.e.,
  \(
  \dualprod{\Psi x}{y}_{Y,X} = \scprod{x}{y}_{X}
  \)
  for $x,y \in X$.
  Then $T\dadjun = \Psi T\hadjun$ (by \Cref{le:adjoints-for-different-dualities}) and therefore $T\dadjun T = \Psi T\hadjun T$. Now for $x \in \dom (T\dadjun T)\dadjun$ and $y \in \dom T\dadjun T = \dom T\hadjun T$ we have
  \begin{align*}
    \scprod{\Psi^{-1}(T\dadjun T)\dadjun x}{y}_{X}
    &= \dualprod{(T\dadjun T)\dadjun x}{y}_{Y,X}
    = \dualprod{x}{T\dadjun Ty}_{X,Y}
    = \scprod{x}{\underbrace{\Psi^{-1} T\dadjun}_{=\mathrlap{T\hadjun}} Ty}_{X} \\
    &= \scprod{x}{T\hadjun Ty}_{X}.
  \end{align*}
  This implies $\Psi^{-1}(T\dadjun T)\dadjun \subseteq (T\hadjun T)\hadjun = T\hadjun T$ and applying $\Psi$ on both sides gives $(T\dadjun T)\dadjun \subseteq  \Psi T\hadjun T = T\dadjun T$.

  The last assertion follows from $\dom T\dadjun T = \dom T\hadjun T$ and $\dom T\hadjun T$ is a core of $T$,
\end{proof}



\end{document}